\newcommand{\pa}{\partial}
\renewcommand{\div}{{\rm div}}
\newcommand{\curl}{{\rm curl}}
\newcommand{\norm}[1]{\|#1\|}
\newtheorem{thm}{Theorem}[section]
\newtheorem{lem}[thm]{Lemma}
\newtheorem{pro}[thm]{Proposition}
\newtheorem{rem}[thm]{Remark}
\numberwithin{equation}{section}
\begin{document}

\title[Nishida-Smoller type large solutions for CNS equations in exterior domains]
{Nishida-Smoller type large solutions for the compressible Navier-Stokes equations with slip boundary conditions in 3D exterior domains}

\author[M. Xie]{Minghong Xie}
\address[Minghong Xie]{School of Mathematics and Statistics, Guangxi Normal University, Guilin, Guangxi 541004, P.R. China.}
\email{xiemh0622@hotmail.com}
\author[S. Xu]{Saiguo Xu}
\address[Saiguo Xu]{School of Mathematics and Statistics, Guangxi Normal University, Guilin, Guangxi 541004, P.R. China.}
\email{xsgsxx@126.com}
\author[Y.H. Zhang]{Yinghui Zhang*}
\address[Yinghui Zhang] {School of Mathematics and Statistics, Guangxi Normal University, Guilin, Guangxi 541004, P.R.
China} \email{yinghuizhang@mailbox.gxnu.edu.cn}

\thanks{* Corresponding author}

\thanks{This work was supported by Guangxi Natural Science Foundation $\#$2024GXNSFDA010071, $\#$2024JJB110082, National Natural Science
Foundation of China $\#$12271114,  Project for Enhancing the Basic Research Capacity of Young and Middle-aged Teachers in Colleges of Guangxi $\#$2025KY0102, Center for Applied Mathematics of Guangxi (Guangxi Normal University) and the Key Laboratory of Mathematical Model and Application (Guangxi Normal University), Education Department of Guangxi Zhuang Autonomous Region.}

\date{\today}

\begin{abstract}
This paper investigates the global existence of classical solutions to the isentropic compressible Navier-Stokes equations with slip boundary condition in a three-dimensional (3D) exterior domain. It is shown that
the classical solutions with large initial energy and vacuum  exist
globally in time when the adiabatic exponent $\gamma>1$ is sufficiently close to 1
(near-isothermal regime).
This constitutes an extension of the
celebrated result for the one-dimensional Cauchy problem of the isentropic Euler equations
that has been established in 1973 by Nishida and Smoller (Comm. Pure Appl. Math. 26
(1973), 183-200).
To the best of our knowledge, we establish the first result on the global existence of large-energy solutions with vacuum to  the  compressible
Navier-Stokes equations with slip boundary condition in a 3D exterior domain, which improves
previous related works where either small initial energy is
required or boundary effects are ignored.
\end{abstract}

\maketitle
{\small
\keywords {\noindent {\bf Keywords:} {Compressible Navier-Stokes equations; global existence; large initial energy; slip boundary conditions; exterior domain; vacuum.}
\smallskip
\newline
\subjclass{\noindent {\bf 2020 Mathematics Subject Classification:} 76N06; 76N10; 35M13; 35K65}}
}

\section{Introduction}
The motion of a general viscous compressible isentropic fluid in a three-dimensional exterior domain $\Omega\subset\mathbb{R}^3$ is governed by the compressible Navier-Stokes equations:
\begin{equation}\label{Large-CNS-eq}
  \begin{cases}
    \partial_t\rho+\operatorname{div}(\rho u)=0, \\
    \partial_t(\rho u)+\operatorname{div}(\rho u\otimes u)-\mu\Delta u-(\mu+\lambda)\nabla\operatorname{div} u+\nabla P(\rho)=0,
  \end{cases}
\end{equation}
where $\rho$, $u$, and $P(\rho)=a\rho^{\gamma}$ ($a>0$) represent the fluid density, velocity, and pressure respectively. The adiabatic exponent satisfies $\gamma>1$, while the viscosity coefficients $\mu$ and $\lambda$ adhere to the physical constraints:
\begin{equation}\label{viscosity-condition}
  \mu>0,\quad \lambda+\frac{2}{3}\mu\geq0.
\end{equation}
We consider the system \eqref{Large-CNS-eq} in an exterior domain $\Omega=\mathbb{R}^3\setminus D$, where $D$ is a simply connected bounded domain with smooth boundary $\partial D$. The equations are supplemented with initial data
\begin{equation}\label{initial-data}
  (\rho, u)(x,0)=(\rho_0, u_0)(x), \quad x\in\Omega,
\end{equation}
Navier-slip boundary conditions
\begin{equation}\label{Navier-slip-condition}
    u\cdot n=0,\, \operatorname{curl} u\times n=-An \text{ on } \partial\Omega,
\end{equation}
and far-field behavior
\begin{equation}\label{far-field-condition}
    (\rho,u)(x,t)\rightarrow(\rho_{\infty},0) \text{ as } |x|\rightarrow\infty,
\end{equation}
where $n$ denotes the unit outer normal to $\partial\Omega$, and $A=A(x)$ is a $3\times3$ symmetric matrix defined on $\pa\Omega$. There exist some different forms of slip boundary conditions related to \eqref{Navier-slip-condition}, where the detailed discussions can be found in \cite{Cai-Li2023}.

\subsection*{Previous work}
The well-posedness theory for compressible Navier-Stokes equations has been extensively studied under various geometric configurations:

\textbf{Whole space and periodic domains:} The one-dimensional theory is relatively complete \cite{Hoff1987,Kazhikhov1977,Serre1986-1,Serre1986-2,Kazhikhov1982}. In higher dimensions, Nash \cite{Nash1962} and Serrin \cite{Serrin1959} established local well-posedness for smooth initial data without vacuum. For initial data containing vacuum, local strong solutions were investigated in \cite{Cho2004,Cho2006-1,Cho2006-2,Choe2003,Salvi1993}. Global existence results for small perturbations of equilibrium were achieved by Matsumura-Nishida \cite{Matsumura1980} and extended to discontinuous data by Hoff \cite{Hoff1991,Hoff1995}. Breakthroughs for large data came with Lions' \cite{Lions1998} and Feireisl's \cite{Feireisl2001} weak solutions for $\gamma > \frac{9}{5}$ and $\gamma > \frac{3}{2}$ respectively. Huang-Li-Xin \cite{Huang-Li2012} later established global classical solutions with small energy but possibly large oscillations.

\textbf{Bounded domains and half space:} For Dirichlet boundary conditions, Lions-Feireisl's weak solutions theory extends naturally.
For the general bounded smooth domain,
the global existence of strong (or classical) solutions has been established by \cite{Cai-Li2023} for the 3D case with small initial energy, and \cite{Fan-Li-Li2022} for the 2D case with large initial energy, both of which are equipped with slip boundary conditions. For the 3D bounded domain with non-slip boundary condition, Fan and Li \cite{Fan-Li-arX2021} proved the global existence of classical solutions to the barotropic compressible Navier-Stokes system with small initial energy.
 For slip boundary conditions in half space $\mathbb{R}^3_+$, Hoff \cite{Hoff2005} proved the global existence of weak solutions with small initial energy.

\textbf{Exterior domains:} Novotn\'{y}-Str\v{a}skraba \cite{Novotny2004} established weak solutions in general domains, while Cai-Li-L\"{u} \cite{Cai-Li-Lv2021} proved global classical solutions with small initial energy, analogous to Huang-Li-Xin's whole space results.

\subsection*{Uniqueness and regularity challenges}
Despite these advances, fundamental questions remain open. The uniqueness and regularity of Lions-Feireisl weak solutions with arbitrary data are still unresolved. Recent progress focuses on special configurations: Jiang-Zhang \cite{Jiang2001,Jiang2003} obtained global weak solutions for symmetric flows, while Hoff \cite{Hoff2005} constructed special weak solutions with extra regularity. Recently, Hong-Hou-Peng-Zhu \cite{Zhu2024} established Nishida-Smoller type large solutions in whole space when $\gamma$ is near 1, allowing both large initial energy and vacuum. Very recently, the authors \cite{Xu-Zhang2025} proved the global well-posedness and
large time behavior of Nishida-Smoller type large solutions to compressible Navier-Stokes equations \eqref{Large-CNS-eq} with vacuum and slip boundary conditions \eqref{Navier-slip-condition} in a 3D bounded domain, which generalizes the results of \cite{Zhu2024} and \cite{Cai-Li2023}.
This type solution can be viewed as the Nishida-Smoller type large solution which is originally studied for the conservation laws with BV initial data in \cite{Nishida-Smoller1973}, where Nishida and Smoller showed the global existence of solutions to the Cauchy problem of 1D isentropic Euler equations under the condition that $(\gamma-1).\text{total var.}\{u_0,\rho_0\}$ is sufficiently small. In particular, this result implies that the initial energy could be large as $\gamma$ is sufficiently close to 1. For some generalizations of the Nishida-Smoller type results on inviscid or viscous flow, one can see for instance \cite{Kawashima-Nishida1981,Liu-Yang-Zhao2014,Tan-Yang-Zhao2013,Liu1977,Temple1981,Zhu2017}.

\subsection*{Main motivation}
To conclude, all the works \cite{Cai-Li-Lv2021, Cai-Li2023, Fan-Li-arX2021, Huang-Li2012, Zhu2024, Xu-Zhang2025} depend essentially on small initial energy or
the advantages of the whole space and bounded domain. Therefore, a natural and important problem is to study what will
happen if both large initial energy and exterior domains are involved. More precisely, we prove that when the adiabatic exponent $\gamma$ is sufficiently close to 1, the compressible Navier-Stokes equations \eqref{Large-CNS-eq} in exterior domains admit global classical solutions with large initial energy and vacuum.
Our approach combines energy methods adapted to exterior domains with careful analysis of the adiabatic exponent's role in pressure regularization. The Navier-slip boundary conditions needs delicate energy estimates to control far-field behavior while maintaining compatibility with boundary conditions.

\bigskip

Before stating our result, let us introduce the following notations and conventions used throughout this paper. We set
\begin{equation*}
  \int f=\int_{\Omega}fdx,\quad\int_0^Tg=\int_0^Tgdt,
\end{equation*}
and a ball $B_R$ as
\begin{equation*}
  B_R=\{x\in\mathbb{R}^3||x|<R\}.
\end{equation*}

For $1\leq r\leq\infty$, and integer $k\geq1$, we denote the standard Sobolev spaces as follows:
\begin{equation}\label{Notation-Sobolev}
  \begin{cases}
     L^r=L^r(\Omega),\,D^{k,r}=\{u\in L_{loc}^1(\Omega): \norm{\nabla^ku}_{L^r}<\infty\},\\
     W^{k,r}=L^r\cap D^{k,r},\,H^k=W^{k,2},\, D^k=D^{k,2},\\
     D_0^1=\{u\in L^6: \norm{\nabla u}_{L^2}<\infty,\textrm{ and \eqref{Navier-slip-condition} holds}\},\\
    H_0^1=L^2\cap D_0^1,\,\norm{u}_{D^{k,r}}=\norm{\nabla^ku}_{L^r}.
  \end{cases}
\end{equation}
For some $s\in(0,1)$, the fractional Sobolev space $H^s(\Omega)$ is defined by
\begin{equation*}
  H^s(\Omega):=\left\{u\in L^2(\Omega): \int_{\Omega\times\Omega}\frac{|u(x)-u(y)|^2}{|x-y|^{n+2s}}dxdy<\infty\right\}
\end{equation*}
with the norm:
\begin{equation*}
  \norm{u}_{H^s(\Omega)}:=\norm{u}_{L^2(\Omega)}+\left(\int_{\Omega\times\Omega}\frac{|u(x)-u(y)|^2}{|x-y|^{n+2s}}dxdy\right)^{\frac{1}{2}}.
\end{equation*}
The initial total energy of \eqref{Large-CNS-eq} is defined as
\begin{equation}\label{defi-total-energy}
  E_0:=\int(\frac{1}{2}\rho_0|u_0|^2+G(\rho_0)),\quad G(\rho)=\rho\int_{\rho_{\infty}}^{\rho}\frac{P(s)-P(\rho_{\infty})}{s^2}ds,
\end{equation}
and the modified initial energy involving $\gamma-1$ is denoted as
\begin{equation}\label{defi-modified-energy}
  \mathcal{E}_0:=\int\frac{1}{2}\rho_0|u_0|^2+(\gamma-1)E_0.
\end{equation}
In what follows, we denote by $C>0$ a generic constant possibly depending on $\mu, \lambda, a, \tilde{\rho}, \Omega, M$ and the matrix $A$, but independent of $\gamma-1, E_0, \mathcal{E}_0$ and $t$. And we write $C(\alpha)$ to emphasize the dependence of $C$ on the parameter $\alpha$.

Now, we are ready to state our main results.
\begin{thm}\label{thm-global-CNS-exterior}
 Let $\Omega$ be the exterior of a simply connected bounded domain $D$ in $\mathbb{R}^3$ and its boundary $\pa\Omega$ is smooth. For given positive constants $M$ and $\tilde{\rho}\geq\rho_{\infty}+1$, suppose that the $3\times3$ symmetric matrix $A$ in \eqref{Navier-slip-condition} is smooth and positive semi-definite, and the initial data $(\rho_0,u_0)$ satisfy for some $q\in(3,6)$,
 \begin{equation}\label{initial-data1}
   (\rho_0-\rho_{\infty},P(\rho_0)-P(\rho_{\infty}))\in W^{2,q},\quad u_0\in D_0^1\cap D^2,
 \end{equation}
 \begin{equation}\label{initial-data2}
 \begin{aligned}
   0\leq\rho_0\leq\tilde{\rho},&\quad\norm{\nabla u_0}_{L^2}\leq M,\\
   \rho_0\in L^{\frac{3}{2}}&\textrm{ if }\rho_{\infty}=0,
 \end{aligned}
 \end{equation}
 and the compatibility condition
 \begin{equation}\label{compatibility-condition}
   -\mu\Delta u_0-(\mu+\lambda)\nabla\div u_0+\nabla P(\rho_0)=\rho_0^{\frac{1}{2}}g,
 \end{equation}
 for some $g\in L^2$. If $\rho_{\infty}=0$, then the initial-boundary value problem \eqref{Large-CNS-eq}-\eqref{far-field-condition} admits a unique classical solution $(\rho,u)$ in $\Omega\times(0,\infty)$ satisfying that
 \begin{equation}\label{density-bound}
   0\leq\rho(x,t)\leq2\tilde{\rho},\,(x,t)\in\Omega\times(0,\infty),
 \end{equation}
 and for any $0<\tau<T<\infty$,
 \begin{equation}\label{classical-sol}
   \begin{cases}
     (\rho,P(\rho))\in C([0,T];W^{2,q}),\\
     \nabla u\in C([0,T];H^1)\cap L^{\infty}(\tau,T;W^{2,q}),\\
     u_t\in L^{\infty}(\tau,T;H^2)\cap H^1(\tau,T;H^1),\\
     \sqrt{\rho}u_t\in L^{\infty}(0,\infty;L^2),
   \end{cases}
 \end{equation}
 provided
 \begin{equation}\label{small-condition}
   \mathcal{E}_0\leq\epsilon,
 \end{equation}
 where $\epsilon>0$ is a small constant depending on $\mu, \lambda, a, \tilde{\rho}, \Omega, M, E_0$, but independent of $\gamma-1$ and $t$ (see \eqref{small-assumption1}, \eqref{small-assumption2}, \eqref{small-assumption3} and \eqref{small-assumption4}), precisely characterized as
  \begin{equation*}
 \begin{aligned}
   \epsilon&=\min\left\{1,(4C(\tilde{\rho}))^{-12},(C(\tilde{\rho},M))^{-2},
   (4C(\tilde{\rho}))^{-2},(3C(\tilde{\rho}))^{-16},(3C(\tilde{\rho},M)(E_0+1))^{-2},(1+E_0)^{-\frac{16}{3}},\right.\\
   &\qquad\left.(4C(\tilde{\rho}))^{-\frac{128}{3}}E_0^{-\frac{56}{3}}, (4C(\tilde{\rho},M))^{-\frac{8}{5}}, (4C(\tilde{\rho})(1+E_0))^{-8},\left(\frac{\tilde{\rho}}{2C(\tilde{\rho},M)}\right)^{-\frac{32}{3}},\left(\frac{\tilde{\rho}}{4C(\tilde{\rho})(1+E_0)}\right)^8\right\}
   \end{aligned}
 \end{equation*}
 and also the matrix $A$ has certain smallness as
 \begin{equation*}
    \norm{A}_{W^{1,6}}\leq\min\left\{1,(3CE_0)^{-\frac{3}{4}}\mathcal{E}_0^{\frac{3}{32}}, E_0^{-\frac{2}{3}}\mathcal{E}_0^{\frac{7}{24}}, (4CE_0)^{-\frac{8}{9}}\mathcal{E}_0^{\frac{1}{6}}\right\},\quad\norm{A}_{W^{1,\infty}}\leq\mathcal{E}_0^{-\frac{1}{8}},
  \end{equation*}
  which can be found in \eqref{small-A-assumption1} and \eqref{small-A-assumption2} with $C$ here depending only on $\mu,\lambda$ and $\Omega$.

\end{thm}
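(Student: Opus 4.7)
The plan is to combine a standard local existence result with uniform a priori estimates via a continuation argument. First, I would invoke the local well-posedness theory for strong solutions in exterior domains with slip boundary and vacuum (Cho-Choe-Kim type, adapted to the slip condition), obtaining a unique classical solution on some $\Omega\times[0,T_0]$. Setting
\begin{equation*}
T^{*}:=\sup\bigl\{T>0:\ \sup_{0\le t\le T}\|\rho(\cdot,t)\|_{L^{\infty}}\le 2\tilde{\rho},\ \sup_{0\le t\le T}\|\nabla u(\cdot,t)\|_{L^{2}}^{2}\le K(M,E_0,\tilde\rho)\bigr\},
\end{equation*}
the task reduces to ruling out $T^{*}<\infty$ by improving these two bounds to strict inequalities under the smallness hypotheses on $\mathcal E_0$ and on $A$, so that a standard open-closed argument forces $T^{*}=+\infty$ and the regularity class \eqref{classical-sol} follows.

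\textbf{Key a priori estimates.} The starting point is the basic energy identity
\begin{equation*}
\frac{d}{dt}\int\Bigl(\tfrac{1}{2}\rho|u|^{2}+G(\rho)\Bigr)+\mu\int|\nabla u|^{2}+(\mu+\lambda)\int(\div u)^{2}+\int_{\pa\Omega}(Au)\cdot u\,dS=0,
\end{equation*}
whose boundary term is non-negative since $A$ is positive semi-definite, yielding the unconditional bound $\int_{0}^{\infty}\|\nabla u\|_{L^{2}}^{2}\le E_0$. The Nishida-Smoller observation that drives the whole scheme is that, although $E_0$ may be large, the combination $\mathcal E_0=\int\tfrac{1}{2}\rho_0|u_0|^{2}+(\gamma-1)E_0$ is small, and consequently $|P(\rho)-P(\rho_{\infty})|=O(\gamma-1)+O(\mathcal E_0^{\theta})$ once $0\le\rho\le 2\tilde\rho$ is in hand, so that the pressure behaves as a small forcing in higher-order estimates. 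Following the Hoff strategy, I would then multiply the momentum equation by $\dot u=u_t+u\cdot\nabla u$ to obtain
\begin{equation*}
\frac{d}{dt}\int\Bigl(\mu|\curl u|^{2}+(2\mu+\lambda)(\div u)^{2}\Bigr)+\int\rho|\dot u|^{2}\le C(\tilde\rho)\|\nabla u\|_{L^{3}}^{3}+\mathcal R,
\end{equation*}
where $\mathcal R$ collects pressure and slip-boundary remainders, and control $\|\nabla u\|_{L^{3}}^{3}$ through the classical decomposition into the effective viscous flux $F=(2\mu+\lambda)\div u-(P(\rho)-P(\rho_{\infty}))$ and the vorticity $\omega=\curl u$. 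The crucial $L^{\infty}$ density bound is obtained by rewriting the continuity equation in logarithmic form $(2\mu+\lambda)D_t\log\rho+P(\rho)-P(\rho_{\infty})=-F$, integrating along particle trajectories, and applying a Zlotnik-type lemma, with the smallness of $\mathcal E_0$ absorbing the potentially large logarithmic contributions and upgrading $\rho\le 2\tilde\rho$ to a strict inequality.

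\textbf{Higher regularity and the main obstacle.} With the density and first-order energy closed, time-differentiating the momentum equation and testing against $\dot u$ yields $\sqrt{\rho}\,\dot u\in L^{\infty}_{t}L^{2}$, and introducing the time weight $t$ gives the $L^{\infty}(\tau,T;H^{2})$ bound for $u_t$; the transport structure of the continuity equation then propagates $W^{2,q}$-regularity of $\rho-\rho_{\infty}$ and $P(\rho)-P(\rho_{\infty})$, producing the class \eqref{classical-sol}. Regularity up to $t=0$ is handled by \eqref{compatibility-condition} via $\sqrt{\rho}\,\dot u(\cdot,0)=-g$. The main difficulty I anticipate is the \emph{simultaneous} handling of three features that earlier works have treated only in pairs: the unboundedness of $\Omega$ (no Poincar\'e inequality, and Bogovski\u\i{} / Stokes-type estimates for the slip problem must carry constants independent of $\gamma-1$ and of any interior truncation); the boundary integrals generated by $\curl u\times n=-An$, which couple $F$, $\omega$ and their tangential derivatives on $\pa\Omega$ and must be absorbed through the smallness of $\|A\|_{W^{1,6}}$ and $\|A\|_{W^{1,\infty}}$; and the coexistence of vacuum with a large energy reservoir, which forces every estimate into $\sqrt{\rho}$-weighted form while requiring that all constants be traced to be independent of $\gamma-1$. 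Controlling the cross-terms between these three obstructions within a single bootstrap is what dictates the explicit and somewhat intricate form of $\epsilon$ stated in the theorem and constitutes the technical core of the argument.
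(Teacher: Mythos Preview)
Your overall architecture---local existence plus continuation, basic energy, Hoff-type $\dot u$ estimates with the $F$--$\omega$ decomposition, and Zlotnik's lemma for the density---matches the paper. But two points are genuine gaps rather than omitted details. First, the claim that $|P(\rho)-P(\rho_\infty)|=O(\gamma-1)+O(\mathcal E_0^{\theta})$ pointwise is false: with $\rho_\infty=0$ one has $P(\rho)=a\rho^{\gamma}$, which is $O(1)$ wherever $\rho\sim\tilde\rho$. What the smallness of $\mathcal E_0$ actually buys is only the integral bound $\int P\le(\gamma-1)E_0\le\mathcal E_0$ and, crucially, $\int_0^{\sigma(T)}\|\nabla u\|_{L^2}^{2}\le C\mathcal E_0$, while $\int_0^{T}\|\nabla u\|_{L^2}^{2}\le CE_0$ stays large. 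This short-time versus long-time dichotomy forces the bootstrap to run not on $\sup\|\nabla u\|_{L^2}^{2}$ but on the time-weighted quantities $A_1(T)=\sup\sigma\|\nabla u\|_{L^2}^{2}+\int_0^{T}\sigma\rho|\dot u|^{2}$, $A_2(T)=\sup\sigma^{3}\|\sqrt{\rho}\,\dot u\|_{L^2}^{2}+\int_0^{T}\sigma^{3}|\nabla\dot u|^{2}$, and $A_3(\sigma(T))=\sup\int\rho|u|^{3}$, closed at the specific levels $A_1\le\mathcal E_0^{3/8}$, $A_2\le\mathcal E_0^{1/2}$, $A_3\le\mathcal E_0^{1/4}$; the choice $A_1\sim A_2^{3/4}$ is dictated by the inequalities $A_2\lesssim A_1^{3/2}+\cdots$ and $A_1\lesssim A_1^{3/4}A_2^{1/4}E_0^{1/2}+\cdots$, and is what makes the scheme close despite $E_0$ being large.

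Second---and this is where your proposal does not locate the actual difficulty---the exterior-domain Hodge estimate reads $\|\nabla u\|_{L^p}\le C(\|\div u\|_{L^p}+\|\curl u\|_{L^p}+\|\nabla u\|_{L^2})$ for $p\ge 3$, with an unavoidable lower-order tail. When you try to bound $\int_0^{T}\sigma^{3}\|\nabla u\|_{L^4}^{4}$ (needed in the $A_2$ estimate), this tail produces $\int_0^{T}\sigma^{3}\|\nabla u\|_{L^2}^{4}\sim A_1 E_0$, which cannot be absorbed by $A_2$. The paper's fix is a two-step localization: first pass through $\|\nabla u\|_{L^{8/3}}$ (for $p=8/3<3$ the clean Hodge estimate without tail applies) so that the rogue term becomes $\|A\|_{W^{1,6}}^{3/2}\|\nabla u\|_{L^2}^{4}$ and is killed by the smallness of $A$; the residual term $\int_0^{T}\sigma^{3}\|P\|_{L^2}^{4}$ is then handled by testing the momentum equation against a Bogovskii inverse $\mathcal B[P]$. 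That pairing is controlled via $\|\rho\|_{L^{3/2}}\|u\|_{L^6}$, which is finite only because $\rho_\infty=0$ forces $\rho\in L^{3/2}$---this is precisely why the theorem is restricted to $\rho_\infty=0$, a point your outline does not explain. Without these two ingredients (the $L^{8/3}$ detour tied to the $A$-smallness, and the Bogovskii step tied to $\rho_\infty=0$), the $A_2$ estimate does not close in an exterior domain.
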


Here we list some remarks as follows.
\begin{rem}\label{remark-1}
  Our work establishes the first Nishida-Smoller type large-energy solutions for compressible Navier-Stokes equations in exterior domains, overcoming two fundamental difficulties absent in previous studies:

  \begin{itemize}
    \item \textbf{Boundary-layer phenomena}: Unlike the Cauchy problem in \cite{Zhu2024}, the Navier-slip condition \eqref{Navier-slip-condition} introduces boundary integrals such as (see \eqref{boundary-term-A2-4}):
    \begin{equation}\label{boundary-effect}
      \int_{\pa\Omega}(\curl u_t\times n)\cdot\dot{u} dS,
    \end{equation}
   requiring new vorticity control mechanisms near the boundary.

    \item \textbf{Non-compact geometric constraints}: Compare to bounded domain results in \cite{Xu-Zhang2025}, exterior domain geometry prevents key analytic tools, i.e.,
    both the embedding between different $L^p$-spaces and Poincar\'{e}'s inequality are invalid.

  \end{itemize}
\end{rem}

\begin{rem}\label{remark-2}
  Compared to Cai-Li-L\"{u} \cite{Cai-Li-Lv2021} where the global existence and large time behavior of classical solutions to \eqref{Large-CNS-eq}-\eqref{Navier-slip-condition} with
  small initial energy and vacuum are obtained, the initial energy $E_0$ is allowed to be large in our case when $\gamma$ is close to 1 and $A$ is suitably small. Therefore, Theorem \ref{thm-global-CNS-exterior} is still applicable to the case that the initial energy $E_0$ is small for any given $\gamma$ and $A$. In the above theorem, we can further give similar long-time behaviors as in \cite{Cai-Li-Lv2021}, but our attention is more focused on whether the long-time decay rate is  influenced by $\gamma\rightarrow1$, just as in \cite{Xu-Zhang2025}. Unfortunately, we have not gotten this relation, even using the method in \cite{Li-Xin2019}.
\end{rem}

\begin{rem}\label{remark-3}
  Our results reveal an intrinsic relationship between initial energy scaling and adiabatic exponent: Theorem \ref{thm-global-CNS-exterior} constitutes a natural extension of Lions-Feireisl weak solution theory \cite{Lions1998,Feireisl2001} to the regime $\gamma\in(1,\frac{3}{2}]$. Specifically:
  \begin{itemize}
    \item For $\gamma\to1^+$, we permit arbitrarily large $E_0$ through $(\gamma-1)$-compensation;
    \item For fixed $\gamma>\frac{3}{2}$, our framework aligns with classical weak solution requirements.
  \end{itemize}
  This dichotomy highlights a fundamental open question: existence of global classical solutions with large initial data for fixed $\gamma>1$ remains unresolved, suggesting new phenomena may emerge beyond $\gamma$-compensation mechanisms.
\end{rem}

\begin{rem}\label{remark-4}
  The critical scaling relationship
  \begin{equation}\label{critical-scaling}
    (\gamma-1)E_0^{\frac{59}{3}} \leq C
  \end{equation}
  fundamentally differs from previous works \cite{Zhu2017,Zhu2024,Xu-Zhang2025} due to two key factors:
  \begin{itemize}
    \item Slip boundary effects in \eqref{Navier-slip-condition} introducing matrix $A$ dependence
    \item Exterior domain geometry affecting Hodge decomposition \eqref{Hodge-decomposition-exterior-Final}
  \end{itemize}
  It should be mentioned that the smallness condition on $A$ in \eqref{small-condition} provides a boundary counterpart to Zhu's far-field density constraints \cite{Zhu2024}. The technical requirement $\rho_\infty=0$ emerges from essential $L^2$-dissipation estimates needed to control:
  \begin{equation}\label{dissipation-balance}
  \begin{aligned}
    \int_0^T\norm{P-P(\rho_{\infty})}_{L^2}^2&\leq C\norm{\rho}_{L^{\frac{3}{2}}}\norm{\nabla u}_{L^2}\norm{P-P(\rho_{\infty})}_{L^2}\\
    &\quad
    +C\int_0^T\norm{\rho}_{L^{\frac{3}{2}}}\norm{\nabla u}_{L^2}^2
    +\textrm{good terms}.
  \end{aligned}
  \end{equation}
  This constraint reflects the intrinsic challenge of pressure-velocity coupling in exterior domains.
\end{rem}

\begin{rem}\label{remark-5}
  In addition to the conditions of Theorem \ref{thm-global-CNS-exterior}, if assuming further that $\norm{u_0}_{\dot{H}^{\beta}}\leq\tilde{M}$ with $\beta\in(\frac{1}{2},1]$ instead of $\norm{\nabla u_0}_{L^2}\leq M$, then the conclusions in Theorem \ref{thm-global-CNS-exterior} still hold. This can be achieved by a similar way as in \cite{Huang-Li2012}. In our results, we also do not focus on the regularity of the bounded domain $\Omega$ and the matrix $A$, but we can make analogous discussions as in \cite{Cai-Li2023}.
\end{rem}

Now, let us some comments on the analysis of this paper. Similar to the arguments in \cite{Cai-Li-Lv2021} and \cite{Zhu2024}, the key issue in our proof is to derive the time-independent upper bound on the density $\rho$ (see Lemma \ref{lem-rho-bound}). However, compared to \cite{Cai-Li-Lv2021, Zhu2024} where the analysis relies heavily on the smallness of the initial energy $E_0$ or the advantage of the whole space, we need develop new thoughts to handle large initial energy and exterior domain complexities. The main difficulties involves:
\begin{itemize}
    \item Modified energy hierarchy with nonlinear coupling;
    \item Anisotropic dissipation estimates incorporating boundary terms;
    \item Geometric decomposition techniques for exterior domains.
\end{itemize}

In the following, we highlight the main differences and new ingredients:

\begin{itemize}
  \item Since the smallness is imposed on the modified initial energy $\mathcal{E}_0$ instead of the original one $E_0$, we can only obtain the smallness of $\norm{P}_{L^1}$ from the basic energy estimate, while the crucial dissipation $\displaystyle\int_0^T\norm{\nabla u}_{L^2}^2$ has no smallness. To overcome this difficulty, we modify the method of \cite{Zhu2024}. However, due to the boundary effects and the feature of exterior domain, we need employ new ideas to deal with the difficulties arising from exterior domain complexities.

  \item Since the initial energy $E_0$ could be large in our analysis, we can only get the smallness of $\displaystyle\int_0^{\sigma(T)}\norm{\nabla u}_{L^2}^2$ rather than $\displaystyle\int_0^T\norm{\nabla u}_{L^2}^2$ (see Lemma \ref{lem-essential-estimate}). By delicate energy estimates, we can get the estimates of $A_1(T)$ and $A_2(T)$ stated in Lemma \ref{lem-A1-A2-control}:
      \begin{equation}\label{intro-A1-control}
      \begin{aligned}
       A_1(T)&\leq C(\tilde{\rho})\mathcal{E}_0+C(\tilde{\rho})A_1^{\frac{3}{2}}(T)+C\norm{A}_{W^{1,6}}^{\frac{3}{2}}A_1^{\frac{1}{2}}(T)E_0+C\int_0^T\int\sigma(P^3+|\nabla u|^3)\\
       &\leq C\norm{A}_{W^{1,6}}^{\frac{3}{2}}A_1^{\frac{1}{2}}(T)E_0+C\left(\int_{\sigma(T)}^T\norm{P}_{L^4}^4\right)^{\frac{1}{2}}\left(\int_{\sigma(T)}^T\norm{P}_{L^2}^2\right)^{\frac{1}{2}}\\
       &\quad+C\int_{\sigma(T)}^T\int\sigma|\nabla u|^3+\textrm{good terms}\\
       &\leq C\norm{A}_{W^{1,6}}^{\frac{3}{2}}A_1^{\frac{1}{2}}(T)E_0+C(\tilde{\rho})A_1^{\frac{3}{4}}(T)A_2^{\frac{1}{4}}(T)(1+E_0)^{\frac{1}{2}}\\
       &\quad+C\int_{\sigma(T)}^T\int|\nabla u|^3+\hbox{good terms}
      \end{aligned}
      \end{equation}
      with \eqref{A1-estimate-1} and \eqref{nabla-u-P-L^4-estimate1} used here,
      \begin{equation}\label{intro-A2-control}
      \begin{aligned}
       A_2(T)&\leq C(\tilde{\rho})\mathcal{E}_0+CA_1^{\frac{3}{2}}(T)+CA_1(\sigma(T))+C(\tilde{\rho})(A_1^2(T)+A_1^{\frac{5}{3}}(T)E_0^{\frac{1}{3}}+A_1^2(T)E_0^{\frac{1}{2}})\\
       &\quad+C\int_0^T\sigma^3(\norm{\nabla u}_{L^4}^4+\norm{P|\nabla u|}_{L^2}^2+\norm{\nabla u}_{L^{\frac{8}{3}}}^4+\norm{A}_{W^{1,6}}^{\frac{4}{3}}\norm{\nabla u}_{L^2}^4).
      \end{aligned}
      \end{equation}
      From the two inequalities above, to close the estimates on $A_1(T)$ and $A_2(T)$, we observe that $A_1^{\frac{3}{2}}(T)\ll A_2(T)\ll A_1(T)$.
     In the spirit of this key observation, we specifically choose
     $A_1(T)\sim A_2^{\frac{3}{4}}(T)$. It should be mentioned that from \eqref{intro-A1-control} certain smallness conditions on the matrix $A$ is necessary to control the bad terms such as $\norm{A}_{W^{1,6}}^{\frac{3}{2}}A_1^{\frac{1}{2}}(T)E_0$, and finally to close the energy estimates on $A_1(T)$ and $A_2(T)$. In addition, due to the unboundedness of exterior domain and the boundary effects, we should give new calculations on the estimates of $A_1(T)$ and $A_2(T)$ in Lemma \ref{lem-A1-A2-control}, which is new and very different from \cite{Zhu2024} and \cite{Cai-Li-Lv2021}. We should remark that the compact supports of the matrix $A$ and the outer normal vector $n$ (which are both extended to the functions on $\Omega$) are frequently used in the computations.
  \item The control of $\displaystyle\int_0^T\sigma^3\norm{\nabla u}_{L^4}^4$ appearing in \eqref{intro-A2-control} is the most difficult part of this paper. Due to the Hodge-type decomposition \eqref{Hodge-decomposition-exterior-Final} for the exterior domain (with hole), there exists an extra term $\displaystyle C\int_0^T\sigma^3\norm{\nabla u}_{L^2}^4$ when controlling $\displaystyle\int_0^T\sigma^3\norm{\nabla u}_{L^4}^4$. However, $\displaystyle C\int_0^T\sigma^3\norm{\nabla u}_{L^2}^4$ can not be controlled by $A_2(T)$ due to lack of smallness of $\displaystyle \int_0^T\norm{\nabla u}_{L^2}^2$. This impels us to find some new estimate on $\norm{\nabla u}_{L^4}^4$ involving $\norm{\nabla u}_{L^2}$ as little as possible. Fortunately, we observe that
      \begin{equation*}
      \begin{aligned}
        \norm{\nabla u}_{L^4}&\leq C(\norm{\div u}_{L^4}+\norm{\curl u}_{L^4}+\norm{\nabla u}_{L^{\frac{8}{3}}})\\
        &\leq C(\norm{\div u}_{L^4}+\norm{\curl u}_{L^4}+\norm{\div u}_{L^{\frac{8}{3}}}+\norm{\curl u}_{L^{\frac{8}{3}}}),
      \end{aligned}
      \end{equation*}
      in which $\displaystyle\int_0^T\sigma^3\norm{\nabla u}_{L^2}^4$ finally can be absorbed by the smallness of $A$, just as discussed in Remark \ref{rem-small-A}. But there still remains one term $\displaystyle\int_0^T\sigma^3\norm{P-P(\rho_{\infty})}_{L^2}^4$ in \eqref{nabla-u-L^4-estimate2} that needs an extra estimate. The key idea here is to introduce the Bogovskii's operator and use the momentum equation $\eqref{Large-CNS-eq}_2$ to calculate the dissipation $\displaystyle\int_0^T\sigma^3\norm{P-P(\rho_{\infty})}_{L^2}^2$ as in \eqref{P-L^2-estimate1}. However, only if $\rho_{\infty}=0$, we can obtain a satisfactory estimate on $\displaystyle\int_0^T\sigma^3\norm{P-P(\rho_{\infty})}_{L^2}^2$. More precisely, similar as in \eqref{P-L^2-estimate1}, we have
      \begin{equation}\label{intro-P-L^2-estimate}
        \begin{aligned}
        &\quad\norm{P-P(\rho_{\infty})}_{L^2}^2\\
        &\leq\left(\int\rho u\cdot\mathcal{B}[P-P(\rho_{\infty})]\right)_t+\int\rho u\cdot\mathcal{B}[\div(P u)+(\gamma-1) P\div u]\\
        &\quad+C\norm{P-P(\rho_{\infty})}_{L^2}(\norm{\nabla u}_{L^2}+\norm{\rho|u|^2}_{L^2})\\
        &\leq\left(2\int\rho u\cdot\mathcal{B}[P-P(\rho_{\infty})]\right)_t+C\norm{\rho}_{L^{\frac{3}{2}}}\norm{u}_{L^6}(\norm{\mathcal{B}[\div(P u)]}_{L^6}+\norm{\mathcal{B}[P\div u]}_{L^6})\\
        &\quad+C(\norm{\nabla u}_{L^2}^2+\norm{\rho|u|^2}_{L^2}^2)\\
        &\leq\left(2\int\rho u\cdot\mathcal{B}[P-P(\rho_{\infty})]\right)_t+C(\tilde{\rho})\norm{\rho}_{L^{\frac{3}{2}}}\norm{\nabla u}_{L^2}^2+C\norm{\nabla u}_{L^2}^2(1+C(\tilde{\rho})\norm{\rho^{\frac{1}{3}}u}_{L^3}^2).
        \end{aligned}
      \end{equation}
       From the above estimate, we observe that the first term as $\displaystyle\int\rho u\cdot\mathcal{B}[P-P(\rho_{\infty})]$ and the second one as $\norm{\rho}_{L^{\frac{3}{2}}}\norm{\nabla u}_{L^2}^2$ both require $\rho\in L^{\frac{3}{2}}$, which, however, is invalid when $\rho_{\infty}>0$.
       That is why we assume the restrictive condition $\rho_{\infty}=0$ in Theorem \ref{thm-global-CNS-exterior}.
  \item In the last,  to estimate $\displaystyle\int_{\sigma(T)}^T\int|\nabla u|^3$, we need employ the boundary-adapted nonlinear localization technique from Remark \ref{rem-small-A}. This is very different from \cite{Zhu2024} where the estimate $\displaystyle\int_{\sigma(T)}^T\int|\nabla u|^3$ can be deduced from the interpolation of $\norm{\nabla u}_{L^2}$ and $\norm{\nabla u}_{L^4}$ directly as in \eqref{A1-estimate-2}.
\end{itemize}

The rest of the paper is organized as follows: In the next section, we introduce some elementary lemmas that will be needed later. In Section \ref{sect-proof-thm}, we give the proof of Theorem \ref{thm-global-CNS-exterior}.

\section{Preliminary}
This section mainly introduces some elementary lemmas used later. First, we give the local existence of strong solutions as follows.

\begin{lem}\label{lem-local-sol}
  Let $\Omega$ be as in Theorem \ref{thm-global-CNS-exterior}, and assume that $(\rho_0,u_0)$ satisfies \eqref{initial-data1}-\eqref{compatibility-condition}. Then there exist a small $T>0$ and a unique strong solution $(\rho,u)$ to the problem \eqref{Large-CNS-eq}-\eqref{far-field-condition} on $\Omega\times(0,T]$ satisfying for any $\tau\in(0,T)$,
  \begin{equation*}
    \begin{cases}
      (\rho-\rho_{\infty},P-P(\rho_{\infty}))\in C([0,T];W^{2,q}),\\
     \nabla u\in C([0,T];H^1)\cap L^{\infty}(\tau,T;W^{2,q}),\\
     u_t\in L^{\infty}(\tau,T;H^2)\cap H^1(\tau,T;H^1),\\
     \sqrt{\rho}u_t\in L^{\infty}(0,\infty;L^2).
    \end{cases}
  \end{equation*}
\end{lem}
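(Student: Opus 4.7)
The plan is to adapt the Cho--Choe--Kim strategy for vacuum-compatible initial data to the present exterior domain with Navier-slip boundary conditions; the analogous whole-space statement is proved in \cite{Cho2004,Cho2006-1,Cho2006-2,Choe2003,Salvi1993}, the exterior Dirichlet version in \cite{Cai-Li-Lv2021}, and the bounded-domain/slip version in \cite{Cai-Li2023,Xu-Zhang2025}. First I would remove the vacuum by replacing $\rho_0$ with $\rho_0^{\delta}:=\rho_0+\delta$ for $\delta\in(0,1)$ and solving the resulting non-degenerate initial-boundary value problem by a standard iteration: given a velocity iterate $v^{k}$, transport $\rho^{k+1}$ along the flow of $v^{k}$ (the slip condition $v^{k}\cdot n=0$ keeps $\pa\Omega$ invariant, so the method of characteristics yields $\rho^{k+1}\in C([0,T];W^{2,q})$), and then, with $\rho^{k+1}$ in hand, solve the linear Lam\'e-type system for $u^{k+1}$ with boundary conditions \eqref{Navier-slip-condition} and far-field decay \eqref{far-field-condition}. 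Elliptic regularity for the slip Lam\'e operator in exterior domains (the theory used in \cite{Cai-Li-Lv2021,Cai-Li2023}) closes the iteration on a short time interval $[0,T_{\delta}]$ and produces a unique solution $(\rho^{\delta},u^{\delta})$ with the regularity stated in the lemma, except that $\rho^{\delta}$ is strictly positive.

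The crucial step is then to derive estimates that are uniform in $\delta$ on some common interval $[0,T_*]\subset[0,T_{\delta}]$. The compatibility condition \eqref{compatibility-condition} is designed precisely for this: evaluating the momentum equation at $t=0$ and dividing by $\sqrt{\rho_0^{\delta}}$ gives
\begin{equation*}
\sqrt{\rho_0^{\delta}}\,u_t(\cdot,0)=-\sqrt{\rho_0^{\delta}}\,u_0\cdot\nabla u_0-\sqrt{\rho_0/\rho_0^{\delta}}\,g,
\end{equation*}
which is bounded in $L^2$ uniformly in $\delta$. Differentiating the momentum equation in time, testing with $u_t^{\delta}$ and propagating the bound then yields $\sqrt{\rho^{\delta}}\,u_t^{\delta}\in L^{\infty}(0,T_*;L^2)$; the slip boundary integrals generated by integration by parts are controlled using that $u_t\cdot n=0$ on $\pa\Omega$ (which follows by differentiating $u\cdot n=0$ in time) together with the prescribed tangential trace $\curl u\times n=-An$. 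Re-interpreting the momentum equation as a stationary Lam\'e problem with right-hand side $-\rho^{\delta}u_t^{\delta}-\rho^{\delta}u^{\delta}\cdot\nabla u^{\delta}-\nabla P(\rho^{\delta})$ and invoking the slip elliptic regularity upgrades $u^{\delta}$ to the class $\nabla u^{\delta}\in C([0,T_*];H^1)\cap L^{\infty}(\tau,T_*;W^{2,q})$, while standard transport estimates on $\rho^{\delta}-\rho_{\infty}$ and $P(\rho^{\delta})-P(\rho_{\infty})$ deliver the $W^{2,q}$ bound on the density. A further time differentiation furnishes the remaining $H^2$- and $H^1$-control on $u_t$ stated in the conclusion.

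Passing to the limit $\delta\to 0^+$ then yields a solution $(\rho,u)$ of the original problem with the regularity asserted in the lemma, via weak-$*$ compactness in the relevant reflexive spaces and the Aubin--Lions lemma to obtain strong convergence in lower norms (enough to handle the nonlinear terms). Uniqueness follows from an $L^2$ energy estimate on the difference of two solutions with the same initial data, exploiting that one of them has sufficient regularity to absorb the linear contributions. The main obstacle, relative to the whole-space or Dirichlet-boundary arguments, is the systematic bookkeeping of the slip boundary integrals throughout the $\delta$-independent estimates; in particular, the $H^2$-type estimate for $u$ must rely on the slip elliptic theory rather than the Dirichlet one, which produces lower-order correction terms involving the matrix $A$ and the second fundamental form of $\pa\Omega$ that must be absorbed into the principal part. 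These modifications are routine but lengthy, so in a full write-up I would invoke the analogous estimates already developed in \cite{Cai-Li-Lv2021,Cai-Li2023} and only detail the adjustments specific to the exterior-domain/slip/vacuum combination.
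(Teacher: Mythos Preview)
Your proposal is correct and follows the standard Cho--Choe--Kim regularization-and-iteration scheme, which is precisely the method underlying the references the paper invokes. The paper itself does not give a proof at all: it simply states that the lemma ``can be deduced by combining the local existence result in \cite{Huang2021} and the initial-boundary-value problem under Navier boundary conditions with non-vacuum in \cite{Hoff2012} or vacuum in \cite{Hoff2005}.'' Your sketch is thus considerably more detailed than what the paper provides, and the ingredients you identify (density regularization $\rho_0+\delta$, linear transport/Lam\'e iteration, the role of the compatibility condition \eqref{compatibility-condition} in bounding $\sqrt{\rho}\,u_t$ at $t=0$, slip-adapted elliptic regularity, and $\delta\to0$ compactness) are exactly the ones needed to flesh out the cited references into the present exterior-domain setting.
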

This lemma can be deduced by combining the local existence result in \cite{Huang2021} and the initial-boundary-value problem under Navier boundary conditions with non-vacuum in \cite{Hoff2012} or vacuum in \cite{Hoff2005}.

Next, the well-known Gagliardo-Nirenberg interpolation inequality will be used frequently later.

\begin{lem}[see Theorem 2.1 in \cite{Crispo2004}]\label{lem-GN-inequality}
Assume that $\Omega$ is the exterior of a simply connected domain $D$ in $\mathbb{R}^3$ with Lipschitz boundary. Then for $p\in[2,6]$, $q\in(1,\infty)$, and $r\in(3,\infty)$, there exists some generic constant $C>0$ depending only on $p,q,r$ and $\Omega$ such that
\begin{equation}\label{GN-inequality1}
  \norm{f}_{L^p}\leq C\norm{f}_{L^2}^{\frac{6-p}{2p}}\norm{\nabla f}_{L^2}^{\frac{3p-6}{2p}},
\end{equation}
\begin{equation}\label{GN-inequality2}
  \norm{g}_{C(\bar{\Omega})}\leq C\norm{g}_{L^q}^{\frac{q(r-3)}{3r+q(r-3)}}\norm{\nabla g}_{L^r}^{\frac{3r}{3r+q(r-3)}}.
\end{equation}

\end{lem}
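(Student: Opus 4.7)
The plan is to reduce both inequalities in the lemma to the classical Gagliardo--Nirenberg estimates on the whole space $\mathbb{R}^3$ by means of a uniform extension operator, which is available because $\Omega$ has Lipschitz boundary. Concretely, I would invoke Stein's universal extension theorem for Lipschitz domains: there is a bounded linear operator $E$ that simultaneously maps $L^s(\Omega)\to L^s(\mathbb{R}^3)$ and $W^{1,s}(\Omega)\to W^{1,s}(\mathbb{R}^3)$ for all $s\in[1,\infty]$, with operator norms depending only on $s$ and $\Omega$. A routine modification using a cutoff supported in a neighbourhood of $\partial\Omega$ also yields control of the homogeneous seminorm $\|\nabla(Ef)\|_{L^s(\mathbb{R}^3)}\le C\|\nabla f\|_{L^s(\Omega)}$, which is the form actually needed, since no lower-order $L^s$ norm of $f$ appears on the right-hand side of the second factor in \eqref{GN-inequality1} or \eqref{GN-inequality2}.

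With the extension in hand, the first inequality \eqref{GN-inequality1} follows from its Euclidean counterpart: for $p\in[2,6]$, H\"older interpolation between $L^2$ and $L^6$ gives $\|\tilde f\|_{L^p(\mathbb{R}^3)}\le \|\tilde f\|_{L^2}^{(6-p)/(2p)}\|\tilde f\|_{L^6}^{(3p-6)/(2p)}$ where $\tilde f=Ef$, and the Sobolev embedding $\dot H^1(\mathbb{R}^3)\hookrightarrow L^6(\mathbb{R}^3)$ turns the $L^6$ factor into $\|\nabla\tilde f\|_{L^2}$. Restricting back via $\|f\|_{L^p(\Omega)}\le \|\tilde f\|_{L^p(\mathbb{R}^3)}$ and applying the boundedness of $E$ produces \eqref{GN-inequality1} with a constant depending only on $p$ and $\Omega$. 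The exponents $(6-p)/(2p)$ and $(3p-6)/(2p)$ are the unique choice fixed by the scaling $f(x)\mapsto f(\lambda x)$.

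For the second inequality \eqref{GN-inequality2}, with $r>3$ fixed, I would use the standard averaging argument on $\mathbb{R}^3$: for any $x_0\in\bar\Omega$ and $R>0$, decompose
\[
\tilde g(x_0) = \frac{1}{|B_R|}\int_{B_R(x_0)}\tilde g(y)\,dy + \frac{1}{|B_R|}\int_{B_R(x_0)}\bigl(\tilde g(x_0)-\tilde g(y)\bigr)\,dy,
\]
where $\tilde g=Eg$. The first piece is bounded by $CR^{-3/q}\|\tilde g\|_{L^q(\mathbb{R}^3)}$ via H\"older, while the second piece is controlled by $CR^{1-3/r}\|\nabla\tilde g\|_{L^r(\mathbb{R}^3)}$ via the Morrey pointwise estimate that is valid when $r>3$. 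Optimising in $R$ so that the two contributions balance yields $R\sim(\|\tilde g\|_{L^q}/\|\nabla\tilde g\|_{L^r})^{qr/(3r+q(r-3))}$ and produces exactly the exponents $q(r-3)/(3r+q(r-3))$ and $3r/(3r+q(r-3))$. Combining $\|g\|_{C(\bar\Omega)}\le\|\tilde g\|_{L^\infty(\mathbb{R}^3)}$ with the extension bounds then completes the proof.

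The main technical obstacle is the first step: producing an extension operator that simultaneously preserves $L^s$ norms and the homogeneous seminorms $\|\nabla\cdot\|_{L^s}$ on an unbounded Lipschitz domain. Stein's classical construction via reflection across the boundary works on bounded Lipschitz domains; extending it uniformly to an exterior domain requires care only in a neighbourhood of $\partial D$, while far from $\partial D$ one may simply glue with a smooth cutoff. Once the extension is secured, the remaining ingredients are H\"older interpolation, the Sobolev embedding $\dot H^1(\mathbb{R}^3)\hookrightarrow L^6(\mathbb{R}^3)$, and the Morrey-type mean-value optimisation on $\mathbb{R}^3$, all of which are classical and together constitute the content of Crispo's Theorem 2.1.
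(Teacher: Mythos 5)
The paper offers no proof of this lemma at all: it is quoted verbatim as Theorem 2.1 of Crispo--Maremonti \cite{Crispo2004}, so there is no internal argument to compare yours against. Your extension-based route is a legitimate way to obtain both inequalities, and your reductions on $\mathbb{R}^3$ (H\"older interpolation plus $\dot H^1\hookrightarrow L^6$ for \eqref{GN-inequality1}, and the mean-value/Morrey optimisation in $R$ for \eqref{GN-inequality2}, whose exponents you compute correctly) are standard and sound.

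The one step that does not work as written is exactly the one you flag as the crux: obtaining $\|\nabla(Ef)\|_{L^s(\mathbb{R}^3)}\le C\|\nabla f\|_{L^s(\Omega)}$ by ``a routine modification using a cutoff.'' Differentiating a cutoff $\chi$ produces a term $f\,\nabla\chi$, whose $L^s$ norm is $\|f\|_{L^s}$ on the support of $\nabla\chi$ and is \emph{not} controlled by $\|\nabla f\|_{L^s}$ alone; if such a term survives, \eqref{GN-inequality1} degenerates to $\|f\|_{L^p}\le C\|f\|_{L^2}^{\theta}(\|f\|_{L^2}+\|\nabla f\|_{L^2})^{1-\theta}$, which is not the stated homogeneous inequality. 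The correct mechanism, which your sketch omits, is that the complement $D$ is \emph{bounded}: fix a bounded collar $N=\Omega\cap B_{2R}$, set $c=\fint_N f$, extend $f-c$ from $N$ into $D$ by a bounded-domain extension operator, and add back $c$ inside $D$. Poincar\'e--Wirtinger on $N$ gives $\|f-c\|_{L^s(N)}\le C\|\nabla f\|_{L^s(N)}$, so the resulting global extension satisfies $\|\nabla\tilde f\|_{L^s(\mathbb{R}^3)}\le C\|\nabla f\|_{L^s(\Omega)}$ and simultaneously $\|\tilde f\|_{L^q(\mathbb{R}^3)}\le C\|f\|_{L^q(\Omega)}$ (the latter because $|c|\,|D|^{1/q}\le C\|f\|_{L^q(N)}$ by H\"older), which is what both inequalities require, including \eqref{GN-inequality2} where the two norms live in different Lebesgue exponents. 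This is also where the exterior-domain geometry enters essentially: on a bounded domain the homogeneous inequality \eqref{GN-inequality1} is false (take $f\equiv 1$), so no argument can be purely ``local near $\partial D$ plus gluing''; it must use that only the bounded hole needs to be filled. With this replacement your proof is complete.
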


The following Zlotnik's inequality is introduced to get the upper bound of the density $\rho$.

\begin{lem}[see \cite{Zlotnik2000}]\label{lem-Zlotnik-inequality}
  Suppose the function $y$ satisfies that
  \begin{equation*}
    y'(t)=g(y)+b'(t),\, t\in[0,T], \quad y(0)=y_0,
  \end{equation*}
  with $g\in C(\mathbb{R})$ and $y,b\in W^{1,1}(0,T)$. If $g(\infty)=-\infty$ and
  \begin{equation}\label{Zlotnik-condition1}
    b(t_2)-b(t_1)\leq N_0+N_1(t_2-t_1)
  \end{equation}
  for all $0\leq t_1<t_2\leq T$ with some $N_0\geq 0$ and $N_1\geq 0$, then
  \begin{equation*}
    y(t)\leq\max\{y_0,\zeta_0\}+N_0<\infty \textrm{ on }[0,T],
  \end{equation*}
  where $\zeta_0$ is a constant such that
  \begin{equation}\label{Zlotnik-condition2}
    g(\zeta)\leq -N_1 \textrm{ for }\zeta\geq\zeta_0.
  \end{equation}
\end{lem}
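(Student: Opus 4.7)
The plan is to prove the desired pointwise upper bound on $y$ by a comparison argument. Note first that $y, b \in W^{1,1}(0,T)$ implies $y$ is absolutely continuous on $[0,T]$, so $y$ is continuous, and the fundamental theorem of calculus is available for integrating $y'$. The hypothesis $g(\infty) = -\infty$ together with $g \in C(\mathbb{R})$ guarantees that a threshold $\zeta_0$ with $g(\zeta) \leq -N_1$ for $\zeta \geq \zeta_0$ actually exists.

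Set $M := \max\{y_0, \zeta_0\} + N_0$ and suppose, toward contradiction, that $y(t^*) > M$ for some $t^* \in (0,T]$. I would then isolate the last time at which $y$ is below $\zeta_0$ by defining
\begin{equation*}
  t_* := \sup\bigl\{\, s \in [0,t^*] : y(s) \leq \zeta_0 \,\bigr\},
\end{equation*}
with the convention $t_* = 0$ if the set is empty. In the non-empty case, continuity of $y$ forces $y(t_*) \leq \zeta_0$ (in fact $y(t_*) = \zeta_0$ when $t_* > 0$), while on the open interval $(t_*, t^*)$ one has $y(s) > \zeta_0$ and hence $g(y(s)) \leq -N_1$. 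Integrating the ODE on $[t_*, t^*]$ and applying the sub-linear bound \eqref{Zlotnik-condition1} on $b$ yields
\begin{equation*}
  y(t^*) - y(t_*) = \int_{t_*}^{t^*} g(y(s))\,ds + \bigl(b(t^*)-b(t_*)\bigr) \leq -N_1(t^*-t_*) + N_0 + N_1(t^*-t_*) = N_0,
\end{equation*}
so $y(t^*) \leq y(t_*) + N_0$. If the supremum set was empty, then $y(s) > \zeta_0$ on all of $[0,t^*]$, so the same integration with $t_* = 0$ gives $y(t^*) \leq y_0 + N_0$; otherwise $y(t_*) \leq \zeta_0$. In either case $y(t^*) \leq \max\{y_0,\zeta_0\} + N_0 = M$, contradicting $y(t^*) > M$ and completing the argument.

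The only subtlety I anticipate is the careful handling of the two configurations in the definition of $t_*$ — specifically making sure that the case $y_0 > \zeta_0$ (where $y$ may never dip down to $\zeta_0$ before $t^*$) is treated correctly by falling back on the $y_0$ side of the maximum. Everything else is essentially bookkeeping: continuity of $y$, the sign condition on $g$ above the threshold, and a clean cancellation between $-N_1(t^*-t_*)$ coming from $g$ and the $N_1(t^*-t_*)$ term from the bound on the increment of $b$.
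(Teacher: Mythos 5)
Your argument is correct: the contradiction setup via $t_*=\sup\{s\in[0,t^*]:y(s)\leq\zeta_0\}$, the sign condition $g(y(s))\leq-N_1$ on $(t_*,t^*)$, and the cancellation with the increment bound on $b$ together give $y(t^*)\leq y(t_*)+N_0$, and the two cases (set empty versus non-empty) are handled properly. The paper itself does not prove this lemma but simply cites Zlotnik's work; your proof is the standard comparison argument underlying that reference, so there is nothing to reconcile.
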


Next, the following two Hodge-type decompositions in a bounded domain are given, whose proofs can be found in [Theorem 3.2, \cite{von-Wahl1992}] and [Propositions 2.6-2.9, \cite{Aramaki2014}].

\begin{lem}\label{lem-Hodge-decomposition-bounded}
  Let integer $k\geq 0$ and $p\in(1,\infty)$, and assume that $D$ is a bounded domain in $\mathbb{R}^3$ with $C^{k+1,1}$ boundary $\pa D$. Then there exists a constant $C=C(p,k,\Omega)>0$ such that
  \begin{itemize}
    \item If $v\in W^{k+1,p}$ with $v\cdot n|_{\pa D}=0$,
    \begin{equation}\label{Hodge-decomposition1}
      \norm{v}_{W^{k+1,p}}\leq C(\norm{\div v}_{W^{k,p}}+\norm{\curl v}_{W^{k,p}}+\norm{v}_{L^p}).
    \end{equation}
    In particular, if $D$ is simply connected, we have
    \begin{equation}\label{Hodge-decomposition1-connected}
      \norm{v}_{W^{k+1,p}}\leq C(\norm{\div v}_{W^{k,p}}+\norm{\curl v}_{W^{k,p}}).
    \end{equation}
    \item If the boundary $\pa D$ only has a finite number of 2-dimensional connected components and $v\in W^{k+1,p}$ with $v\times n|_{\pa D}=0$, then
        \begin{equation}\label{Hodge-decomposition2}
      \norm{v}_{W^{k+1,p}}\leq C(\norm{\div v}_{W^{k,p}}+\norm{\curl v}_{W^{k,p}}+\norm{v}_{L^p}).
    \end{equation}
    In particular, if $D$ has no holes, then
    \begin{equation}\label{Hodge-decomposition2-no-hole}
      \norm{v}_{W^{k+1,p}}\leq C(\norm{\div v}_{W^{k,p}}+\norm{\curl v}_{W^{k,p}}).
    \end{equation}
  \end{itemize}
\end{lem}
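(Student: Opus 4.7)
The plan is to reduce both parts of the lemma to the scalar elliptic regularity theory for the Laplacian on $D$, using the vector identity $-\Delta v = \curl\curl v - \nabla\div v$, and then to account for the finite-dimensional space of harmonic fields via the topology of $D$. Throughout, one works on a bounded domain so that Neumann and Dirichlet Laplacians have the standard $W^{k+2,p}$ regularity theory under the $C^{k+1,1}$ boundary assumption.

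For the normal case $v\cdot n|_{\pa D}=0$, I would first solve the scalar Neumann problem
\begin{equation*}
  \Delta\phi = \div v \text{ in } D, \quad \pa_n\phi = 0 \text{ on } \pa D,
\end{equation*}
whose compatibility condition $\int_D \div v = \int_{\pa D} v\cdot n = 0$ is automatic. Standard elliptic regularity yields $\norm{\nabla\phi}_{W^{k+1,p}} \leq C\norm{\div v}_{W^{k,p}}$. The residual $w := v - \nabla\phi$ then satisfies $\div w = 0$, $w\cdot n|_{\pa D}=0$, and $\curl w = \curl v$. Next I would represent $w = \curl A$ where $A$ solves the gauge-fixed elliptic system $-\Delta A = \curl v$ with $\div A = 0$ and $A\times n|_{\pa D}=0$, yielding $\norm{w}_{W^{k+1,p}} \leq C\norm{\curl v}_{W^{k,p}}$ by Agmon-Douglis-Nirenberg type regularity.

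Combining these bounds produces $\norm{v}_{W^{k+1,p}} \leq C(\norm{\div v}_{W^{k,p}} + \norm{\curl v}_{W^{k,p}} + \norm{\Pi v}_{L^p})$, where $\Pi$ is the $L^p$-projection onto the space of harmonic Neumann fields $\mathcal{H}_N := \{h : \div h = 0,\ \curl h = 0,\ h\cdot n|_{\pa D}=0\}$. By Hodge--de Rham theory for manifolds with boundary, $\dim\mathcal{H}_N$ equals the first Betti number of $D$; this vanishes when $D$ is simply connected, producing \eqref{Hodge-decomposition1-connected}. The tangential case $v\times n|_{\pa D}=0$ is treated analogously, with Dirichlet conditions replacing Neumann ones for the potentials; the harmonic Dirichlet fields $\mathcal{H}_D$ then have dimension equal to the number of connected components of $\pa D$ minus one, hence zero when $\pa D$ is connected (no holes), giving \eqref{Hodge-decomposition2-no-hole}.

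The main obstacle will be constructing the vector potential $A$ in the second step with the correct $W^{k+1,p}$ regularity and compatible boundary conditions so that the decomposition $v = \nabla\phi + \curl A + h$ is clean, which requires either a layer-potential construction (as in von Wahl) or a carefully gauged elliptic system with appropriate $L^p$ solvability on the orthogonal complement of $\mathcal{H}_N$ or $\mathcal{H}_D$ (as in Aramaki). The remaining delicate point is the identification of $\dim\mathcal{H}_N$ and $\dim\mathcal{H}_D$ with the topological invariants of $D$ and $\pa D$; for these I would appeal directly to the cited references.
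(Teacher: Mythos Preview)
The paper does not actually prove this lemma: it is stated as a citation, with the sentence ``the following two Hodge-type decompositions in a bounded domain are given, whose proofs can be found in [Theorem 3.2, von Wahl 1992] and [Propositions 2.6--2.9, Aramaki 2014],'' and no further argument is supplied. So there is no in-paper proof to compare against; the lemma functions as a black-box input to the subsequent exterior-domain estimates.

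Your sketch is a reasonable outline of the standard Hodge--Morrey--Friedrichs approach that those references carry out: extract a gradient via a scalar Neumann (resp.\ Dirichlet) problem, represent the divergence-free remainder via a gauge-fixed vector potential, and handle the residual harmonic-field component using the finite dimensionality of $\mathcal{H}_N$ or $\mathcal{H}_D$. Your identification of $\dim\mathcal{H}_N$ with the first Betti number (hence zero for simply connected $D$) and of $\dim\mathcal{H}_D$ with the number of internal cavities (hence zero when $D$ has no holes) is correct. You are right that the nontrivial technical work sits in the vector-potential step---getting $A$ with the correct boundary condition and $W^{k+2,p}$ regularity so that $\curl A$ lands in the right space---and this is precisely what the cited sources supply. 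Since the paper itself only cites these results, your proposal goes beyond what the paper does rather than diverging from it.
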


Also from \cite{von-Wahl1992} and \cite{Louati2016}, we can get following Hodge-type decompositions for the exterior domain.

\begin{lem}\label{lem-Hodge-decomposition-exterior}
  Let $D$ be a simply connected domain in $\mathbb{R}^3$ with $C^{1,1}$ boundary, and $\Omega$ is the exterior of $D$. Then for $v\in W^{1,q}$, it holds that
  \begin{itemize}
    \item If $v\cdot n=0$ on $\pa\Omega$ (see Theorem 3.2 in \cite{von-Wahl1992}),
    \begin{equation}\label{Hodge-decomposition3-exterior1}
      \norm{\nabla v}_{L^q}\leq C(\norm{\div v}_{L^q}+\norm{\curl v}_{L^q}) \textrm{ for }1<q<3,
    \end{equation}
    and
    \begin{equation}\label{Hodge-decomposition3-exterior2}
      \norm{\nabla v}_{L^q}\leq C(\norm{\div v}_{L^q}+\norm{\curl v}_{L^q}+\norm{\nabla v}_{L^{q_0}}) \textrm{ for $3\leq q<\infty$ and some $q_0\in(1,3)$};
    \end{equation}
    \item If $v\times n=0$ on $\pa\Omega$ (see Theorem 5.1 in \cite{Louati2016} with $\alpha=0$),
    \begin{equation}\label{Hodge-decomposition4-exterior}
      \norm{\nabla v}_{L^q}\leq C(\norm{\div v}_{L^q}+\norm{\curl v}_{L^q}+\norm{v}_{L^q}) \textrm{ for }1<q<\infty.
    \end{equation}
  \end{itemize}
\end{lem}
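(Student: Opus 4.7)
The plan is to reduce both estimates to elliptic regularity for the vector Laplacian via the identity $-\Delta v = \curl\curl v - \nabla\div v$, coupled with the prescribed boundary data. For each case I would view $f=\div v$ and $g=\curl v$ as given sources and solve the resulting div-curl system in $\Omega=\mathbb{R}^3\setminus D$ through a Hodge projection combined with the Calder\'on-Zygmund / Agmon-Douglis-Nirenberg theory for second-order elliptic systems in exterior domains.

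For the normal-boundary case $v\cdot n=0$, the natural move is to decompose $v=\nabla\phi+\curl\psi$, where $\phi$ solves the exterior Neumann problem $\Delta\phi=\div v$ with $\partial_n\phi|_{\partial\Omega}=0$, and $\psi$ is a divergence-free vector potential satisfying $-\Delta\psi=\curl v$ together with a compatible gauge on $\partial\Omega$. Weighted $L^q$-theory for these exterior elliptic problems yields $\norm{\nabla^2\phi}_{L^q}+\norm{\nabla^2\psi}_{L^q}\leq C(\norm{\div v}_{L^q}+\norm{\curl v}_{L^q})$, which delivers \eqref{Hodge-decomposition3-exterior1} in the range $1<q<3$. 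The threshold $q=3$ is intrinsic to $\mathbb{R}^3$: the Newtonian representations of $\phi$ and $\psi$ rest on the Hardy-Littlewood-Sobolev inequality, which produces integrable gradient decay only in that range. Once $q\geq 3$ this decay mechanism breaks, so one adds the lower-order correction $\norm{\nabla v}_{L^{q_0}}$ with some $q_0\in(1,3)$ on the right-hand side; this term both absorbs the potential harmonic part and compensates for the missing Riesz-potential bounds, producing \eqref{Hodge-decomposition3-exterior2}.

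For the tangential case $v\times n=0$, essentially the same decomposition applies but with the boundary roles reversed: $\phi$ now carries a Dirichlet condition while $\psi$ encodes the normal-component data, and the finite-dimensional space of harmonic vector fields on the exterior domain is no longer automatically annihilated by the source pair $(\div v,\curl v)$. An extra $\norm{v}_{L^q}$ therefore has to be kept on the right-hand side for every $q\in(1,\infty)$, giving \eqref{Hodge-decomposition4-exterior}.

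The main technical obstacle, were one to develop a self-contained argument, is the careful bookkeeping of the compatibility conditions and of the space of harmonic fields on $\Omega$, combined with a cutoff near $\partial D$ that patches exterior weighted estimates with interior Calder\'on-Zygmund bounds. Rather than reproduce this machinery, I would invoke [Theorem 3.2, \cite{von-Wahl1992}] for the normal case and [Theorem 5.1, \cite{Louati2016}] with $\alpha=0$ for the tangential case as black-box inputs, and in the subsequent applications simply verify that the auxiliary $\norm{\nabla v}_{L^{q_0}}$ term in \eqref{Hodge-decomposition3-exterior2} is harmless by interpolation between an a priori $L^2$ bound and higher-$L^q$ control on $\nabla v$.
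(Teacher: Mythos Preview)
Your proposal is correct and matches the paper's treatment: the paper does not give an independent proof of this lemma but simply cites Theorem~3.2 in \cite{von-Wahl1992} for the normal case and Theorem~5.1 in \cite{Louati2016} (with $\alpha=0$) for the tangential case, exactly as you do in your final paragraph. The elliptic-regularity and Hodge-decomposition heuristics you outline are reasonable background, but neither you nor the paper carry them out in detail, so the approaches coincide.
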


Combining Lemma \ref{lem-Hodge-decomposition-bounded} and Lemma \ref{lem-Hodge-decomposition-exterior}, we can eliminate the term $\norm{v}_{L^q}$ which is not easy to be controlled in unbounded domain. This is given in the following Lemma.

\begin{lem}\label{lem-Hodge-decomposition-exterior-F}
  Let $D$ be a simply connected bounded domain in $\mathbb{R}^3$ with smooth boundary, and $\Omega$ is the exterior of $D$. Then for any $p\in[2,6]$ and integer $k\geq 0$, there exists some constant $C>0$ depending only on $p$, $k$ and $D$ such that if $v\cdot n=0$ or $v\times n=0$ on $\pa\Omega$ and $v(x,t)\rightarrow 0$ as $|x|\rightarrow\infty$, it holds that
\begin{equation}\label{Hodge-decomposition-exterior-Final}
  \norm{\nabla v}_{W^{k,p}}\leq C(\norm{\div v}_{W^{k,p}}+\norm{\curl v}_{W^{k,p}}+\norm{\nabla v}_{L^2}).
\end{equation}
\end{lem}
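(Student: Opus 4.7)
My plan is to combine the two Hodge-type results already at hand (Lemmas \ref{lem-Hodge-decomposition-bounded} and \ref{lem-Hodge-decomposition-exterior}) via a smooth cutoff that localizes the problem to a bounded domain near $\partial\Omega$ on one hand, and to the full space $\mathbb{R}^3$ (where Calderón--Zygmund / Riesz transform estimates apply) on the other, and then to upgrade from $k=0$ to arbitrary $k$ by induction.

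I would first settle the base case $k=0$ directly from Lemma \ref{lem-Hodge-decomposition-exterior}. For $p\in[2,3)$, inequality \eqref{Hodge-decomposition3-exterior1} already gives the stronger bound $\|\nabla v\|_{L^p}\leq C(\|\div v\|_{L^p}+\|\curl v\|_{L^p})$, to which the $\|\nabla v\|_{L^2}$ term can be appended trivially. For $p\in[3,6]$, I would apply \eqref{Hodge-decomposition3-exterior2} with the admissible choice $q_0=2\in(1,3)$, which produces exactly \eqref{Hodge-decomposition-exterior-Final}.

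For the inductive step $k\geq 1$, fix radii $R_2>R_1>0$ with $\overline{D}\subset B_{R_1}$ and pick a cutoff $\xi\in C_c^\infty(\mathbb{R}^3)$ with $\xi\equiv 1$ on $\overline{B_{R_1}}$ and $\mathrm{supp}\,\xi\subset B_{R_2}$. Decompose $v=v_1+v_2$ with $v_1:=\xi v$ and $v_2:=(1-\xi)v$. The inner piece $v_1$ is supported in the bounded domain $\Omega_0:=\Omega\cap B_{R_2}$, whose boundary has the two smooth components $\partial D$ and $\partial B_{R_2}$; the assumed boundary condition on $v$ is inherited on $\partial D$, while on $\partial B_{R_2}$ we have $v_1\equiv 0$, so the hypotheses of Lemma \ref{lem-Hodge-decomposition-bounded} (either \eqref{Hodge-decomposition1} or \eqref{Hodge-decomposition2}) apply, yielding
\begin{equation*}
\|v_1\|_{W^{k+1,p}(\Omega_0)}\leq C\bigl(\|\div v_1\|_{W^{k,p}}+\|\curl v_1\|_{W^{k,p}}+\|v_1\|_{L^p}\bigr).
\end{equation*}
The outer piece $v_2$ extends by zero to all of $\mathbb{R}^3$, decays at infinity, and therefore satisfies the full-space Helmholtz estimate
\begin{equation*}
\|\nabla v_2\|_{W^{k,p}(\mathbb{R}^3)}\leq C\bigl(\|\div v_2\|_{W^{k,p}(\mathbb{R}^3)}+\|\curl v_2\|_{W^{k,p}(\mathbb{R}^3)}\bigr),
\end{equation*}
which is a standard consequence of the $L^p$-boundedness of the Riesz transforms ($1<p<\infty$). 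Adding the two bounds and using $\nabla v=\nabla(\xi v)+\nabla((1-\xi)v)$ provides the control on $\|\nabla v\|_{W^{k,p}}$.

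The remaining task is to absorb the commutator terms generated by Leibniz when expanding $\div(\xi v)$, $\curl(\xi v)$, $\div((1-\xi)v)$, $\curl((1-\xi)v)$ (and their derivatives up to order $k$): each such term involves a derivative of $\xi$ times a derivative of $v$ of order at most $k$, and is supported in the compact annulus $\Omega\cap(B_{R_2}\setminus B_{R_1})$. I would dispose of them by induction on $k$ applied to the same inequality on the whole exterior domain, which controls all $W^{k-1,p}$ local norms of $\nabla v$ by the desired right-hand side. The leftover $\|v_1\|_{L^p}$ (and analogous zeroth-order terms of $v$ on the annulus) is handled by the Sobolev embedding $D^{1,2}(\Omega)\hookrightarrow L^6(\Omega)$, valid for functions decaying at infinity, combined with the local interpolation $\|v\|_{L^p(\Omega_0)}\leq C\|v\|_{L^6(\Omega_0)}$ for $p\in[2,6]$. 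The main obstacle I expect is the careful bookkeeping of these commutator terms for general $k$, and in particular the fact that the bounded-domain estimate in Lemma \ref{lem-Hodge-decomposition-bounded} carries an unavoidable $L^p$-remainder, which forces the iterative use of the $(k-1)$-level statement rather than a one-shot argument.
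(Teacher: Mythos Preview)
Your overall strategy---cutoff into a bounded piece near $\partial\Omega$ and a compactly supported piece in $\mathbb{R}^3$, then induct on $k$---is exactly the paper's approach, and your inductive step is essentially identical to what the paper does.

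There is, however, a gap in your base case. You settle $k=0$ ``directly from Lemma~\ref{lem-Hodge-decomposition-exterior}'' by citing \eqref{Hodge-decomposition3-exterior1}--\eqref{Hodge-decomposition3-exterior2}, but those two estimates are stated only under the boundary condition $v\cdot n=0$. For the other alternative $v\times n=0$, the only available exterior-domain estimate is \eqref{Hodge-decomposition4-exterior}, whose right-hand side carries $\|v\|_{L^p(\Omega)}$ on the \emph{entire} unbounded domain; for $p<6$ this is not controlled by $\|\nabla v\|_{L^2}$ (the Sobolev embedding $D^{1,2}(\Omega)\hookrightarrow L^6(\Omega)$ gives only $L^6$, and there is no global interpolation down to $L^p$ on an infinite-measure set). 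So as written your base case does not cover the tangential boundary condition.

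The fix is immediate and already implicit in your own inductive step: run the cutoff argument at $k=0$ as well. Then the problematic lower-order term becomes $\|v\|_{L^p(B_{R_2}\cap\Omega)}$ on a \emph{bounded} set, and your observation $\|v\|_{L^p(\Omega_0)}\leq C\|v\|_{L^6(\Omega_0)}\leq C\|\nabla v\|_{L^2(\Omega)}$ disposes of it. This is precisely what the paper does: it never invokes Lemma~\ref{lem-Hodge-decomposition-exterior} for the base case, but instead establishes the intermediate inequality
\[
\|\nabla v\|_{W^{k,p}(\Omega)}\leq C\bigl(\|\div v\|_{W^{k,p}(\Omega)}+\|\curl v\|_{W^{k,p}(\Omega)}+\|v\|_{W^{k,p}(B_{2R}\cap\Omega)}\bigr)
\]
uniformly for both boundary conditions via the cutoff, and only then specializes to $k=0$.
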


\begin{proof}
  The detailed proof can be found in \cite{Cai-Li-Lv2021}, and here we only give a sketch of the proof. The strategy used in this lemma is to decompose $\Omega$ into two parts: inner domain and the exterior.

  First taking $B_R=\{x\in\mathbb{R}^3||x|<R\}$ such that $\bar{D}\subset B_R$, it follows from Gagliardo-Nirenberg inequality \eqref{GN-inequality1} that there exists constant $C>0$ depending only on $D$ and $p$ such that for any $p\in[2,6]$ and $v\in\{v\in D^{1,2}(\Omega)|v(x,t)\rightarrow0\textrm{ as }|x|\rightarrow\infty\}$,
  \begin{equation}\label{transition-lem-Hodge1}
    \norm{v}_{L^p(B_{2R}\cap\Omega)}\leq C(p,D)\norm{v}_{L^6(B_{2R}\cap\Omega)}\leq C(p,D)\norm{v}_{L^6(\Omega)}\leq C(p,D)\norm{\nabla v}_{L^2},
  \end{equation}
  which together with Sobolev imbedding and tracing theorem yields
  \begin{equation}\label{transition-lem-boundary}
    \norm{v}_{L^4(\pa\Omega)}\leq C(\pa\Omega)\norm{v}_{H^{\frac{1}{2}}(\pa\Omega)}\leq C(D)\norm{v}_{H^1(B_{2R}\cap\Omega)}\leq C(D)\norm{\nabla v}_{L^2(\Omega)}.
  \end{equation}
  Actually, by a similar argument we can get a general inequality as
  \begin{equation}\label{transition-lem-boundary1}
    \norm{v}_{L^q(\pa\Omega)}\leq C(D,q,r)\norm{\nabla u}_{L^r(\Omega)}
  \end{equation}
  for $q\in(1,\infty)$, $r\in[2,3)$ satisfying $-\frac{2}{q}\leq 1-\frac{3}{r}$.

  Secondly, we introduce a cut-off function $\eta(x)\in C_c^{\infty}(B_{2R})$ satisfying that $\eta(x)=1$ for $|x|\leq R$, $0<\eta(x)<1$ for $R<|x|<2R$, $\eta(x)=0$ for $|x|\geq 2R$ and $|\pa^{\alpha}\eta(x)|<C(R,\alpha)$ for any $0\leq|\alpha|\leq k+1$. Let $v\cdot n=0$ or $v\times n=0$ on $\pa\Omega$ and $v(x,t)\rightarrow 0$ as $|x|\rightarrow\infty$.

  Then for the inner part, we deduce from Lemma \ref{lem-Hodge-decomposition-bounded} that
  \begin{equation}\label{inner-part-lem}
  \begin{aligned}
    &\quad\norm{\nabla(\eta v)}_{W^{k,p}(\Omega)}=\norm{\nabla(\eta v)}_{W^{k,p}(B_{2R}\cap\Omega)}\\
    &\leq C(D,k,p)(\norm{\div(\eta v)}_{W^{k,p}(B_{2R}\cap\Omega)}+\norm{\curl(\eta v)}_{W^{k,p}(B_{2R}\cap\Omega)}+\norm{\eta v}_{L^p(B_{2R}\cap\Omega)})\\
    &\leq C(D,k,p)(\norm{\div v}_{W^{k,p}(\Omega)}+\norm{\curl v}_{W^{k,p}(\Omega)}+\norm{v}_{W^{k,p}(B_{2R}\cap\Omega)}).
  \end{aligned}
  \end{equation}
  Similarly, for the exterior part, the standard $L^p$-elliptic estimate gives that
  \begin{equation}\label{exterior-part-lem}
  \begin{aligned}
    &\quad\norm{\nabla((1-\eta)v)}_{W^{k,p}(\Omega)}=\norm{\nabla((1-\eta)v)}_{W^{k,p}(\mathbb{R}^3)}\\
    &\leq C(k,p)(\norm{\div((1-\eta)v)}_{W^{k,p}(\mathbb{R}^3)}+\norm{\curl((1-\eta)v)}_{W^{k,p}(\mathbb{R}^3)})\\
    &\leq C(D,k,p)(\norm{\div v}_{W^{k,p}(\Omega)}+\norm{\curl v}_{W^{k,p}(\Omega)}+\norm{v}_{W^{k,p}(B_{2R}\cap\Omega)}),
  \end{aligned}
  \end{equation}
  which together with \eqref{inner-part-lem} yields
  \begin{equation}\label{total-part-lem}
    \norm{\nabla v}_{W^{k,p}(\Omega)}\leq C(D,k,p)(\norm{\div v}_{W^{k,p}(\Omega)}+\norm{\curl v}_{W^{k,p}(\Omega)}+\norm{v}_{W^{k,p}(B_{2R}\cap\Omega)}).
  \end{equation}
  For $k=0$ and $p\in[2,6]$, combining \eqref{transition-lem-Hodge1} and \eqref{total-part-lem} gives
  \begin{equation}\label{total-part-lem-k-0}
    \norm{\nabla v}_{L^p(\Omega)}\leq C(D,p)(\norm{\div v}_{L^p(\Omega)}+\norm{\curl v}_{L^p(\Omega)}+\norm{\nabla v}_{L^2(\Omega)}),
  \end{equation}
  which proves \eqref{Hodge-decomposition-exterior-Final} with $k=0$. Also taking $k=1$ in \eqref{total-part-lem} and combining \eqref{total-part-lem-k-0} and \eqref{transition-lem-Hodge1}, we prove the case $k=1$ of \eqref{Hodge-decomposition-exterior-Final}. Hence an inductive derivation finally leads to \eqref{Hodge-decomposition-exterior-Final}, and finishes the proof of Lemma \ref{lem-Hodge-decomposition-exterior-F}.

\end{proof}

\begin{rem}
 In fact, from the proof of Lemma \ref{lem-Hodge-decomposition-exterior-F}, the following estimate holds:
  \begin{equation}\label{Hodge-decomposition-exterior-Final-M}
  \norm{\nabla v}_{W^{k,p}}\leq C(\norm{\div v}_{W^{k,p}}+\norm{\curl v}_{W^{k,p}}+\norm{v}_{L^p(B_{2R}\cap\Omega)}),
\end{equation}
for any $p\in(1,\infty)$.
\end{rem}

In the next lemma, we will introduce the Bogovskii operator in an exterior domain, which can be used to control the dissipation  $\displaystyle\int_0^T\norm{P}_{L^2}^2$ for the pressure $P$.

\begin{lem}[see Lemma 3.24 in \cite{Novotny2004} or Theorem III.3.6 in \cite{Galdi2011}]\label{lem-Bogovskii-operator}
  Let $\Omega$ be an exterior domain with Lipschitz boundary. Then there
exists a linear operator $\mathcal{B}: L^p(\Omega)\rightarrow D_0^{1,p}(\Omega)$ for any $p\in(1,\infty)$ such that
\begin{equation*}
  \begin{cases}
    \div\mathcal{B}[f]=f, & \mbox{a.e. in }\Omega, \\
    \mathcal{B}[f]=0, & \mbox{on }\pa\Omega,
  \end{cases}
\end{equation*}
and
\begin{equation*}
  \norm{\nabla\mathcal{B}[f]}_{L^p(\Omega)}\leq C(p,\Omega)\norm{f}_{L^p(\Omega)}.
\end{equation*}
In particular, if $f=\div g$ and $g\cdot n=0$ on $\pa\Omega$, it holds that
\begin{equation*}
  \norm{\mathcal{B}[f]}_{L^p(\Omega)}\leq C(p,\Omega)\norm{g}_{L^p(\Omega)}.
\end{equation*}
\end{lem}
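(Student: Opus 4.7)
The plan is to construct $\mathcal{B}$ on the exterior domain $\Omega$ by gluing together two well-understood building blocks: the classical Bogovskii operator on a bounded Lipschitz annulus, and a Newtonian-potential solution on the exterior of a large ball. First I would fix radii $R_1 < R_2$ with $\bar D \subset B_{R_1}$, and choose a cutoff $\chi \in C_c^\infty(B_{R_2})$ with $\chi \equiv 1$ on $B_{R_1}$. Given $f \in L^p(\Omega)$, decompose $f = f_i + f_e$, where $f_i := \chi f$ is supported in the bounded Lipschitz annulus $\Omega_i := \Omega \cap B_{R_2}$ and $f_e := (1-\chi) f$ is supported in the exterior-of-ball region $\Omega_e := \mathbb{R}^3 \setminus \bar B_{R_1}$.

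Next, I fix $\psi \in C_c^\infty(B_{R_2} \setminus \bar B_{R_1})$ with $\int \psi = 1$ and set $c := \int f_i$. On the bounded Lipschitz domain $\Omega_i$ the function $f_i - c\psi$ has zero mean, so the classical Bogovskii construction (starting from the explicit integral formula on a star-shaped subdomain and extended to Lipschitz domains by a partition-of-unity argument) produces $w_i$ solving $\div w_i = f_i - c\psi$ in $\Omega_i$, with $w_i|_{\partial \Omega_i} = 0$ and $\|\nabla w_i\|_{L^p(\Omega_i)} \leq C\|f\|_{L^p}$; here $|c| \leq \|\chi\|_{L^{p/(p-1)}(B_{R_2})} \|f\|_{L^p}$ together with the fixed norm $\|\psi\|_{L^p}$ absorbs the correction. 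On $\Omega_e$, I would extend $f_e + c\psi$ by zero to $\mathbb{R}^3$ and set $\widetilde{w} := \nabla\!\left(-\frac{1}{4\pi|x|} * (f_e + c\psi)\right)$; then $\div \widetilde w = f_e + c\psi$ in $\mathbb{R}^3$ and $\|\nabla \widetilde w\|_{L^p} \leq C \|f\|_{L^p}$ by the Calder\'on--Zygmund bound for $\nabla^2 \Delta^{-1}$. Then subtract a further bounded-domain Bogovskii correction on a thin shell near $\partial B_{R_1}$ so that $w_e := \widetilde w - (\text{correction})$ vanishes on $\partial B_{R_1}$. Extending $w_i$ and $w_e$ by zero outside their natural domains (both vanish on the artificial internal boundaries $\partial B_{R_2}$ and $\partial B_{R_1}$, so the extensions are $W^{1,p}$), I would set $\mathcal{B}[f] := w_i + w_e$. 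The divergences then match piecewise---on $\Omega \cap B_{R_1}$ one has $f_i = f$ and $f_e = 0 = \psi$, on the overlap the $\pm c\psi$ cancel, and on the far field $f_e = f$---so $\div \mathcal{B}[f] = f$ in $\Omega$, $\mathcal{B}[f] = 0$ on $\partial\Omega \subset \partial\Omega_i$, and $\|\nabla \mathcal{B}[f]\|_{L^p(\Omega)} \leq C\|f\|_{L^p(\Omega)}$.

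For the sharper inequality when $f = \div g$ with $g \cdot n = 0$ on $\partial\Omega$, I would exploit the explicit integral representation. On each star-shaped piece the Bogovskii operator reads $\mathcal{B}_G[h](x) = \int_G K(x,y)\,h(y)\,dy$ with a kernel $K$ weakly singular of order $|x-y|^{-2}$. Substituting $h = \div g$ and integrating by parts in $y$ gives
\begin{equation*}
    \mathcal{B}[\div g](x) = -\int \nabla_y K(x,y) \cdot g(y)\,dy + \textrm{(boundary terms)},
\end{equation*}
where the boundary integrals over $\partial\Omega$ vanish because $g \cdot n = 0$, while the cutoffs used to glue the interior and exterior pieces kill the artificial internal interfaces. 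The kernel $\nabla_y K$ is a genuine Calder\'on--Zygmund kernel of order $|x-y|^{-3}$, so standard singular-integral theory yields $\|\mathcal{B}[\div g]\|_{L^p(\Omega)} \leq C\|g\|_{L^p(\Omega)}$, which is a full derivative better than the generic bound.

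The hard part will be the bookkeeping around the zero-mean compatibility. The exterior divergence equation $\div w = f$ does not require $\int_\Omega f = 0$ because mass can escape to infinity, but the bounded-domain Bogovskii on $\Omega_i$ does, which is exactly what forces the correction $c\psi$ to be threaded through the entire construction. Verifying that the resulting constant depends only on $\|f\|_{L^p}$ and not on the typically infinite $\|f\|_{L^1}$ hinges on $\psi$ being supported in a fixed bounded overlap annulus, so that $|c| \leq C\|f\|_{L^p}$ by H\"older. A secondary technical point is reconciling the $D_0^{1,p}$ decay of $w_e$ at infinity with the Newtonian potential representation; this follows from the $L^p$ tail of $f_e$ and the $|x|^{-2}$ pointwise decay of $\nabla(|x|^{-1}*\cdot)$ against compactly supported data.
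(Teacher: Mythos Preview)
The paper does not supply its own proof of this lemma; it simply cites Lemma~3.24 in Novotn\'{y}--Stra\v{s}kraba and Theorem~III.3.6 in Galdi. Your construction---cutting off near the obstacle, handling the inner annulus with the classical bounded-domain Bogovskii operator, handling the far field via a Newtonian potential, and threading a mean-correction $c\psi$ between them---is precisely the standard argument in those references, so your approach matches what the paper defers to.

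Two places where the sketch is loose, though neither is a genuine gap. First, the ``further bounded-domain Bogovskii correction on a thin shell near $\partial B_{R_1}$'' is underspecified: the trace of $\widetilde w$ on $\partial B_{R_1}$ has no obvious zero-mean property. The cleanest fix is to multiply $\widetilde w$ by a second cutoff vanishing near $\partial B_{R_1}$ and absorb the commutator $\nabla\eta\cdot\widetilde w$ (compactly supported in the overlap annulus, zero integral by the divergence theorem) into the inner Bogovskii piece. Second, for the $f=\div g$ estimate you should also track the commutators $g\cdot\nabla\chi$ arising from $\chi\,\div g=\div(\chi g)-g\cdot\nabla\chi$ and the correction $c\psi$; these are compactly supported $L^p$ functions, so the generic $D^{1,p}$ bound plus Poincar\'e on their support already controls them in $L^p$. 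Finally, your far-field decay remark is slightly off: $f_e$ is \emph{not} compactly supported, so the $|x|^{-2}$ pointwise argument does not apply; the correct justification is the Calder\'on--Zygmund bound $\nabla\widetilde w\in L^p$ together with the homogeneous Sobolev embedding.
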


\vspace{1em}
Now, we rewrite $\eqref{Large-CNS-eq}_2$ as
\begin{equation}\label{momentum-equation}
  \rho\dot{u}=\nabla G-\mu\nabla\times\curl u
\end{equation}
with
\begin{equation*}
  \curl u=\nabla\times u,\, G=(2\mu+\lambda)\div u-(P-P(\rho_{\infty})),\, \dot{f}:=f_t+u\cdot\nabla f,
\end{equation*}
where both the vorticity $\curl u$ and the effective viscous flux $G$ play an important role in the following analysis. Here, we give the following key a priori estimates on $\curl u$ and $G$ which will be used frequently.

\begin{lem}\label{lem-curl-effective-viscous}
  Assume that $\Omega$ is the exterior of a simply connected bounded domain in $\mathbb{R}^3$ and its smooth boundary $\pa\Omega$ only has a finite number of 2D connected components. Let $(\rho,u)$ be a smooth solution of \eqref{Large-CNS-eq} satisfying Navier-slip boundary conditions \eqref{Navier-slip-condition} and far-field condition \eqref{far-field-condition}. Then for any $p\in[2,6]$ and $q\in(1,\infty)$, there exist a constant $C>0$ depending only on $p,q,\mu,\lambda$, $\Omega$ and $A$ such that
  \begin{equation}\label{curl-effective-estimate1}
    \norm{\nabla G}_{L^p}+\norm{\nabla\curl u}_{L^p}\leq C(\norm{\rho\dot{u}}_{L^p}+\norm{\rho\dot{u}}_{L^2}+\norm{\nabla u}_{L^2}+\norm{P-P(\rho_{\infty})}_{L^p}+\norm{P-P(\rho_{\infty})}_{L^2}),
  \end{equation}
  \begin{equation}\label{curl-effective-estimate2}
    \norm{G}_{L^p}\leq C\norm{\rho\dot{u}}_{L^2}^{\frac{3p-6}{2p}}(\norm{\nabla u}_{L^2}+\norm{P-P(\rho_{\infty})}_{L^2})^{\frac{6-p}{2p}}+C(\norm{\nabla u}_{L^2}+\norm{P-P(\rho_{\infty})}_{L^2}),
  \end{equation}
  \begin{equation}\label{curl-effective-estimate3}
    \norm{\curl u}_{L^p}\leq C\norm{\rho\dot{u}}_{L^2}^{\frac{3p-6}{2p}}\norm{\nabla u}_{L^2}^{\frac{6-p}{2p}}+C\norm{\nabla u}_{L^2},
  \end{equation}
  \begin{equation}\label{curl-effective-estimate4}
  \begin{aligned}
    \norm{\nabla u}_{L^p}&\leq C\norm{\rho\dot{u}}_{L^2}^{\frac{3p-6}{2p}}(\norm{\nabla u}_{L^2}+\norm{P-P(\rho_{\infty})}_{L^2})^{\frac{6-p}{2p}}\\
    &\quad+C(\norm{\nabla u}_{L^2}+\norm{P-P(\rho_{\infty})}_{L^p}+\norm{P-P(\rho_{\infty})}_{L^2}).
  \end{aligned}
  \end{equation}
  In particular, for $p=2$, the term $\norm{P-\bar{P}}_{L^p}$ on the right hand side of \eqref{curl-effective-estimate1} can be removed.
\end{lem}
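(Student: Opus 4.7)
The plan is to derive elliptic equations for the effective viscous flux $G$ and the vorticity $\curl u$ directly from the momentum form \eqref{momentum-equation}, apply exterior-domain $L^p$ elliptic theory together with the Hodge decomposition of Lemma \ref{lem-Hodge-decomposition-exterior-F}, and then pass to the remaining estimates by Gagliardo-Nirenberg interpolation and the Hodge decomposition applied to $u$ itself. Taking the divergence and curl of \eqref{momentum-equation} and using $\div(\nabla \times \cdot) = 0$, $\nabla \times \nabla(\cdot) = 0$, and $\nabla \times (\nabla \times \curl u) = -\Delta \curl u$ (since $\div \curl u = 0$), produces the pair $\Delta G = \div(\rho\dot u)$ and $\mu \Delta \curl u = \nabla \times (\rho \dot u)$ in $\Omega$. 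The boundary data for $\curl u$ come directly from \eqref{Navier-slip-condition} as $\curl u \times n = -An$ together with $\div \curl u = 0$, while for $G$ the relation $\nabla G \cdot n|_{\partial\Omega} = \rho \dot u \cdot n + \mu(\nabla \times \curl u) \cdot n$, with $\dot u \cdot n = -u^{\top}(\nabla n) u$ obtained by time-differentiating $u \cdot n = 0$, supplies a Neumann-type condition whose boundary contribution is controllable via $A$, $u$ and the tangential surface divergence of $-An$.

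For \eqref{curl-effective-estimate1}, I would first reduce the nontrivial tangential trace of $\curl u$: fix once and for all a smooth compactly supported extension $W$ of $-An$ into $\Omega$, so that $v := \curl u - W$ satisfies $v \times n = 0$ on $\partial\Omega$ and $v \to 0$ at infinity. Lemma \ref{lem-Hodge-decomposition-exterior-F} applied to $v$ together with the identity $\mu\nabla \times \curl u = \nabla G - \rho\dot u$ yields $\|\nabla \curl u\|_{L^p} \leq C(\|\nabla G\|_{L^p} + \|\rho\dot u\|_{L^p} + \|\nabla \curl u\|_{L^2})$, where all $W$-dependent contributions are absorbed into $C = C(A)$. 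For $\nabla G$, I would patch the standard Neumann $L^p$ elliptic estimate inside $B_{2R} \cap \Omega$ with the exterior elliptic estimate on $\mathbb{R}^3 \setminus B_R$ via the cutoff/localization technique from the proof of Lemma \ref{lem-Hodge-decomposition-exterior-F}; combined with the boundary-data expression above and trace inequalities such as \eqref{transition-lem-boundary1}, this gives $\|\nabla G\|_{L^p} \leq C(\|\rho\dot u\|_{L^p} + \|\rho\dot u\|_{L^2} + \|\nabla u\|_{L^2} + \|P-P(\rho_\infty)\|_{L^p} + \|P-P(\rho_\infty)\|_{L^2})$. Substitution produces \eqref{curl-effective-estimate1}; when $p=2$, the additional $L^p$ pressure term coincides with the $L^2$ one already present, which explains the final clause of the lemma.

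The bounds \eqref{curl-effective-estimate2}-\eqref{curl-effective-estimate4} then follow by interpolation. The Gagliardo-Nirenberg inequality \eqref{GN-inequality1} in the form $\|f\|_{L^p} \leq C\|f\|_{L^2}^{(6-p)/(2p)}\|\nabla f\|_{L^2}^{(3p-6)/(2p)}$ applied to $f = G$, with the $L^2$ bound $\|G\|_{L^2} \leq C(\|\nabla u\|_{L^2} + \|P-P(\rho_\infty)\|_{L^2})$ coming from the definition $G = (2\mu+\lambda)\div u - (P - P(\rho_\infty))$ and $\|\nabla G\|_{L^2}$ coming from the $p=2$ case of \eqref{curl-effective-estimate1}, gives \eqref{curl-effective-estimate2}; the same inequality with $f = \curl u$ and the elementary bound $\|\curl u\|_{L^2} \leq C\|\nabla u\|_{L^2}$ gives \eqref{curl-effective-estimate3}. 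Finally, \eqref{curl-effective-estimate4} follows from Lemma \ref{lem-Hodge-decomposition-exterior-F} applied to $u$ itself (which has $u\cdot n|_{\partial\Omega}=0$ and $u \to 0$ at infinity), combined with $(2\mu+\lambda)\div u = G + (P - P(\rho_\infty))$ and the estimates already obtained. The main technical obstacle is the careful reduction of the inhomogeneous tangential boundary datum $\curl u \times n = -An$ to a zero-trace Hodge problem so that Lemma \ref{lem-Hodge-decomposition-exterior-F} applies with constants independent of $\gamma - 1$, together with adapting the Neumann elliptic estimate for $G$ to the exterior geometry via the same localization strategy used in the proof of that lemma, rather than by invoking bounded-domain elliptic theory directly.
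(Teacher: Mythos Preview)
Your overall strategy---derive elliptic problems for $G$ and $\curl u$, correct $\curl u$ to enforce a homogeneous tangential trace, apply the Hodge estimate of Lemma~\ref{lem-Hodge-decomposition-exterior-F} together with Gagliardo--Nirenberg interpolation, then close via the Hodge bound on $u$---matches the paper's, and your derivation of \eqref{curl-effective-estimate2}--\eqref{curl-effective-estimate4} from \eqref{curl-effective-estimate1} is essentially identical to what the paper does.

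Two points deserve comment. First, a genuine issue: despite how \eqref{Navier-slip-condition} is printed, the boundary condition actually in force throughout the paper is $\curl u\times n=-Au$, not $-An$ (compare the use of $(Au)^{\bot}$ in the paper's proof of this lemma and the time-differentiated relation $\curl u_t\times n=-Au_t$ in \eqref{boundary-term-A2-4}). Consequently your ``fixed extension $W$ of $-An$'' does not work as stated: the correct subtraction is the $u$-dependent field $(Au)^{\bot}:=n\times(Au)$, and its contribution is not absorbable into a constant $C(A)$ but instead generates the $\|\nabla u\|_{L^p}$ and $\|\nabla u\|_{L^2}$ terms on the right-hand side, via the compact support of $A$, $n$ and \eqref{transition-lem-Hodge1}. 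With a truly fixed $W$ you would acquire an additive constant on the right of \eqref{curl-effective-estimate1}--\eqref{curl-effective-estimate3}, which those inequalities do not carry.

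Second, a difference of technique for $\nabla G$: the paper does not localize. Since $(\curl u+(Au)^{\bot})\times n=0$ on $\partial\Omega$, an integration by parts gives $\int\nabla\times\curl u\cdot\nabla\eta=-\int\nabla\times(Au)^{\bot}\cdot\nabla\eta$ for every $\eta\in C_c^\infty(\bar{\Omega})$, and \eqref{momentum-equation} then yields the clean weak Neumann form
\[
\int\nabla G\cdot\nabla\eta=\int\bigl(\rho\dot u-\mu\nabla\times(Au)^{\bot}\bigr)\cdot\nabla\eta,
\]
to which the exterior-domain $L^p$ elliptic theory (\cite{Novotny2004}, Lemma~5.6) applies directly, producing $\|\nabla G\|_{L^q}\le C(\|\rho\dot u\|_{L^q}+\|\nabla\times(Au)^{\bot}\|_{L^q})$ for every $q\in(1,\infty)$ in one step. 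Your patching route would also succeed once the $(Au)^{\bot}$ correction is in place, but the paper's argument is shorter, avoids the Neumann-trace bookkeeping you outline, and gives the full $L^q$ range immediately; the preliminary bound $\|\nabla G\|_{L^p}+\|\nabla\curl u\|_{L^p}\le C(\|\rho\dot u\|_{L^p}+\|\nabla u\|_{L^p}+\|\nabla u\|_{L^2})$ is then closed by substituting \eqref{curl-effective-estimate4} back for $\|\nabla u\|_{L^p}$.
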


\begin{proof}
  Since the proof of this lemma is similar to that of \cite{Cai-Li-Lv2021}, we only need to make some modifications and give a sketch of proof for simplicity. First, for the estimate on $\nabla F$, we consider the following elliptic equations:
  \begin{equation}\label{G-elliptic-equation}
    \begin{cases}
      \Delta G=\div(\rho\dot{u}), & x\in\Omega, \\
      \frac{\pa G}{\pa n}=(\rho\dot{u}-\mu\nabla\times(Au)^{\bot})\cdot n, & x\in\pa\Omega,
    \end{cases}
  \end{equation}
 where the notation
  \begin{equation}\label{defi-vertical}
    f^{\bot}=-f\times n=n\times f.
  \end{equation}
  It should be noticed that the normal vector $n$ only makes sense on boundary $\pa\Omega$, and can be extended to a smooth and compactly supported vector-valued function on $\bar{\Omega}$. Thus $f^{\bot}$ is well-defined on $\bar{\Omega}$. Similar arguments can be also applicable to $(Au)^{\bot}$. Here, we assume that the extensions of $n$ and $A$ are both supported on $B_{2R}$.

  Due to \eqref{Navier-slip-condition}, $(\curl u+(Au)^{\bot})\times n=0$ on $\pa\Omega$. Then, for any $\eta\in C_c^{\infty}(\bar{\Omega})$, we have
  \begin{equation*}
    \begin{aligned}
    \int\nabla\times\curl u\cdot\nabla\eta&=\int(\nabla\times(\curl u+(Au)^{\bot})\cdot\nabla\eta-\int\nabla\times(Au)^{\bot}\cdot\nabla\eta\\
    &=-\int\nabla\times(Au)^{\bot}\cdot\nabla\eta,
    \end{aligned}
  \end{equation*}
  which combined with \eqref{momentum-equation} implies that
  \begin{equation}\label{G-test-equation}
    \int\nabla G\cdot\nabla\eta=\int(\rho\dot{u}-\mu\nabla\times(Au)^{\bot})\cdot\nabla\eta,\quad \forall\eta\in C_c^{\infty}(\bar{\Omega}).
  \end{equation}
  Then applying the standard elliptic estimate for \eqref{G-test-equation} (see Lemma 5.6 in \cite{Novotny2004}) yields that for any $q\in(1,\infty)$
  \begin{equation}\label{G-transition-estimate1}
  \begin{aligned}
    \norm{\nabla G}_{L^q}&\leq C\norm{\rho\dot{u}-\mu\nabla\times(Au)^{\bot}}_{L^q}\\
    &\leq C(\norm{\rho\dot{u}}_{L^q}+\norm{\nabla\times(Au)^{\bot}}_{L^q}),
    \end{aligned}
  \end{equation}
  and for any integer $k\geq0$,
  \begin{equation}\label{G-transition-estimate2}
  \begin{aligned}
    \norm{\nabla G}_{W^{k+1,q}}&\leq C(\norm{\rho\dot{u}-\mu\nabla\times(Au)^{\bot}}_{L^q}+\norm{\div(\rho\dot{u})}_{W^{k,q}})\\
    &\leq C(\norm{\rho\dot{u}}_{W^{k+1,q}}+\norm{\nabla\times(Au)^{\bot}}_{L^q}).
  \end{aligned}
  \end{equation}
  For the vorticity $\curl u$, due to $(\curl u+(Au)^{\bot})\times n|_{\pa\Omega}=0$, \eqref{momentum-equation} and \eqref{Hodge-decomposition-exterior-Final-M}, we get that for any $q\in(1,\infty)$
  \begin{equation}\label{curl-transition-estimate1}
  \begin{aligned}
    \norm{\nabla\curl u}_{L^q}&\leq C(\norm{\nabla\times\curl u}_{L^q}+\norm{\nabla(Au)^{\bot}}_{L^q}+\norm{\curl u+(Au)^{\bot}}_{L^q(B_{2R}\cap\Omega)})\\
    &\leq C(\norm{\rho\dot{u}}_{L^q}+\norm{\nabla G}_{L^q}+\norm{\nabla(Au)^{\bot}}_{L^q}+\norm{\curl u+(Au)^{\bot}}_{L^q(B_{2R}\cap\Omega)})\\
    &\leq C(\norm{\rho\dot{u}}_{L^q}+\norm{\nabla(Au)^{\bot}}_{L^q}+\norm{\curl u}_{L^q(B_{2R}\cap\Omega)}+\norm{(Au)^{\bot}}_{L^q}),
  \end{aligned}
  \end{equation}
  where we have used the support of $A$, and for any integer $k\geq 0$, by \eqref{Hodge-decomposition-exterior-Final-M},
  \begin{equation}\label{curl-transition-estimate2}
    \begin{aligned}
    \norm{\nabla\curl u}_{W^{k+1,q}}&\leq C(\norm{\nabla\times\curl u}_{W^{k+1,q}}+\norm{\curl u+(Au)^{\bot}}_{L^q(B_{2R}\cap\Omega)}+\norm{(Au)^{\bot}}_{W^{k+2,q}})\\
    &\leq C(\norm{\rho\dot{u}}_{W^{k+1,q}}+\norm{\nabla(Au)^{\bot}}_{W^{k+1,q}}+\norm{\curl u}_{L^q(B_{2R}\cap\Omega)}+\norm{(Au)^{\bot}}_{L^q}).
    \end{aligned}
  \end{equation}
  In particular, by \eqref{transition-lem-Hodge1} and the support of $A$, it is easy to check that for any $p\in[2,6]$ and integer $k\geq0$,
  \begin{equation}\label{G-curl-estimate-p1}
    \norm{\nabla G}_{L^p}+\norm{\nabla\curl u}_{L^p}\leq C(\norm{\rho\dot{u}}_{L^p}+\norm{\nabla u}_{L^p}+\norm{\nabla u}_{L^2}),
  \end{equation}
  \begin{equation}\label{G-curl-estimate-p2}
    \norm{\nabla G}_{W^{k+1,p}}\leq C(\norm{\rho\dot{u}}_{W^{k+1,p}}+\norm{\nabla u}_{L^p}+\norm{\nabla u}_{L^2}),
  \end{equation}
  \begin{equation}\label{G-curl-estimate-p3}
    \norm{\nabla\curl u}_{W^{k+1,p}}\leq C(\norm{\rho\dot{u}}_{W^{k+1,p}}+\norm{\nabla u}_{W^{k+1,p}}+\norm{\nabla u}_{L^2}).
  \end{equation}

  Now for any $p\in[2,6]$, we deduce from Gagliardo-Nirenberg inequality \eqref{GN-inequality1} and \eqref{G-curl-estimate-p1} that
  \begin{equation}\label{G-formal-estimate1}
    \begin{aligned}
    \norm{G}_{L^p}&\leq C\norm{G}_{L^2}^{\frac{6-p}{2p}}\norm{\nabla G}_{L^2}^{\frac{3p-6}{2p}}\\
    &\leq C(\norm{\rho\dot{u}}_{L^2}+\norm{\nabla u}_{L^2})^{\frac{3p-6}{2p}}(\norm{\nabla u}_{L^2}+\norm{P-P(\rho_{\infty})}_{L^2})^{\frac{6-p}{2p}}\\
    &\leq C\norm{\rho\dot{u}}_{L^2}^{\frac{3p-6}{2p}}(\norm{\nabla u}_{L^2}+\norm{P-P(\rho_{\infty})}_{L^2})^{\frac{6-p}{2p}}+C(\norm{\nabla u}_{L^2}+\norm{P-P(\rho_{\infty})}_{L^2}),
    \end{aligned}
  \end{equation}
  and similarly from \eqref{curl-transition-estimate1} that
  \begin{equation}\label{curl-formal-estimate1}
    \begin{aligned}
    \norm{\curl u}_{L^p}&\leq C\norm{\curl u}_{L^2}^{\frac{6-p}{2p}}\norm{\nabla\curl u}_{L^2}^{\frac{3p-6}{2p}}\\
    &\leq C(\norm{\rho\dot{u}}_{L^2}+\norm{\nabla u}_{L^2})^{\frac{3p-6}{2p}}\norm{\nabla u}_{L^2}^{\frac{6-p}{2p}}\\
    &\leq C\norm{\rho\dot{u}}_{L^2}^{\frac{3p-6}{2p}}\norm{\nabla u}_{L^2}^{\frac{6-p}{2p}}+C\norm{\nabla u}_{L^2}.
    \end{aligned}
  \end{equation}
  Then, it holds from \eqref{Hodge-decomposition-exterior-Final}, \eqref{G-formal-estimate1} and \eqref{curl-formal-estimate1}  that for $p\in[2,6]$,
  \begin{equation}\label{G-curl-formal-estimate1}
    \begin{aligned}
    \norm{\nabla u}_{L^p}&\leq C(\norm{\div u}_{L^p}+\norm{\curl u}_{L^p}+\norm{\nabla u}_{L^2})\\
    &\leq C(\norm{G}_{L^p}+\norm{P-P(\rho_{\infty})}_{L^p}+\norm{\curl u}_{L^p}+\norm{\nabla u}_{L^2})\\
    &\leq C\norm{\rho\dot{u}}_{L^2}^{\frac{3p-6}{2p}}(\norm{\nabla u}_{L^2}+\norm{P-P(\rho_{\infty})}_{L^2})^{\frac{6-p}{2p}}\\
    &\quad+C(\norm{P-P(\rho_{\infty})}_{L^p}+\norm{P-P(\rho_{\infty})}_{L^2}+\norm{\nabla u}_{L^2}),
    \end{aligned}
  \end{equation}
  and henceforth
  \begin{equation}\label{G-curl-formal-estimate2}
    \begin{aligned}
    &\quad\norm{\nabla G}_{L^p}+\norm{\nabla\curl u}_{L^p}
    \leq C(\norm{\rho\dot{u}}_{L^p}+\norm{\nabla u}_{L^p}+\norm{\nabla u}_{L^2})\\
    &\leq C\norm{\rho\dot{u}}_{L^2}^{\frac{3p-6}{2p}}(\norm{\nabla u}_{L^2}+\norm{P-P(\rho_{\infty})}_{L^2})^{\frac{6-p}{2p}}\\
    &\quad+C(\norm{P-P(\rho_{\infty})}_{L^p}+\norm{P-P(\rho_{\infty})}_{L^2}+\norm{\nabla u}_{L^2}+\norm{\rho\dot{u}}_{L^p})\\
    &\leq C(\norm{\rho\dot{u}}_{L^p}+\norm{\rho\dot{u}}_{L^2}+\norm{\nabla u}_{L^2}+\norm{P-P(\rho_{\infty})}_{L^p}+\norm{P-P(\rho_{\infty})}_{L^2}).
    \end{aligned}
  \end{equation}
  Thus, we complete the proof of Lemma \ref{lem-curl-effective-viscous}.

\end{proof}

\begin{rem}\label{rem-small-A}
  In fact, we need some refined inequalities to deal with $\displaystyle\int_0^T\sigma^3\norm{\nabla u}_{L^4}^4$ in Lemma \ref{lem-control-A1-A2}. Here, we have the modified estimates as follows:
  \begin{equation}\label{curl-G-modified-estimate1}
    \norm{\nabla G}_{L^p}+\norm{\nabla\curl u}_{L^p}\leq C(\norm{\rho\dot{u}}_{L^p}+\norm{\nabla(Au)^{\bot}}_{L^p})\leq C(\norm{\rho\dot{u}}_{L^p}+\norm{A}_{W^{1,6}}(\norm{\nabla u}_{L^2}+\norm{\nabla u}_{L^p}))
  \end{equation}
  for any $p\in[2,3]$.

  Now, we give the estimate on $\norm{\nabla u}_{L^4}^4$ as
  \begin{equation}\label{nabla-u-L^4-estimate1}
    \begin{aligned}
    \norm{\nabla u}_{L^4}^4&\leq C(\norm{G}_{L^4}^4+\norm{\curl u}_{L^4}^4+\norm{P-P(\rho_{\infty})}_{L^4}^4+\norm{u}_{L^4(B_{2R}\cap\Omega)}^4)\\
    &\leq C(\norm{G}_{L^2}\norm{\nabla G}_{L^2}^3+\norm{\curl u}_{L^2}\norm{\nabla\curl u}_{L^2}^3)+C(\norm{P-P(\rho_{\infty})}_{L^4}^4+\norm{\nabla u}_{L^{\frac{8}{3}}}^4)\\
    &\leq C(\norm{\nabla u}_{L^2}+\norm{P-P(\rho_{\infty})}_{L^2})(\norm{\rho\dot{u}}_{L^2}^3+\norm{A}_{W^{1,6}}^3\norm{\nabla u}_{L^2}^3)+C\norm{P-P(\rho_{\infty})}_{L^4}^4\\
    &\quad+C\norm{\rho\dot{u}}_{L^2}^{\frac{3}{2}}(\norm{\nabla u}_{L^2}+\norm{P-P(\rho_{\infty})}_{L^2})^{\frac{5}{2}}+C\norm{A}_{W^{1,6}}^{\frac{3}{2}}(\norm{\nabla u}_{L^2}^4+\norm{P-P(\rho_{\infty})}_{L^2}^4)\\
    &\quad+C\norm{P-P(\rho_{\infty})}_{L^{\frac{8}{3}}}^4,
    \end{aligned}
  \end{equation}
  where we have used \eqref{Hodge-decomposition-exterior-Final-M} (or \eqref{Hodge-decomposition3-exterior1} and \eqref{Hodge-decomposition3-exterior2} directly) instead of \eqref{Hodge-decomposition-exterior-Final} in Lemma \ref{lem-Hodge-decomposition-exterior-F}, the similar argument as in \eqref{transition-lem-Hodge1} and the estimate on $\norm{\nabla u}_{L^{\frac{8}{3}}}$ as
  \begin{equation}\label{new-gradient-u-estimate}
  \begin{aligned}
    &\quad\norm{\nabla u}_{L^{\frac{8}{3}}}\leq C(\norm{\div u}_{L^{\frac{8}{3}}}+\norm{\curl u}_{L^{\frac{8}{3}}})\\
    &\leq C(\norm{G}_{L^{\frac{8}{3}}}+\norm{\curl u}_{L^{\frac{8}{3}}}+\norm{P-P(\rho_{\infty})}_{L^{\frac{8}{3}}})\\
    &\leq C(\norm{G}_{L^2}^{\frac{5}{8}}\norm{\nabla G}_{L^2}^{\frac{3}{8}}+\norm{\curl u}_{L^2}^{\frac{5}{8}}\norm{\nabla\curl u}_{L^2}^{\frac{3}{8}}+\norm{P-P(\rho_{\infty})}_{L^{\frac{8}{3}}})\\
    &\leq C(\norm{\nabla u}_{L^2}+\norm{P-P(\rho_{\infty})}_{L^2})^{\frac{5}{8}}(\norm{\rho\dot{u}}_{L^2}+\norm{A}_{W^{1,6}}\norm{\nabla u}_{L^2})^{\frac{3}{8}}+C\norm{P-P(\rho_{\infty})}_{L^{\frac{8}{3}}}\\
    &\leq C\norm{\rho\dot{u}}_{L^2}^{\frac{3}{8}}(\norm{\nabla u}_{L^2}+\norm{P-P(\rho_{\infty})}_{L^2})^{\frac{5}{8}}+C\norm{A}_{W^{1,6}}^{\frac{3}{8}}(\norm{\nabla u}_{L^2}+\norm{P-P(\rho_{\infty})}_{L^2})\\
    &\quad+C\norm{P-P(\rho_{\infty})}_{L^{\frac{8}{3}}},
  \end{aligned}
  \end{equation}
  due to \eqref{Hodge-decomposition3-exterior1}, Gagliardo-Nirenberg inequality \eqref{GN-inequality1} and \eqref{curl-G-modified-estimate1}.

  Due to Young's inequality, the above inequality \eqref{nabla-u-L^4-estimate1} implies that
  \begin{equation}\label{nabla-u-L^4-estimate2}
  \begin{aligned}
    \norm{\nabla u}_{L^4}^4&\leq C(\norm{\nabla u}_{L^2}+\norm{P-P(\rho_{\infty})}_{L^2})\norm{\rho\dot{u}}_{L^2}^3+C\norm{\rho\dot{u}}_{L^2}^{\frac{3}{2}}\norm{\nabla u}_{L^2}^{\frac{5}{2}}\\
    &\quad+C\norm{A}_{W^{1,6}}^{\frac{3}{2}}\norm{\nabla u}_{L^2}^4+C(\norm{P-P(\rho_{\infty})}_{L^4}^4+\norm{P-P(\rho_{\infty})}_{L^2}^4)
  \end{aligned}
  \end{equation}
  provided
  \begin{equation}\label{small-A-condition}
    \norm{A}_{W^{1,6}}\leq 1.
  \end{equation}
  Note that here the constant $C>0$ only depending on $\mu,\lambda$ and $\Omega$.

  To bound the density in Lemma \ref{lem-rho-bound}, we also give a new estimate on $\norm{\nabla G}_{L^6}$ as
  \begin{equation}\label{nabla-u-L^6-estimate}
    \begin{aligned}
    \norm{\nabla G}_{L^6}&\leq C(\norm{\rho\dot{u}}_{L^6}+\norm{\nabla(Au)^{\bot}}_{L^6})\\
    &\leq C(\norm{\rho\dot{u}}_{L^6}+\norm{A}_{W^{1,6}}\norm{\nabla u}_{L^6}+\norm{A}_{W^{1,\infty}}\norm{\nabla u}_{L^2})\\
    &\leq C\norm{A}_{W^{1,6}}(\norm{\rho\dot{u}}_{L^2}+\norm{A}_{W^{1,6}}\norm{\nabla u}_{L^2}+\norm{P}_{L^6}+\norm{\nabla u}_{L^2})\\
    &\quad+C(\norm{\rho\dot{u}}_{L^6}+\norm{A}_{W^{1,\infty}}\norm{\nabla u}_{L^2})\\
    &\leq C(\norm{\rho\dot{u}}_{L^6}+\norm{A}_{W^{1,6}}\norm{\rho\dot{u}}_{L^2})+C(1+\norm{A}_{W^{1,\infty}})\norm{\nabla u}_{L^2}+C\norm{P}_{L^6}
    \end{aligned}
  \end{equation}
  provided $\norm{A}_{W^{1,6}}\leq 1$.
\end{rem}

In the last, we give the a priori estimate on $\dot{u}$ which will be used later.

\begin{lem}\label{lem-dot-u}
  Let $(\rho,u)$ be a smooth solution of \eqref{Large-CNS-eq} with Navier-slip boundary conditions \eqref{Navier-slip-condition} and far-field condition \eqref{far-field-condition}. Then there exists a constant $C>0$ depending only on $\Omega$ such that
  \begin{equation}\label{dot-u-estimate}
    \norm{\nabla\dot{u}}_{L^2}\leq C(\norm{\div\dot{u}}_{L^2}+\norm{\curl\dot{u}}_{L^2}+\norm{\nabla u}_{L^{\frac{8}{3}}}^2).
  \end{equation}
\end{lem}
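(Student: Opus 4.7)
The material derivative $\dot u$ does not inherit the Navier-slip condition \eqref{Navier-slip-condition} from $u$, so Lemma \ref{lem-Hodge-decomposition-exterior-F} cannot be applied to $\dot u$ directly. My plan is to modify $\dot u$ by an explicit, compactly supported correction $F$ so that $\tilde u := \dot u + F$ satisfies $\tilde u\cdot n=0$ on $\pa\Omega$, apply the sharp exterior-domain Hodge inequality \eqref{Hodge-decomposition3-exterior1} with $q=2\in(1,3)$ to $\tilde u$, and then control $\norm{F}_{H^1}$ by $\norm{\nabla u}_{L^{8/3}}^2$ via Sobolev embedding on the bounded region where $F$ is supported.

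The first step will be to compute the boundary trace of $\dot u\cdot n$ explicitly. Since $u\cdot n=0$ on $\pa\Omega$ is time-independent, $u_t\cdot n|_{\pa\Omega}=0$. Fixing a smooth extension $N$ of $n$ to $\bar\Omega$ with compact support in $B_{2R}\cap\bar\Omega$, the scalar $u\cdot N$ vanishes on $\pa\Omega$ and $u$ is tangent to $\pa\Omega$, so the tangential derivative $(u\cdot\nabla)(u\cdot N)|_{\pa\Omega}$ vanishes. Expanding this identity by the product rule yields
\begin{equation*}
\dot u\cdot n\big|_{\pa\Omega}=(u\cdot\nabla u)\cdot n\big|_{\pa\Omega}=-u^T(\nabla n)u.
\end{equation*}
This suggests choosing $F:=(u^T(\nabla N)u)\,N$, which satisfies $F\cdot n|_{\pa\Omega}=u^T(\nabla n)u$ and hence makes $\tilde u\cdot n=0$ on $\pa\Omega$. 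Applying \eqref{Hodge-decomposition3-exterior1} to $\tilde u$ and using the triangle inequality will then give
\begin{equation*}
\norm{\nabla\dot u}_{L^2}\leq C\norm{\div\dot u}_{L^2}+C\norm{\curl\dot u}_{L^2}+C\norm{F}_{H^1}.
\end{equation*}

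The remaining task is to show $\norm{F}_{H^1}\leq C\norm{\nabla u}_{L^{8/3}}^2$. Since $N$ is smooth and compactly supported in $B_{2R}\cap\bar\Omega$, I have $|F|\leq C|u|^2$ and $|\nabla F|\leq C(|u|^2+|u||\nabla u|)$, both supported in $B_{2R}\cap\Omega$. The homogeneous Sobolev embedding in 3D gives $\norm{u}_{L^{24}(\Omega)}\leq C\norm{\nabla u}_{L^{8/3}(\Omega)}$ (using that $u$ vanishes at infinity), and H\"older's inequality on the bounded set $B_{2R}\cap\Omega$ then gives $\norm{u}_{L^8(B_{2R}\cap\Omega)}\leq C\norm{\nabla u}_{L^{8/3}(\Omega)}$. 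One more application of H\"older yields
\begin{equation*}
\int_{B_{2R}\cap\Omega}|u|^4\leq C\norm{u}_{L^8}^4\leq C\norm{\nabla u}_{L^{8/3}}^4,\qquad \int_{B_{2R}\cap\Omega}|u|^2|\nabla u|^2\leq C\norm{u}_{L^8}^2\norm{\nabla u}_{L^{8/3}}^2\leq C\norm{\nabla u}_{L^{8/3}}^4,
\end{equation*}
which combine to $\norm{F}_{H^1}\leq C\norm{\nabla u}_{L^{8/3}}^2$ and close the argument.

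The hard part will be matching the specific target exponent $8/3$. Because the correction $F$ is quadratic in $u$ with a single derivative appearing in $\nabla F$, a $\norm{\nabla u}_{L^{8/3}}$ factor can only be absorbed by pushing $u$ up to $L^8$, which in 3D forces reliance on the critical Sobolev chain $W^{1,8/3}\hookrightarrow L^{24}\hookrightarrow L^8(B_{2R}\cap\Omega)$; the cleaner estimate \eqref{Hodge-decomposition-exterior-Final} is unavailable here because at $p=2$ it becomes circular, so one must go through \eqref{Hodge-decomposition3-exterior1} instead. Attempting a direct integration by parts of $\int|\nabla\dot u|^2$ without introducing the correction would produce boundary terms involving $\pa_n\dot u$ on $\pa\Omega$, which cannot be controlled by $L^2$-data of $\div\dot u$ and $\curl\dot u$ alone.
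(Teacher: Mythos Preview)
Your proof is correct and follows essentially the same route as the paper: compute the boundary trace $\dot u\cdot n=-u\cdot\nabla n\cdot u$, add a compactly supported correction quadratic in $u$ to restore the condition $(\cdot)\cdot n=0$ on $\pa\Omega$, apply the exterior Hodge inequality \eqref{Hodge-decomposition3-exterior1} at $q=2$, and control the correction by $\norm{\nabla u}_{L^{8/3}}^2$ via Sobolev on the bounded support. The only cosmetic difference is the choice of correction: the paper takes $(u\cdot\nabla n)\times u^{\bot}$ (using that $u=u^{\bot}\times n$ on $\pa\Omega$), whereas you take the more direct scalar-times-normal form $(u^T(\nabla N)u)N$; both have the same normal trace and the same pointwise structure $|F|\leq C|u|^2$, $|\nabla F|\leq C(|u|^2+|u||\nabla u|)$, so the final estimate is identical.
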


\begin{proof}
  From the simple fact that
  \begin{equation*}
    (a\times b)\cdot c=(b\times c)\cdot a=(c\times a)\cdot b,
  \end{equation*}
  we have
  \begin{equation*}
    \dot{u}\cdot n=u\cdot\nabla u\cdot n=-u\cdot\nabla n\cdot u=-u\cdot\nabla n\cdot(u^{\bot}\times n)=-(u\cdot\nabla n)\times u^{\bot}\cdot n \textrm{ on }\pa\Omega.
  \end{equation*}
Therefore, it holds that
  \begin{equation}\label{dot-u-transition1}
    (\dot{u}+(u\cdot\nabla n)\times u^{\bot})\cdot n=0 \textrm{ on }\pa\Omega.
  \end{equation}
 Finally, we deduce from \eqref{Hodge-decomposition3-exterior1} that
  \begin{equation*}
    \begin{aligned}
    \norm{\nabla\dot{u}}_{L^2}&\leq C(\norm{\div\dot{u}}_{L^2}+\norm{\curl\dot{u}}_{L^2}+\norm{\nabla((u\cdot\nabla n)\times u^{\bot})}_{L^2})\\
    &\leq C(\norm{\div\dot{u}}_{L^2}+\norm{\curl\dot{u}}_{L^2}+\norm{\nabla u}_{L^{\frac{8}{3}}}^2),
    \end{aligned}
  \end{equation*}
  where we have also used the support of $n$ and the similar argument as in \eqref{transition-lem-Hodge1}.
\end{proof}

\section{Proof of Theorem \ref{thm-global-CNS-exterior}}\label{sect-proof-thm}

\subsection{Lower-order a priori estimates}
In this subsection, we are devoted to establishing some necessary a priori estimates for smooth solution $(\rho,u)$ to the problem \eqref{Large-CNS-eq}-\eqref{far-field-condition} on $\Omega\times(0,T]$ for some fixed time $T>0$. Setting $\sigma=\sigma(t)=\min\{1,t\}$, we define
\begin{equation}\label{defi-a-priori-estimate}
  \begin{cases}
    A_1(T)=\sup\limits_{t\in[0,T]}\sigma\displaystyle\int|\nabla u|^2+\int_0^T\displaystyle\int\sigma\rho|\dot{u}|^2, \\
    A_2(T)=\sup\limits_{t\in[0,T]}\sigma^3\displaystyle\int\rho|\dot{u}|^2+\int_0^T\displaystyle\int\sigma^3|\nabla\dot{u}|^2, \\
    A_3(T)=\sup\limits_{t\in[0,T]}\displaystyle\int\rho|u|^3.
  \end{cases}
\end{equation}
Since for the large adiabatic exponent $\gamma>1$, the initial energy $E_0$ in \eqref{defi-total-energy} correspondingly becomes small from the smallness of $\mathcal{E}_0$ in \eqref{small-condition}. Therefoe, without loss of generality, we assume that
\begin{equation}\label{assume-condition1}
  \epsilon\leq 1, \quad 1<\gamma\leq\frac{3}{2}.
\end{equation}

Then, we give the following proposition which guarantees the existence of a global classical solution of \eqref{Large-CNS-eq}-\eqref{Navier-slip-condition}.

\begin{pro}\label{prop-a-priori-estimate}
  Assume that the initial data satisfy \eqref{initial-data1}, \eqref{initial-data2} and \eqref{compatibility-condition}. If the solution $(\rho,u)$ to \eqref{Large-CNS-eq}-\eqref{far-field-condition} on $\Omega\times(0,T]$ satisfy
  \begin{equation}\label{a-priori-assumption}
    A_1(T)\leq2\mathcal{E}_0^{\frac{3}{8}},\quad A_2(T)\leq2\mathcal{E}_0^{\frac{1}{2}},\quad A_3(\sigma(T))\leq2\mathcal{E}_0^{\frac{1}{4}},\quad 0\leq\rho\leq2\tilde{\rho},
  \end{equation}
  then the following estimates hold:
  \begin{equation}\label{a-priori-goal}
    A_1(T)\leq\mathcal{E}_0^{\frac{3}{8}},\quad A_2(T)\leq\mathcal{E}_0^{\frac{1}{2}},\quad A_3(\sigma(T))\leq\mathcal{E}_0^{\frac{1}{4}},\quad 0\leq\rho\leq\frac{7}{4}\tilde{\rho},
  \end{equation}
  provided $\mathcal{E}_0\leq\epsilon$, where $\epsilon>0$ is a small constant depending on $\mu, \lambda, a, \tilde{\rho}, \Omega, M, E_0$, but independent of $\gamma-1$ and $t$ (see \eqref{small-assumption1}, \eqref{small-assumption2}, \eqref{small-assumption3} and \eqref{small-assumption4}), precisely characterized as
  \begin{equation*}
 \begin{aligned}
   \epsilon&=\min\left\{1,(4C(\tilde{\rho}))^{-12},(C(\tilde{\rho},M))^{-2},
   (4C(\tilde{\rho}))^{-2},(3C(\tilde{\rho}))^{-16},(3C(\tilde{\rho},M)(E_0+1))^{-2},(1+E_0)^{-\frac{16}{3}},\right.\\
   &\qquad\left.(4C(\tilde{\rho}))^{-\frac{128}{3}}E_0^{-\frac{56}{3}}, (4C(\tilde{\rho},M))^{-\frac{8}{5}}, (4C(\tilde{\rho})(1+E_0))^{-8},\left(\frac{\tilde{\rho}}{2C(\tilde{\rho},M)}\right)^{-\frac{32}{3}},\left(\frac{\tilde{\rho}}{4C(\tilde{\rho})(1+E_0)}\right)^8\right\}
   \end{aligned}
 \end{equation*}
 and the matrix $A$ has certain smallness as
 \begin{equation*}
    \norm{A}_{W^{1,6}}\leq\min\left\{1,(3CE_0)^{-\frac{3}{4}}\mathcal{E}_0^{\frac{3}{32}}, E_0^{-\frac{2}{3}}\mathcal{E}_0^{\frac{7}{24}}, (4CE_0)^{-\frac{8}{9}}\mathcal{E}_0^{\frac{1}{6}}\right\},\quad\norm{A}_{W^{1,\infty}}\leq\mathcal{E}_0^{-\frac{1}{8}},
  \end{equation*}
  which can be found in \eqref{small-A-assumption1} and \eqref{small-A-assumption2}.
\end{pro}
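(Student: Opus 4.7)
The plan is a bootstrap/continuity argument: under the hypothesis \eqref{a-priori-assumption}, I will improve each of the four quantities $A_1(T)$, $A_2(T)$, $A_3(\sigma(T))$, and $\norm{\rho}_{L^{\infty}}$ by a definite factor, by taking $\mathcal{E}_0 \leq \epsilon$ and the matrix $A$ small as specified. The exponents $\tfrac{3}{8}, \tfrac{1}{2}, \tfrac{1}{4}$ are dictated by the informal balance sketched in \eqref{intro-A1-control}--\eqref{intro-A2-control}, which forces $A_1(T)^{3/2} \ll A_2(T) \ll A_1(T)$ and hence the natural closure $A_1(T) \sim A_2(T)^{3/4} \sim \mathcal{E}_0^{3/8}$. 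As the starting point, I will record the basic energy identity obtained by testing $\eqref{Large-CNS-eq}_2$ with $u$: using that $A \geq 0$, one obtains $\sup_t (\tfrac{1}{2}\norm{\sqrt{\rho}\,u}_{L^2}^2 + \norm{G(\rho)}_{L^1}) + \mu \int_0^T \norm{\nabla u}_{L^2}^2 \leq E_0$. The crucial feature is that $\int P$ inherits a factor $(\gamma-1)$ through $\norm{G(\rho)}_{L^1} \sim (\gamma-1)^{-1} \int P$, so $\int P$ is of size $\mathcal{E}_0$, whereas the viscous dissipation $\int_0^T \norm{\nabla u}_{L^2}^2$ is only bounded by $E_0$.

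To close $A_1(T)$ I will test the momentum equation against $\sigma \dot u$; the boundary contribution $\mu \int_{\pa\Omega} (\curl u \times n) \cdot \dot u = -\mu \int_{\pa\Omega} A u \cdot \dot u$ produced by integration by parts is controlled by the trace inequality \eqref{transition-lem-boundary1} together with the smallness of $\norm{A}_{W^{1,6}}$. To close $A_2(T)$ I will apply the material derivative $\pa_t + u \cdot \nabla$ to $\eqref{Large-CNS-eq}_2$, test with $\sigma^3 \dot u$, and use Lemmas \ref{lem-curl-effective-viscous} and \ref{lem-dot-u} to convert $\norm{\nabla \dot u}_{L^2}$ into quantities already present on the left. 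The cubic spatial piece $\int_{\sigma(T)}^T \int |\nabla u|^3$ arising in \eqref{intro-A1-control} will be interpolated between $\norm{\nabla u}_{L^2}$ and $\norm{\nabla u}_{L^4}$, where the latter is handled by the refined Hodge bound \eqref{nabla-u-L^4-estimate2}; this introduces a bad term $\int_0^T \sigma^3 \norm{P - P(\rho_\infty)}_{L^2}^4$ that must be absorbed by an auxiliary pressure dissipation obtained through testing $\eqref{Large-CNS-eq}_2$ against the Bogovskii corrector $\mathcal{B}[P - P(\rho_\infty)]$, as in \eqref{intro-P-L^2-estimate}.

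For $A_3(\sigma(T))$ I will run a short-time argument on $[0, \sigma(T)]$: testing $\eqref{Large-CNS-eq}_2$ against $3 |u| u$ and applying Sobolev embedding with the already-improved $A_1$ bound delivers $A_3(\sigma(T)) \leq \mathcal{E}_0^{1/4}$, using that $\sigma(T) \leq 1$ makes $\int_0^{\sigma(T)} \norm{\nabla u}_{L^2}^2$ small once $\mathcal{E}_0$ is small. For the density bound I will integrate the continuity equation along the Lagrangian flow $\phi(x,t)$ of $u$, writing
\begin{equation*}
(2\mu + \lambda)\, \tfrac{d}{dt} \log \rho + \bigl( P(\rho) - P(\rho_\infty) \bigr) = -G,
\end{equation*}
and apply Zlotnik's inequality (Lemma \ref{lem-Zlotnik-inequality}) with $g(\zeta) = -(a e^{\gamma \zeta} - P(\rho_\infty))/(2\mu+\lambda)$, which satisfies $g(\infty) = -\infty$. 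The oscillation of $b(t) = -\int_0^t G(\phi(s), s)\, ds$ is split on $[0, \sigma(T)]$ and $[\sigma(T), T]$ and bounded via the $L^\infty_t L^6$ estimate \eqref{nabla-u-L^6-estimate} on $\nabla G$, combined with Gagliardo--Nirenberg and the already-sharpened bounds on $A_1, A_2, A_3$; choosing $\epsilon$ small pushes the $N_0$ increment in \eqref{Zlotnik-condition1} below $\tilde\rho/4$, which is exactly what $\rho \leq \tfrac{7}{4}\tilde\rho$ demands.

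The principal obstacle I anticipate is controlling $\int_0^T \sigma^3 \norm{\nabla u}_{L^4}^4$ in \eqref{intro-A2-control}. Because the exterior-domain Hodge decomposition \eqref{Hodge-decomposition-exterior-Final} produces a remainder $\norm{\nabla u}_{L^2}$ whose time integral is only bounded (not small), a naive estimate yields a term $\int_0^T \sigma^3 \norm{\nabla u}_{L^2}^4$ that cannot be absorbed into $A_2(T)$. The remedy, as outlined in Remark \ref{rem-small-A}, is to split $\norm{\nabla u}_{L^4}$ through the intermediate $L^{8/3}$ and $L^4$ Hodge bounds, pay the price of a $\norm{A}_{W^{1,6}}^{3/2} \norm{\nabla u}_{L^2}^4$ contribution that is absorbed by the smallness of $A$, and close the remaining $\norm{P - P(\rho_\infty)}_{L^2}^4$ via \eqref{intro-P-L^2-estimate}. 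This last step succeeds only under $\rho_\infty = 0$, since both $\int \rho u \cdot \mathcal{B}[P - P(\rho_\infty)]$ and $\norm{\rho}_{L^{3/2}} \norm{\nabla u}_{L^2}^2$ require $\rho \in L^{3/2}$; this is the structural reason for the restriction $\rho_\infty = 0$ in Theorem \ref{thm-global-CNS-exterior}.
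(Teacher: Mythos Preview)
Your proposal is correct and follows essentially the same route as the paper, which packages the four improvements into a chain of lemmas (Lemmas~\ref{lem-essential-energy}--\ref{lem-rho-bound}). Two points deserve sharpening. First, closing $A_3(\sigma(T))$ and the short-time piece $A_1(\sigma(T))$ cannot rely on the ``already-improved $A_1$ bound'' alone, because $A_1$ carries the weight $\sigma$ which degenerates at $t=0$; the paper inserts an intermediate short-time step (Lemma~\ref{lem-small-time-estimate}) that uses the a~priori assumption on $A_3$ to obtain the \emph{unweighted} bounds $\sup_{[0,\sigma(T)]}\norm{\nabla u}_{L^2}^2+\int_0^{\sigma(T)}\norm{\sqrt{\rho}\,\dot u}_{L^2}^2\le C(\tilde\rho,M)$, and this is precisely where $M=\norm{\nabla u_0}_{L^2}$ enters the threshold $\epsilon$. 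Second, for the density bound the paper applies Zlotnik's inequality directly to $\rho$ (with $g(\rho)=-\rho P(\rho)/(2\mu+\lambda)$ and $b'(t)=-\rho G/(2\mu+\lambda)$) rather than to $\log\rho$; this avoids the technical issue that $\log\rho\notin W^{1,1}$ in the presence of vacuum.
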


\begin{proof}
  Proposition \ref{prop-a-priori-estimate} can be directly derived from Lemma \ref{lem-essential-energy}-\ref{lem-rho-bound} below.
\end{proof}

The first lemma is concerned with standard energy estimate for $(\rho,u)$.

\begin{lem}\label{lem-essential-energy}
  Let $(\rho,u)$ be a smooth solution of \eqref{Large-CNS-eq}-\eqref{far-field-condition} on $\Omega\times(0,T]$. Then, it holds that
  \begin{equation}\label{essential-energy}
    \sup_{t\in[0,T]}\int(\frac{1}{2}\rho|u|^2+\frac{1}{\gamma-1}P(\rho))+\int_0^T\int(\mu|\curl u|^2+(2\mu+\lambda)|\div u|^2)\leq E_0.
  \end{equation}
\end{lem}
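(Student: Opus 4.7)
The plan is a classical energy estimate adapted to the slip boundary conditions on the exterior domain: I would test the momentum equation $\eqref{Large-CNS-eq}_2$ with $u$ and integrate over $\Omega$, then combine with the continuity equation $\eqref{Large-CNS-eq}_1$ to convert each piece into the appropriate time derivative or dissipation.

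First, the combination $\int(\partial_t(\rho u)\cdot u + \div(\rho u\otimes u)\cdot u)$ reduces, via the continuity equation, to $\frac{d}{dt}\int\frac{1}{2}\rho|u|^2$. For the pressure term, I would use $u\cdot n=0$ on $\partial\Omega$ and $u\to 0$ at infinity to integrate by parts: $\int\nabla P\cdot u = -\int P\div u$. The key identity to produce the internal energy is derived by applying $P'(\rho)$ to $\eqref{Large-CNS-eq}_1$, which yields $\partial_t P + \div(Pu) + (\gamma-1)P\div u = 0$. Since the hypothesis of Theorem \ref{thm-global-CNS-exterior} imposes $\rho_\infty = 0$, one has $P\to 0$ at infinity, and spatial integration gives $\frac{1}{\gamma-1}\frac{d}{dt}\int P = -\int P\div u$. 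Combined with the earlier identity, this yields $\int \nabla P\cdot u = \frac{1}{\gamma-1}\frac{d}{dt}\int P$. Finally, noting that $G(\rho_0)=P(\rho_0)/(\gamma-1)$ when $\rho_\infty=0$, this contribution matches the $\frac{1}{\gamma-1}P(\rho)$ term in the statement and $E_0$ on the right-hand side.

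Second, for the viscous terms, I would use $-\Delta u = \curl(\curl u) - \nabla\div u$ to rewrite the operator as $\mu\,\curl(\curl u) - (2\mu+\lambda)\nabla\div u$. Integration by parts against $u$ produces the dissipation $\mu\int|\curl u|^2 + (2\mu+\lambda)\int|\div u|^2$ together with boundary contributions. The $\div u$ boundary term vanishes because $u\cdot n=0$ on $\partial\Omega$. The remaining vorticity boundary term is, by the triple-product identity,
\begin{equation*}
\mu\int_{\partial\Omega}(\curl u\times u)\cdot n\,dS=-\mu\int_{\partial\Omega}(\curl u\times n)\cdot u\,dS=\mu\int_{\partial\Omega}(An)\cdot u\,dS,
\end{equation*}
using the slip condition \eqref{Navier-slip-condition}.

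The main obstacle is ensuring this boundary contribution has the correct sign so that it can be dropped to yield an inequality rather than equality. Since $A$ is symmetric and positive semi-definite and the slip condition is geometrically compatible with $u\cdot n=0$, the standard Navier-slip analysis (following the structure used in \cite{Cai-Li-Lv2021} and \cite{Xu-Zhang2025}) reduces this term to a non-negative quadratic form in the tangential trace of $u$, so it may be discarded. Combining everything gives the differential identity
\begin{equation*}
\frac{d}{dt}\int\Bigl(\tfrac{1}{2}\rho|u|^2+\tfrac{1}{\gamma-1}P(\rho)\Bigr)+\mu\int|\curl u|^2+(2\mu+\lambda)\int|\div u|^2\le 0,
\end{equation*}
and integration from $0$ to any $t\in[0,T]$ yields \eqref{essential-energy}.
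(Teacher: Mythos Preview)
Your proposal is correct and follows essentially the same approach as the paper: test the momentum equation against $u$, combine with the pressure equation $P_t+\div(Pu)+(\gamma-1)P\div u=0$, and use $-\Delta u=\curl\curl u-\nabla\div u$ together with the slip condition to produce the energy identity with a nonnegative boundary contribution. One clarification: the slip condition \eqref{Navier-slip-condition} should read $\curl u\times n=-Au$ (the ``$An$'' in the statement is a typo, as the paper's own computation \eqref{essential-energy-estimate} confirms), so your boundary term is already the quadratic form $\mu\int_{\partial\Omega}Au\cdot u\ge 0$ directly from the positive semi-definiteness of $A$, with no further reduction required.
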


\begin{proof}
  Rewriting $\eqref{Large-CNS-eq}_1$ as
  \begin{equation}\label{Pressure-eq}
    P_t+\div(uP)+(\gamma-1)P\div u=0,
  \end{equation}
  and integrating over $\Omega$, then adding it to the $L^2$-inner product of $\eqref{Large-CNS-eq}_2$ with $u$ yields that
  \begin{equation}\label{essential-energy-estimate}
    \frac{d}{dt}\int(\frac{1}{2}\rho|u|^2+\frac{1}{\gamma-1}P(\rho))+\int(\mu|\curl u|^2+(2\mu+\lambda)|\div u|^2)+\mu\int_{\pa\Omega}Au\cdot u=0,
  \end{equation}
  where we have used the fact that
  \begin{equation*}
    \Delta u=\nabla\div u-\nabla\times\curl u.
  \end{equation*}
  Integrating \eqref{essential-energy-estimate} on $[0,T]$ gives the inequality \eqref{essential-energy} immediately.
\end{proof}

\begin{rem}\label{rem-pressure-energy}
  Our analysis reveals three fundamental energy-pressure relationships with geometric constraints:

  \textbf{Energy-pressure duality:} The essential energy estimate satisfies
  \begin{equation*}
    E(t) \triangleq \int\left(\frac{1}{2}\rho|u|^2 + G(\rho)\right) \leq E_0
  \end{equation*}
  with pressure-energy connection
  \begin{equation*}
    \norm{P-P(\rho_{\infty})}_{L^2}^2 \leq C(\tilde{\rho})(\gamma-1)\int G(\rho) \leq C(\tilde{\rho})\mathcal{E}_0
  \end{equation*}
  valid under the density threshold $\rho_{\infty} \leq 3^{-(\gamma-1)^{-1}}$ for $\gamma\in(1,\frac{3}{2}]$ (see Remark \ref{rem-relationship}).

  \textbf{Geometric obstruction:} While matching Zhu's whole-space result \cite[Lemma 3.2]{Zhu2024} formally, our exterior domain framework introduces critical differences (see Remark \ref{rem-small-A}):
  \begin{itemize}
    \item The Hodge decomposition \eqref{Hodge-decomposition-exterior-Final} generates the rogue term $\int_0^T\sigma^3\norm{P}_{L^2}^4$;
    \item Dissipation control requires strengthened hypothesis $\rho_{\infty}=0$ instead of $\rho_{\infty}\geq0$.
  \end{itemize}

  \textbf{Vanishing far-field simplification:} Under $\rho_{\infty}=0$, we obtain:
  \begin{itemize}
    \item Pressure-energy equivalence: $G(\rho) = \frac{1}{\gamma-1}P(\rho)$;
    \item Uniform pressure bound: $\displaystyle\sup_{t\in[0,T]}\int P \leq (\gamma-1)E_0 \leq \mathcal{E}_0$.
  \end{itemize}
  This geometric reduction enables us to circumvent the uncontrolled term $\int_0^T\sigma^3\norm{P}_{L^2}^4$ while maintaining compatibility with whole-space energy estimates.
\end{rem}

The following a priori estimate is essential to close the a priori assumption \eqref{a-priori-assumption}.
\begin{lem}\label{lem-essential-estimate}
  Under the conditions of Proposition \ref{prop-a-priori-estimate}, it holds that
  \begin{equation}\label{essential-estimate}
    \sup_{t\in[0,\sigma(T)]}\int\rho|u|^2+\int_0^{\sigma(T)}\int|\nabla u|^2\leq C(\tilde{\rho})\mathcal{E}_0.
  \end{equation}
\end{lem}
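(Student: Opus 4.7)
The goal is an $\mathcal{E}_0$-type bound for the kinetic energy and the dissipation on the short interval $[0,\sigma(T)]$. The essential energy identity from Lemma~\ref{lem-essential-energy} alone only delivers $\int_0^T\norm{\nabla u}_{L^2}^2\leq CE_0$, which is useless because $E_0$ is allowed to be large; the smallness must come from $\mathcal{E}_0$ and the restriction $\sigma(T)\leq 1$. I would therefore repeat the computation that produces \eqref{essential-energy-estimate}, but stop \emph{before} combining with the pressure conservation law. Dotting $\eqref{Large-CNS-eq}_2$ with $u$, using $\eqref{Large-CNS-eq}_1$, and integrating by parts with $u\cdot n=0$ and the slip condition (which, exactly as in the derivation of \eqref{essential-energy-estimate}, produces the nonnegative boundary contribution $\mu\int_{\pa\Omega}Au\cdot u$), yields
\begin{equation*}
  \frac{d}{dt}\int\frac{1}{2}\rho|u|^2+\mu\norm{\curl u}_{L^2}^2+(2\mu+\lambda)\norm{\div u}_{L^2}^2+\mu\int_{\pa\Omega}Au\cdot u=\int P\,\div u,
\end{equation*}
where the RHS comes from $-\int\nabla P\cdot u=\int P\,\div u$ after using $u\cdot n|_{\pa\Omega}=0$.

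Young's inequality gives
\begin{equation*}
  \int P\,\div u\leq\frac{2\mu+\lambda}{2}\norm{\div u}_{L^2}^2+\frac{1}{2(2\mu+\lambda)}\norm{P}_{L^2}^2,
\end{equation*}
so the matter reduces to the uniform bound $\norm{P(\cdot,t)}_{L^2}^2\leq C(\tilde{\rho})\mathcal{E}_0$. This is the step where the hypothesis $\rho_{\infty}=0$ enters (cf.\ Remark~\ref{rem-pressure-energy}): when $\rho_{\infty}=0$ a direct computation yields $G(\rho)=P(\rho)/(\gamma-1)$, so Lemma~\ref{lem-essential-energy} upgrades to $\sup_t\int P\leq(\gamma-1)E_0\leq\mathcal{E}_0$; combined with the a priori pointwise bound $\rho\leq 2\tilde{\rho}$ (which gives $P\leq a(2\tilde{\rho})^{\gamma}\leq C(\tilde{\rho})$) one obtains $\norm{P}_{L^2}^2\leq\norm{P}_{L^{\infty}}\norm{P}_{L^1}\leq C(\tilde{\rho})\mathcal{E}_0$.

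Absorbing the $\norm{\div u}_{L^2}^2$ contribution into the left-hand side, discarding the nonnegative boundary term, integrating on $[0,\sigma(T)]$, and using $\int\frac{1}{2}\rho_0|u_0|^2\leq\mathcal{E}_0$ together with $\sigma(T)\leq 1$, I would arrive at
\begin{equation*}
  \sup_{t\in[0,\sigma(T)]}\int\rho|u|^2+\int_0^{\sigma(T)}\int\lb|\curl u|^2+|\div u|^2\rb\leq C(\tilde{\rho})\mathcal{E}_0.
\end{equation*}
To finish I would promote the div-curl dissipation to a gradient estimate: since $u\cdot n=0$ on $\pa\Omega$ and $u(x,t)\to 0$ as $|x|\to\infty$, the Hodge-type inequality \eqref{Hodge-decomposition3-exterior1} of Lemma~\ref{lem-Hodge-decomposition-exterior} with $q=2$ yields $\norm{\nabla u}_{L^2}^2\leq C(\norm{\div u}_{L^2}^2+\norm{\curl u}_{L^2}^2)$, which delivers \eqref{essential-estimate}. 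I do not anticipate any real obstacle here; the only delicate point is the pressure bound $\norm{P}_{L^2}^2\lesssim\mathcal{E}_0$, and it is precisely this step that is responsible for the structural assumption $\rho_{\infty}=0$ stressed in Remark~\ref{rem-pressure-energy}.
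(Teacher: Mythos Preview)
Your proposal is correct and follows essentially the same route as the paper: dot the momentum equation with $u$ to obtain the kinetic-energy identity with the $\int P\,\div u$ source, control this via Cauchy/Young and the pointwise bound $\norm{P}_{L^2}^2\leq C(\tilde\rho)\norm{P}_{L^1}\leq C(\tilde\rho)(\gamma-1)E_0$, integrate on $[0,\sigma(T)]$ using $\sigma(T)\leq 1$, and upgrade the div-curl dissipation to $\norm{\nabla u}_{L^2}^2$ via \eqref{Hodge-decomposition3-exterior1}. The only cosmetic difference is that the paper integrates in time first and then applies Cauchy, whereas you apply Young pointwise in time; the substance is identical.
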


\begin{proof}
  Taking $L^2$-inner product of $\eqref{Large-CNS-eq}_2$ with $u$, it follows from integration by parts that
  \begin{equation}\label{essential-estimate1}
    \frac{d}{dt}\int\frac{1}{2}\rho|u|^2+\int(\mu|\curl u|^2+(2\mu+\lambda)|\div u|^2)+\mu\int_{\pa\Omega}Au\cdot u=\int P\div u.
  \end{equation}
  Integrating \eqref{essential-estimate1} over $[0,\sigma(T)]$, and using Cauchy's inequality and \eqref{essential-energy}, we have
  \begin{equation}\label{essential-estimate2}
    \begin{aligned}
    &\quad\sup_{t\in[0,\sigma(T)]}\int\frac{1}{2}\rho|u|^2+\int_0^{\sigma(T)}\int(\mu|\curl u|^2+\frac{1}{2}(2\mu+\lambda)|\div u|^2)\\
    &\leq\int\frac{1}{2}\rho_0|u_0|^2+C\int_0^{\sigma(T)}\int|P|^2\leq \int\frac{1}{2}\rho_0|u_0|^2+C(\tilde{\rho})\int P\\
    &\leq\frac{1}{2}\rho_0|u_0|^2+C(\tilde{\rho})(\gamma-1)E_0\leq C(\tilde{\rho})\mathcal{E}_0,
    \end{aligned}
  \end{equation}
  which together with \eqref{Hodge-decomposition3-exterior1} implies \eqref{essential-estimate}.
\end{proof}

The next lemma gives the estimates on $A_1(T)$ and $A_2(T)$.
\begin{lem}\label{lem-A1-A2-control}
  Under the conditions of Proposition \ref{prop-a-priori-estimate}, it holds that
  \begin{equation}\label{A1-control}
    A_1(T)\leq C(\tilde{\rho})\mathcal{E}_0+C(\tilde{\rho})A_1^{\frac{3}{2}}(T)+C\norm{A}_{W^{1,6}}^{\frac{3}{2}}A_1^{\frac{1}{2}}(T)E_0+C\int_0^T\int\sigma(P|\nabla u|^2+|\nabla u|^3),
  \end{equation}
  \begin{equation}\label{A2-control}
    \begin{aligned}
     A_2(T)&\leq C(\tilde{\rho})\mathcal{E}_0+CA_1^{\frac{3}{2}}(T)+CA_1(\sigma(T))+C(\tilde{\rho})(A_1^2(T)+A_1^{\frac{5}{3}}(T)E_0^{\frac{1}{3}}+A_1^2(T)E_0^{\frac{1}{2}})\\
    &\quad+C\int_0^T\sigma^3(\norm{\nabla u}_{L^4}^4+\norm{P|\nabla u|}_{L^2}^2+\norm{\nabla u}_{L^{\frac{8}{3}}}^4+\norm{A}_{W^{1,6}}^{\frac{4}{3}}\norm{\nabla u}_{L^2}^4),
    \end{aligned}
  \end{equation}
  provided $\mathcal{E}_0\leq\epsilon_1=1$ and $\norm{A}_{W^{1,6}}\leq \tilde{\epsilon}_1=1$.
\end{lem}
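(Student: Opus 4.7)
The plan is to derive \eqref{A1-control} and \eqref{A2-control} via two coupled higher-order energy estimates obtained by multiplying the momentum equation by $\sigma\dot u$ and its time-differentiated form by $\sigma^3\dot u$. Throughout I will use the decomposition $\dot u = u_t + u\cdot\nabla u$, the reformulation $\rho\dot u = \nabla G - \mu\nabla\times\curl u$ from \eqref{momentum-equation}, the vorticity--effective flux bounds in Lemma \ref{lem-curl-effective-viscous}, the Hodge-type decomposition \eqref{Hodge-decomposition-exterior-Final}, the lower-order estimates in Lemma \ref{lem-essential-energy} and Lemma \ref{lem-essential-estimate}, and the density bound $0\le\rho\le 2\tilde\rho$ from \eqref{a-priori-assumption}.

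For \eqref{A1-control}, I would test $\eqref{Large-CNS-eq}_2$ against $\sigma\dot u$ and integrate over $\Omega\times[0,T]$. The diffusion terms integrated by parts produce the key identity $\frac{d}{dt}\bigl[\sigma\int(\tfrac{2\mu+\lambda}{2}(\div u)^2+\tfrac{\mu}{2}|\curl u|^2)\bigr]$, modulo $\sigma'$-terms and boundary contributions. The $\sigma'$-terms on the initial layer $[0,\sigma(T)]$ are absorbed into $C(\tilde\rho)\mathcal E_0$ via Lemma \ref{lem-essential-estimate}, together with the initial-data bound $\|\nabla u_0\|_{L^2}\le M$. The convective part of $\dot u$ generates the cubic term $\int_0^T\int\sigma|\nabla u|^3$ directly, the pressure--velocity coupling produces $\int_0^T\int\sigma P|\nabla u|^2$ after one integration by parts and use of the pressure equation \eqref{Pressure-eq}, and mixed terms of the form $|\nabla u|^2|\dot u|$ estimated by H\"older's inequality with the Gagliardo--Nirenberg inequality \eqref{GN-inequality1} and \eqref{curl-effective-estimate1}--\eqref{curl-effective-estimate4} yield the self-coupling $C(\tilde\rho)A_1^{3/2}(T)$. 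The Navier-slip boundary condition $\curl u\times n=-An$ generates a boundary integral over $\partial\Omega$ which, after extending $n$ and $A$ as compactly supported vectors on $\bar\Omega$ (as in \eqref{defi-vertical}) and applying the trace inequality \eqref{transition-lem-boundary1}, contributes $C\|A\|_{W^{1,6}}^{3/2}A_1^{1/2}(T)E_0$.

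For \eqref{A2-control}, I would apply the material operator $\partial_t+\div(u\,\cdot)$ to $\eqref{Large-CNS-eq}_2$, obtaining an evolution equation for $\rho\dot u$, and then test against $\sigma^3\dot u$. This produces the left-hand side $\sup_t\sigma^3\int\rho|\dot u|^2+\int_0^T\sigma^3\int|\nabla\dot u|^2$ (the latter after invoking Lemma \ref{lem-dot-u} to replace $\|\nabla\dot u\|_{L^2}^2$ by $\|\div\dot u\|_{L^2}^2+\|\curl\dot u\|_{L^2}^2+\|\nabla u\|_{L^{8/3}}^4$, which is the origin of the term $\sigma^3\|\nabla u\|_{L^{8/3}}^4$). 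The $\sigma'$-contributions combined with Lemma \ref{lem-essential-estimate} supply the terms $C A_1(\sigma(T))$ and $C A_1^{3/2}(T)$. Convective commutators of the form $\int\rho u\cdot\nabla u\cdot\nabla\dot u$ and $\int\rho(\dot u\cdot\nabla u)\cdot u$ generate, after repeated use of H\"older, $\eqref{GN-inequality1}$ and $\eqref{curl-effective-estimate4}$, the terms $C(\tilde\rho)(A_1^2(T)+A_1^{5/3}(T)E_0^{1/3}+A_1^2(T)E_0^{1/2})$. Differentiating $P$ in time via \eqref{Pressure-eq} produces the coupling $\sigma^3\|P|\nabla u|\|_{L^2}^2$, and the standard nonlinear diffusion commutator yields $\sigma^3\|\nabla u\|_{L^4}^4$.

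The main obstacle will be the boundary term \eqref{boundary-effect}. Differentiating the Navier-slip condition gives $\curl u_t\times n = -(Au)_t$ on $\partial\Omega$, so the integral $\int_{\partial\Omega}(\curl u_t\times n)\cdot\dot u\,dS$ reduces, after the extension of $n$ and $A$ as compactly supported functions and integration by parts in space, to volume integrals involving $\nabla A$, $A$, $u$ and $\dot u$. In the exterior-domain setting the Hodge estimate \eqref{Hodge-decomposition-exterior-Final} introduces an irreducible $\|\nabla u\|_{L^2}$ contribution that cannot be absorbed by $\|\rho\dot u\|_{L^2}$ alone, and this produces the quartic term $\sigma^3\|A\|_{W^{1,6}}^{4/3}\|\nabla u\|_{L^2}^4$ in \eqref{A2-control} together with $C\|A\|_{W^{1,6}}^{3/2}A_1^{1/2}(T)E_0$ in \eqml{A1-control}. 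These terms dictate the smallness assumption $\|A\|_{W^{1,6}}\le 1$ in the hypotheses and will only be fully absorbed in subsequent lemmas where the additional smallness of $\|A\|_{W^{1,6}}$ is exploited.
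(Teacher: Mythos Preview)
Your overall scheme matches the paper: multiply \eqref{Large-CNS-eq}$_2$ by $\sigma\dot u$ for \eqref{A1-control}, and apply $\partial_t+\div(u\cdot)$ then test against $\sigma^3\dot u$ for \eqref{A2-control}, with Lemma \ref{lem-dot-u} supplying the $\|\nabla u\|_{L^{8/3}}^4$ term. Your identification of the $\curl u_t\times n$ boundary term and its reduction via extending $A,n$ as compactly supported functions is also correct.

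However, you miss a second, equally important boundary contribution. Integrating the $\nabla\div u$ and $\nabla P$ terms by parts against $\dot u$ produces $\int_{\partial\Omega}G\,\dot u\cdot n$ in the $A_1$ step and $\int_{\partial\Omega}G_t\,\dot u\cdot n$ in the $A_2$ step, since $\dot u\cdot n = u\cdot\nabla u\cdot n\ne 0$ on $\partial\Omega$. The paper handles these via the geometric identity $u\cdot\nabla u\cdot n = -u\cdot\nabla n\cdot u$ on $\partial\Omega$ (a consequence of $u\cdot n=0$). For $A_1$ this gives $-\int_{\partial\Omega}\sigma G\,(u\cdot\nabla n\cdot u)$, estimated by trace inequalities and \eqref{curl-G-modified-estimate1}; it is precisely this term, via $\sigma\|\rho\dot u\|_{L^2}^2\|\nabla u\|_{L^2}$ on $[\sigma(T),T]$, that produces $C(\tilde\rho)A_1^{3/2}(T)$---not the bulk ``mixed $|\nabla u|^2|\dot u|$ terms'' you cite (those are absorbed into $\int\sigma|\nabla u|^3$). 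For $A_2$ the boundary term involves $G_t$, which cannot be estimated directly; the paper extracts a total time derivative $-\bigl(\int_{\partial\Omega}\sigma^3 G(u\cdot\nabla n\cdot u)\bigr)_t$ and then converts the remaining boundary integrals with $G(u\cdot\nabla u)\cdot\nabla n\cdot u$ back to volume integrals via the $u=u^\bot\times n$ trick and the divergence theorem. The resulting volume terms $\|\nabla u\|_{L^2}^2\|\nabla G\|_{L^3}^2$, $\|\nabla G\|_{L^2}\|\nabla u\|_{L^4}^2\|\nabla u\|_{L^2}$, etc., together with the analogous $\|u|\nabla\curl u|\|_{L^2}^2$ term in $J_2$, are the true source of the contributions $A_1^2$, $A_1^{5/3}E_0^{1/3}$, $A_1^2E_0^{1/2}$---not the ``convective commutators $\int\rho u\cdot\nabla u\cdot\nabla\dot u$'' you name. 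Without this $G_t$-boundary mechanism your $A_2$ estimate does not close.

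A minor point: the $\sigma'$-terms in \eqref{A1-control} are absorbed solely by Lemma \ref{lem-essential-estimate} and \eqref{essential-energy}; the bound $\|\nabla u_0\|_{L^2}\le M$ is not used in this lemma and does not enter the constant $C(\tilde\rho)$.
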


\begin{proof}
   For any integer $m\geq0$, multiplying $\eqref{Large-CNS-eq}_2$ by $\sigma^m\dot{u}$ and integrating over $\Omega$, we obtain
  \begin{equation}\label{A-control1}
    \begin{aligned}
    \int\sigma^m\rho|\dot{u}|^2&=-\int\sigma^m\dot{u}\cdot\nabla P+(2\mu+\lambda)\int\sigma^m\nabla\div u\cdot\dot{u}-\mu\int\sigma^m\nabla\times\curl u\cdot\dot{u}\\
    &=I_1+I_2+I_3.
    \end{aligned}
  \end{equation}
  We will estimate $I_1,I_2$ and $I_3$. First, a direct calculation yields that
  \begin{equation}\label{I_1-estimate}
    \begin{aligned}
    I_1&=-\int\sigma^m\dot{u}\cdot\nabla P
    =\int\sigma^mP\div u_t-\int\sigma^mu\cdot\nabla u\cdot\nabla P\\
    &=\left(\int\sigma^mP\div u\right)_t-m\sigma^{m-1}\sigma'\int P\div u+\int\sigma^mP\nabla u:\nabla u^T\\
    &\quad+(\gamma-1)\int\sigma^mP(\div u)^2-\int_{\pa\Omega}\sigma^mPu\cdot\nabla u\cdot n,
    \end{aligned}
  \end{equation}
  where we have used the equation \eqref{Pressure-eq}
  \begin{equation*}
    P_t+\div(Pu)+(\gamma-1)P\div u=0.
  \end{equation*}

  Similarly, we estimate $I_2$ as
  \begin{equation}\label{I_2-estimate}
    \begin{aligned}
    &\quad I_2=(2\mu+\lambda)\int\sigma^m\nabla\div u\cdot\dot{u}\\
    &=(2\mu+\lambda)\int_{\pa\Omega}\sigma^m\div u\dot{u}\cdot n-(2\mu+\lambda)\int\sigma^m\div u\div\dot{u}\\
    &=(2\mu+\lambda)\int_{\pa\Omega}\sigma^m\div uu\cdot\nabla u\cdot n-\frac{2\mu+\lambda}{2}\left(\int\sigma^m|\div u|^2\right)_t\\
    &\quad-(2\mu+\lambda)\int\sigma^m\div u\div(u\cdot\nabla u)+\frac{1}{2}m(\mu+\lambda)\sigma^{m-1}\sigma'\int|\div u|^2\\
    &=(2\mu+\lambda)\int_{\pa\Omega}\sigma^m\div uu\cdot\nabla u\cdot n-\frac{2\mu+\lambda}{2}\left(\int\sigma^m|\div u|^2\right)_t
    +\frac{2\mu+\lambda}{2}\int\sigma^m(\div u)^3\\
    &\quad-(2\mu+\lambda)\int\sigma^m\div u\nabla u:\nabla u^T+\frac{1}{2}m(2\mu+\lambda)\sigma^{m-1}\sigma'\int|\div u|^2.
    \end{aligned}
  \end{equation}
  Combining the boundary terms in \eqref{I_1-estimate} and \eqref{I_2-estimate}, we have for $t\in[\sigma(T),T]$,
  \begin{equation}\label{boundary-term-A1-1-Large}
    \begin{aligned}
    &\quad\int_{\pa\Omega}\sigma^m[(2\mu+\lambda)\div u-P]u\cdot\nabla u\cdot n\\
    &=\int_{\pa\Omega}\sigma^mGu\cdot\nabla u\cdot n=-\int_{\pa\Omega}\sigma^mGu\cdot\nabla n\cdot u\\
    &\leq C\sigma^m\norm{G}_{L^4(\pa\Omega)}\norm{u}_{L^4(\pa\Omega)}\leq C\sigma^m\norm{\nabla G}_{L^2}\norm{\nabla u}_{L^{\frac{8}{3}}}^2\\
    &\leq C\sigma^m(\norm{\rho\dot{u}}_{L^2}+\norm{A}_{W^{1,6}}\norm{\nabla u}_{L^2})\norm{\nabla u}_{L^2}^{\frac{1}{2}}\norm{\nabla u}_{L^3}^{\frac{3}{2}}\\
    &\leq C\sigma^m(\norm{\rho\dot{u}}_{L^2}^2\norm{\nabla u}_{L^2}+\norm{A}_{W^{1,6}}^2\norm{\nabla u}_{L^2}^3+\norm{\nabla u}_{L^3}^3),
    \end{aligned}
  \end{equation}
  and for $t\in[0,\sigma(T)]$,
  \begin{equation}\label{boundary-term-A1-1-small}
    \begin{aligned}
    &\quad\int_{\pa\Omega}\sigma^m[(2\mu+\lambda)\div u-P]u\cdot\nabla u\cdot n\\
    &\leq C\sigma^m\norm{G}_{L^4(\pa\Omega)}\norm{u}_{L^4(\pa\Omega)}\leq C\sigma^m\norm{\nabla G}_{L^2}\norm{\nabla u}_{L^2}^2\\
    &\leq C\sigma^m(\norm{\rho\dot{u}}_{L^2}+\norm{A}_{W^{1,6}}\norm{\nabla u}_{L^2})\norm{\nabla u}_{L^2}^2\\
    &\leq C\sigma^m(\norm{\rho\dot{u}}_{L^2}\norm{\nabla u}_{L^2}^2+\norm{A}_{W^{1,6}}\norm{\nabla u}_{L^2}^3),
    \end{aligned}
  \end{equation}
  where we have used \eqref{transition-lem-boundary}, \eqref{transition-lem-boundary1} and \eqref{curl-G-modified-estimate1}.

  Finally, using \eqref{Navier-slip-condition} and making a similar computation on $I_3$, we have
  \begin{equation}\label{I_3-estimate}
    \begin{aligned}
    I_3&=-\mu\int\sigma^m\nabla\times\curl u\cdot\dot{u}
    =-\mu\int\sigma^m\curl u\cdot\curl\dot{u}-\mu\int_{\pa\Omega}\sigma^mn\times\curl u\dot{u}\\
    &=-\frac{\mu}{2}\left(\int\sigma^m|\curl u|^2+\int_{\pa\Omega}\sigma^mAu\cdot u\right)_t+\frac{\mu}{2}m\sigma^{m-1}\sigma'\left(\int|\curl u|^2+\int_{\pa\Omega}Au\cdot u\right)\\
    &\quad-\mu\int\sigma^m\curl u\cdot\curl(u\cdot\nabla u)-\mu\int_{\pa\Omega}Au\cdot(u\cdot\nabla u)\\
    &=-\frac{\mu}{2}\left(\int\sigma^m|\curl u|^2+\int_{\pa\Omega}\sigma^mAu\cdot u\right)_t+\frac{\mu}{2}m\sigma^{m-1}\sigma'\left(\int|\curl u|^2+\int_{\pa\Omega}Au\cdot u\right)\\
    &\quad+\frac{\mu}{2}\int\sigma^m|\curl u|^2\div u-\mu\int\sigma^m\curl u\cdot(\nabla u_i\times\nabla_i u)-\mu\int_{\pa\Omega}((Au)^{\bot}\times n)\cdot(u\cdot\nabla u)\\
    &=-\frac{\mu}{2}\left(\int\sigma^m|\curl u|^2+\int_{\pa\Omega}\sigma^mAu\cdot u\right)_t+\frac{\mu}{2}m\sigma^{m-1}\sigma'\left(\int|\curl u|^2+\int_{\pa\Omega}Au\cdot u\right)\\
    &\quad+\frac{\mu}{2}\int\sigma^m|\curl u|^2\div u-\mu\int\sigma^m\curl u\cdot(\nabla u_i\times\nabla_i u)-\mu\int_{\pa\Omega}((u\cdot\nabla u)\times (Au)^{\bot})\cdot n\\
    &\leq-\frac{\mu}{2}\left(\int\sigma^m|\curl u|^2+\int_{\pa\Omega}\sigma^mAu\cdot u\right)_t+\frac{\mu}{2}m\sigma^{m-1}\sigma'\left(\int|\curl u|^2+\int_{\pa\Omega}Au\cdot u\right)\\
    &\quad +C\sigma^m\norm{\nabla u}_{L^3}^3-\mu\int\div((u\cdot\nabla u)\times (Au)^{\bot})\\
    &\leq-\frac{\mu}{2}\left(\int\sigma^m|\curl u|^2+\int_{\pa\Omega}\sigma^mAu\cdot u\right)_t+Cm\sigma^{m-1}\sigma'\norm{\nabla u}_{L^2}^2+C\sigma^m\norm{\nabla u}_{L^3}^3\\
    &\quad+C\sigma^m(\norm{A}_{W^{1,6}}\norm{\rho\dot{u}}_{L^2}\norm{\nabla u}_{L^2}^2+\norm{A}_{W^{1,6}}^{\frac{3}{2}}\norm{\nabla u}_{L^2}^3),
    \end{aligned}
  \end{equation}
  where we have used the simple fact that by the support of $A$, \eqref{curl-G-modified-estimate1}, Gagliardo-Nirenberg inequality \eqref{GN-inequality1} and \eqref{transition-lem-Hodge1},
  \begin{equation*}
    \begin{aligned}
    &\quad\int\div((u\cdot\nabla u)\times (Au)^{\bot})\\
    &=\int Au^{\bot}\cdot(\nabla\times(u\cdot\nabla u))-\int(u\cdot\nabla u)\cdot(\nabla\times (Au)^{\bot})\\
    &=\int (Au)^{\bot}\cdot(u\cdot\nabla\curl u+\nabla u_i\times\nabla_i u)-\int(u\cdot\nabla u)\cdot(\nabla\times (Au)^{\bot})\\
    &\leq C(\norm{\nabla\curl u}_{L^2}\norm{u}_{L^6}+\norm{\nabla u}_{L^3}^2)\norm{(Au)^{\bot}}_{L^3}+C\norm{u}_{L^6}\norm{\nabla u}_{L^3}\norm{\nabla\times (Au)^{\bot}}_{L^2}\\
    &\leq C\norm{A}_{W^{1,6}}(\norm{\nabla\curl u}_{L^2}\norm{\nabla u}_{L^2}^2+\norm{\nabla u}_{L^3}^2\norm{\nabla u}_{L^2}+\norm{\nabla u}_{L^2}^2\norm{\nabla u}_{L^3})\\
    &\leq C(\norm{A}_{W^{1,6}}\norm{\rho\dot{u}}_{L^2}\norm{\nabla u}_{L^2}^2+\norm{A}_{W^{1,6}}^{\frac{3}{2}}\norm{\nabla u}_{L^2}^3+\norm{\nabla u}_{L^3}^3),
    \end{aligned}
  \end{equation*}
  provided $\norm{A}_{W^{1,6}}\leq 1$.

  It follows from \eqref{A-control1}-\eqref{I_3-estimate} that for $t\in[\sigma(T),T]$,
  \begin{equation}\label{A-control2}
    \begin{aligned}
    &\quad\left(\int\sigma^m(\mu|\curl u|^2+(2\mu+\lambda)|\div u|^2-2P\div u)+\mu\int_{\pa\Omega}\sigma^mAu\cdot u\right)_t+\int\sigma^m\rho|\dot{u}|^2\\
    &\leq Cm\sigma^{m-1}\sigma'(\int P|\nabla u|+\norm{\nabla u}_{L^2}^2)+C\sigma^m(\int P|\nabla u|^2+\norm{\nabla u}_{L^3}^3)\\
    &\quad+C\sigma^m(\norm{\rho\dot{u}}_{L^2}^2\norm{\nabla u}_{L^2}+\norm{A}_{W^{1,6}}\norm{\rho\dot{u}}_{L^2}\norm{\nabla u}_{L^2}^2+\norm{A}_{W^{1,6}}^{\frac{3}{2}}\norm{\nabla u}_{L^2}^3),
    \end{aligned}
  \end{equation}
  and for $t\in[0,\sigma(T)]$,
  \begin{equation}\label{A-control2-small}
    \begin{aligned}
    &\quad\left(\int\sigma^m(\mu|\curl u|^2+(2\mu+\lambda)|\div u|^2-2P\div u)+\mu\int_{\pa\Omega}\sigma^mAu\cdot u\right)_t+\int\sigma^m\rho|\dot{u}|^2\\
    &\leq Cm\sigma^{m-1}\sigma'(\int P|\nabla u|+\norm{\nabla u}_{L^2}^2)+C\sigma^m(\int P|\nabla u|^2+\norm{\nabla u}_{L^3}^3)\\
    &\quad+C\sigma^m(\norm{\rho\dot{u}}_{L^2}\norm{\nabla u}_{L^2}^2+\norm{A}_{W^{1,6}}\norm{\nabla u}_{L^2}^3).
    \end{aligned}
  \end{equation}
  Then integrating \eqref{A-control2}-\eqref{A-control2-small} over $[0,T]$, and using \eqref{essential-energy}, \eqref{a-priori-assumption}, \eqref{Hodge-decomposition3-exterior1} and \eqref{essential-estimate}, we have that for any integer $m\geq1$,
  \begin{equation}\label{A-control3}
    \begin{aligned}
    &\quad\sup_{t\in[0,T]}\sigma^m\norm{\nabla u}_{L^2}^2+\int_0^T\int\sigma^m\rho|\dot{u}|^2\\
    &\leq C(\tilde{\rho})\mathcal{E}_0+Cm\int_0^{\sigma(T)}\int\sigma^{m-1}(P^2+|\nabla u|^2)+C\int_0^T\int\sigma^mP|\nabla u|^2+C\int_0^T\sigma^m\norm{\nabla u}_{L^3}^3\\
    &\quad+C\int_{\sigma(T)}^T\sigma^m(\norm{\rho\dot{u}}_{L^2}^2\norm{\nabla u}_{L^2}+\norm{A}_{W^{1,6}}\norm{\rho\dot{u}}_{L^2}\norm{\nabla u}_{L^2}^2+\norm{A}_{W^{1,6}}^{\frac{3}{2}}\norm{\nabla u}_{L^2}^3)\\
    &\quad+C\int_0^{\sigma(T)}\sigma^m(\norm{\rho\dot{u}}_{L^2}\norm{\nabla u}_{L^2}^2+\norm{A}_{W^{1,6}}\norm{\nabla u}_{L^2}^3).
    \end{aligned}
  \end{equation}
  Taking $m=1$ in the above inequality yields that
   \begin{equation}\label{A-control4}
     \begin{aligned}
     &\quad A_1(T)=\sup_{t\in[0,T]}\sigma\norm{\nabla u}_{L^2}^2+\int_0^T\int\sigma\rho|\dot{u}|^2\\
     &\leq C(\tilde{\rho})\mathcal{E}_0+C\int_0^T\int\sigma(P|\nabla u|^2+|\nabla u|^3)+C(\tilde{\rho})A_1^{\frac{3}{2}}(T)+C\norm{A}_{W^{1,6}}^{\frac{3}{2}}A_1^{\frac{1}{2}}(T)E_0\\
     &\quad+C(\tilde{\rho})\norm{A}_{W^{1,6}}A_1^{\frac{1}{2}}(\sigma(T))\mathcal{E}_0+C(\tilde{\rho})\norm{A}_{W^{1,6}}(\int_{\sigma(T)}^T\norm{\sqrt{\rho}\dot{u}}_{L^2}^2)^{\frac{1}{2}}(\int_{\sigma(T)}^T\norm{\nabla u}_{L^2}^4)^{\frac{1}{2}}\\
     &\quad+C(\tilde{\rho})(\int_0^{\sigma(T)}\sigma\norm{\sqrt{\rho}\dot{u}}_{L^2}^2)^{\frac{1}{2}}(\int_0^{\sigma(T)}\sigma\norm{\nabla u}_{L^2}^4)^{\frac{1}{2}}\\
     &\leq C(\tilde{\rho})\mathcal{E}_0+C\int_0^T\int\sigma(P|\nabla u|^2+|\nabla u|^3)+C(\tilde{\rho})A_1^{\frac{3}{2}}(T)+C\norm{A}_{W^{1,6}}^{\frac{3}{2}}A_1^{\frac{1}{2}}(T)E_0\\
     &\quad+C(\tilde{\rho})\norm{A}_{W^{1,6}}A_1^{\frac{1}{2}}(\sigma(T))\mathcal{E}_0+C(\tilde{\rho})\norm{A}_{W^{1,6}}A_1(T)E_0^{\frac{1}{2}}+C(\tilde{\rho})A_1(\sigma(T))\mathcal{E}_0^{\frac{1}{2}}\\
     &\leq C(\tilde{\rho})\mathcal{E}_0+C(\tilde{\rho})A_1^{\frac{3}{2}}(T)+C\norm{A}_{W^{1,6}}^{\frac{3}{2}}A_1^{\frac{1}{2}}(T)E_0+C\int_0^T\int\sigma(P|\nabla u|^2+|\nabla u|^3),
     \end{aligned}
   \end{equation}
   where we have used \eqref{a-priori-assumption}, \eqref{essential-energy}, \eqref{essential-estimate}, \eqref{Hodge-decomposition3-exterior1}, H\"{o}lder's inequality and $\mathcal{E}_0\leq 1$, $\norm{A}_{W^{1,6}}\leq 1$. This finishes proof of \eqref{A1-control}.

  Next, we turn to prove \eqref{A2-control}. Recalling \eqref{momentum-equation} as
  \begin{equation}\label{momentum-equation2}
    \rho\dot{u}=\nabla G-\mu\nabla\times\curl u,
  \end{equation}
  then taking $\sigma^m\dot{u}_j[\pa_t+\div(u\cdot)]$ on the $j$-th component of $\eqref{momentum-equation2}$, summing over $j$, and integrating over $\Omega$ yields
  \begin{equation}\label{A-control-1}
    \begin{aligned}
    &\quad\left(\frac{1}{2}\int\sigma^m\rho|\dot{u}|^2\right)_t-\frac{1}{2}m\sigma^{m-1}\sigma'\int\rho|\dot{u}|^2\\
    &=\int\sigma^m(\dot{u}\cdot\nabla G_t+\dot{u}_j\div(u\pa_jG))+\mu\int\sigma^m(-\dot{u}\cdot\nabla\times\curl u_t-\dot{u}_j\div(u(\nabla\times\curl u)_j))\\
    &=J_1+\mu J_2.
    \end{aligned}
  \end{equation}
  For $J_1$, by virtue of \eqref{Navier-slip-condition} and \eqref{Pressure-eq}, we have
  \begin{equation}\label{J_1-estimate1}
    \begin{aligned}
    J_1&=\int\sigma^m\dot{u}\cdot\nabla G_t+\int\sigma^m\dot{u}_j\div(u\pa_jG)\\
    &=\int_{\pa\Omega}\sigma^mG_t\dot{u}\cdot n-\int\sigma^mG_t\div\dot{u}-\int\sigma^mu\cdot\nabla\dot{u}\cdot\nabla G\\
    &=\int_{\pa\Omega}\sigma^mG_t\dot{u}\cdot n-(2\mu+\lambda)\int\sigma^m|\div\dot{u}|^2+(2\mu+\lambda)\int\sigma^m\div\dot{u}\nabla u:\nabla u^T\\
    &\quad+\int\sigma^m\div\dot{u}u\cdot\nabla G-\gamma\int\sigma^mP\div u\div\dot{u}-\int\sigma^mu\cdot\nabla\dot{u}\cdot\nabla G\\
    &\leq\int_{\pa\Omega}\sigma^mG_t\dot{u}\cdot n-(2\mu+\lambda)\int\sigma^m|\div\dot{u}|^2+\delta\sigma^m\norm{\nabla\dot{u}}_{L^2}^2\\
    &\quad+C(\delta)\sigma^m(\norm{\nabla u}_{L^2}^2\norm{\nabla G}_{L^3}^2+\norm{\nabla u}_{L^4}^4+\norm{P|\nabla u|}_{L^2}^2),
    \end{aligned}
  \end{equation}
  where we have used
  \begin{equation*}
    \begin{aligned}
    G_t&=(2\mu+\lambda)\div u_t-P_t\\
    &=(2\mu+\lambda)\div\dot{u}-(2\mu+\lambda)\div(u\cdot\nabla u)+u\cdot\nabla P+\gamma P\div u\\
    &=(2\mu+\lambda)\div\dot{u}-(2\mu+\lambda)\nabla u:\nabla u^T-u\cdot\nabla G+\gamma P\div u.
    \end{aligned}
  \end{equation*}
  For the boundary term in \eqref{A-control-1}, we have
  \begin{equation}\label{boundary-term-A2-1}
    \begin{aligned}
    &\quad\int_{\pa\Omega}\sigma^mG_t\dot{u}\cdot n
    =-\int_{\pa\Omega}\sigma^mG_tu\cdot\nabla n\cdot u\\
    &=-\left(\int_{\pa\Omega}\sigma^mG(u\cdot\nabla n\cdot u)\right)_t+m\sigma^{m-1}\sigma'\int_{\pa\Omega}G(u\cdot\nabla n\cdot u)\\
    &\quad+\sigma^m\int_{\pa\Omega}G(\dot{u}\cdot\nabla n\cdot u+u\cdot\nabla n\cdot\dot{u})-\sigma^m\int_{\pa\Omega}G((u\cdot\nabla u)\cdot\nabla n\cdot u+u\cdot\nabla n\cdot(u\cdot\nabla u))\\
    &\leq-\left(\int_{\pa\Omega}\sigma^mG(u\cdot\nabla n\cdot u)\right)_t+Cm\sigma^{m-1}\sigma'\norm{\nabla u}_{L^2}^2\norm{\nabla G}_{L^2}+C\sigma^m\norm{\nabla G}_{L^2}\norm{\nabla\dot{u}}_{L^2}\norm{\nabla u}_{L^2}\\
    &\quad-\sigma^m\int_{\pa\Omega}G((u\cdot\nabla u)\cdot\nabla n\cdot u+u\cdot\nabla n\cdot(u\cdot\nabla u)),
    \end{aligned}
  \end{equation}
  where we have used H\"{o}lder's inequality and \eqref{transition-lem-boundary1}.

  For the rest boundary term in \eqref{boundary-term-A2-1}, we have from the support of $n$, H\"{o}lder's inequality, the similar argument as in \eqref{transition-lem-Hodge1} that
  \begin{equation}\label{boundary-term-A2-2}
    \begin{aligned}
    &\quad-\int_{\pa\Omega}G(u\cdot\nabla u)\cdot\nabla n\cdot u=-\int_{\pa\Omega}u^{\bot}\times n\cdot\nabla u_i\nabla_in\cdot uG\\
    &=\int_{\pa\Omega}(u^{\bot}\times\nabla u_i\nabla_in\cdot uG)\cdot n=\int\div(u^{\bot}\times\nabla u_i\nabla_in\cdot uG)\\
    &=\int(u^{\bot}\times\nabla u_i)\cdot\nabla(\nabla_in\cdot uG)+\int(\nabla\times u^{\bot})\cdot\nabla u_i\nabla_in\cdot uG\\
    &\leq C\int_{B_{2R}\cap\Omega}|\nabla G||u|^2|\nabla u|+C\int_{B_{2R}\cap\Omega}|G|(|\nabla u|^2|u|+|\nabla u||u|^2)\\
    &\leq C(\norm{\nabla G}_{L^3}\norm{\nabla u}_{L^4}\norm{\nabla u}_{L^2}^2+\norm{\nabla G}_{L^2}\norm{\nabla u}_{L^4}^2\norm{\nabla u}_{L^2}+\norm{\nabla G}_{L^2}\norm{\nabla u}_{L^4}\norm{\nabla u}_{L^2}^2),
    \end{aligned}
  \end{equation}
  where we have used the simple fact that
  \begin{equation*}
    \div(a\times b)=(\nabla\times a)\cdot b-(\nabla\times b)\cdot a.
  \end{equation*}
  The estimate above is also applicable to $-\displaystyle\int_{\pa\Omega}Gu\cdot\nabla n\cdot(u\cdot\nabla u)$.

   Now, to finish the control of $J_1$, by virtue of \eqref{curl-G-modified-estimate1}, H\"{o}lder's and Young's inequalities, we have some estimates as follows:
  \begin{equation}\label{boundary-term-A2-2-transition1}
    \begin{aligned}
    &\quad\norm{\nabla G}_{L^3}\norm{\nabla u}_{L^4}\norm{\nabla u}_{L^2}^2\\
    &\leq C(\norm{\rho\dot{u}}_{L^3}+\norm{A}_{W^{1,6}}(\norm{\nabla u}_{L^2}+\norm{\nabla u}_{L^3}))\norm{\nabla u}_{L^4}\norm{\nabla u}_{L^2}^2\\
    &\leq C(\tilde{\rho})\norm{\sqrt{\rho}\dot{u}}_{L^2}^{\frac{1}{2}}\norm{\nabla\dot{u}}_{L^2}^{\frac{1}{2}}\norm{\nabla u}_{L^4}\norm{\nabla u}_{L^2}^2+C\norm{A}_{W^{1,6}}(\norm{\nabla u}_{L^2}^3\norm{\nabla u}_{L^4}+\norm{\nabla u}_{L^2}^{\frac{7}{3}}\norm{\nabla u}_{L^4}^{\frac{5}{3}})\\
    &\leq C(\tilde{\rho})\norm{\sqrt{\rho}\dot{u}}_{L^2}^{\frac{2}{3}}\norm{\nabla\dot{u}}_{L^2}^{\frac{2}{3}}\norm{\nabla u}_{L^2}^{\frac{8}{3}}+C\norm{\nabla u}_{L^4}^4+C(\norm{A}_{W^{1,6}}^{\frac{4}{3}}+\norm{A}_{W^{1,6}}^{\frac{12}{7}})\norm{\nabla u}_{L^2}^4\\
    &\leq\delta\norm{\nabla\dot{u}}_{L^2}^2+C(\delta,\tilde{\rho})\norm{\sqrt{\rho}\dot{u}}_{L^2}\norm{\nabla u}_{L^2}^4+C\norm{\nabla u}_{L^4}^4+C\norm{A}_{W^{1,6}}^{\frac{4}{3}}\norm{\nabla u}_{L^2}^4,
    \end{aligned}
  \end{equation}
  and
  \begin{equation}\label{boundary-term-A2-2-transition2}
    \begin{aligned}
    &\quad\norm{\nabla G}_{L^2}\norm{\nabla u}_{L^4}^2\norm{\nabla u}_{L^2}\\
    &\leq C(\norm{\rho\dot{u}}_{L^2}+\norm{A}_{W^{1,6}}\norm{\nabla u}_{L^2})\norm{\nabla u}_{L^4}^2\norm{\nabla u}_{L^2}\\
    &\leq C\norm{\nabla u}_{L^4}^4+C\norm{\rho\dot{u}}_{L^2}^2\norm{\nabla u}_{L^2}^2+C\norm{A}_{W^{1,6}}^2\norm{\nabla u}_{L^2}^4,
    \end{aligned}
  \end{equation}
  and
  \begin{equation}\label{boundary-term-A2-2-transition3}
    \begin{aligned}
    &\quad\norm{\nabla G}_{L^2}\norm{\nabla u}_{L^4}\norm{\nabla u}_{L^2}^2\\
    &\leq C(\norm{\rho\dot{u}}_{L^2}+\norm{A}_{W^{1,6}}\norm{\nabla u}_{L^2})\norm{\nabla u}_{L^4}\norm{\nabla u}_{L^2}^2\\
    &\leq C\norm{\rho\dot{u}}_{L^2}^{\frac{4}{3}}\norm{\nabla u}_{L^2}^{\frac{8}{3}}+C\norm{A}_{W^{1,6}}^{\frac{4}{3}}\norm{\nabla u}_{L^2}^4+C\norm{\nabla u}_{L^4}^4,
    \end{aligned}
  \end{equation}
  and
  \begin{equation}\label{boundary-term-A2-2-transition4}
    \begin{aligned}
    &\quad\norm{\nabla G}_{L^2}\norm{\nabla\dot{u}}_{L^2}\norm{\nabla u}_{L^2}\\
    &\leq\delta\norm{\nabla\dot{u}}_{L^2}^2+C(\delta)\norm{\nabla G}_{L^2}^2\norm{\nabla u}_{L^2}^2\\
    &\leq\delta\norm{\nabla\dot{u}}_{L^2}^2+C(\delta)(\norm{\rho\dot{u}}_{L^2}^2+\norm{A}_{W^{1,6}}^2\norm{\nabla u}_{L^2}^2)\norm{\nabla u}_{L^2}^2,
    \end{aligned}
  \end{equation}
  and in the last,
  \begin{equation}\label{A2-estimate-transition1}
    \begin{aligned}
    &\quad\norm{\nabla u}_{L^2}^2\norm{\nabla G}_{L^3}^2\\
    &\leq C\norm{\nabla u}_{L^2}^2(\norm{\rho\dot{u}}_{L^3}^2+\norm{A}_{W^{1,6}}^2(\norm{\nabla u}_{L^2}^2+\norm{\nabla u}_{L^3}^2))\\
    &\leq C(\tilde{\rho})\norm{\nabla u}_{L^2}^2\norm{\sqrt{\rho}\dot{u}}_{L^2}\norm{\nabla\dot{u}}_{L^2}+C\norm{A}_{W^{1,6}}^2\norm{\nabla u}_{L^2}^2(\norm{\nabla u}_{L^2}^2+\norm{\nabla u}_{L^2}^{\frac{2}{3}}\norm{\nabla u}_{L^4}^{\frac{4}{3}})\\
    &\leq\delta\norm{\nabla\dot{u}}_{L^2}^2+C(\delta,\tilde{\rho})\norm{\sqrt{\rho}\dot{u}}_{L^2}^2\norm{\nabla u}_{L^2}^4+C\norm{\nabla u}_{L^4}^4+C(\norm{A}_{W^{1,6}}^2+\norm{A}_{W^{1,6}}^3)\norm{\nabla u}_{L^2}^4\\
    &\leq\delta\norm{\nabla\dot{u}}_{L^2}^2+C(\delta,\tilde{\rho})\norm{\sqrt{\rho}\dot{u}}_{L^2}^2\norm{\nabla u}_{L^2}^4+C\norm{\nabla u}_{L^4}^4+C\norm{A}_{W^{1,6}}^2\norm{\nabla u}_{L^2}^4,
    \end{aligned}
  \end{equation}
  provided $\norm{A}_{W^{1,6}}\leq 1$.

  Then, combining \eqref{J_1-estimate1}-\eqref{A2-estimate-transition1}, \eqref{curl-G-modified-estimate1} and \eqref{a-priori-assumption}, we obtain
  \begin{equation}\label{J_1-estimate2}
    \begin{aligned}
    J_1&\leq-\left(\int_{\pa\Omega}\sigma^mG(u\cdot\nabla n\cdot u)\right)_t+Cm\sigma^{m-1}\sigma'\norm{\nabla u}_{L^2}^2(\norm{\rho\dot{u}}_{L^2}+\norm{A}_{W^{1,6}}\norm{\nabla u}_{L^2})\\
    &\quad-(2\mu+\lambda)\int\sigma^m|\div\dot{u}|^2+C\delta\sigma^m\norm{\nabla\dot{u}}_{L^2}^2
    +C(\delta)\sigma^m(\norm{\nabla u}_{L^4}^4+\norm{P|\nabla u|}_{L^2}^2)\\
    &\quad+C(\delta,\tilde{\rho})\sigma^m(\norm{\sqrt{\rho}\dot{u}}_{L^2}^2\norm{\nabla u}_{L^2}^4+\norm{\sqrt{\rho}\dot{u}}_{L^2}^2\norm{\nabla u}_{L^2}^2+\norm{\sqrt{\rho}\dot{u}}_{L^2}^{\frac{4}{3}}\norm{\nabla u}_{L^2}^{\frac{8}{3}}+\norm{\sqrt{\rho}\dot{u}}_{L^2}\norm{\nabla u}_{L^2}^4)\\
    &\quad+C(\delta)\sigma^m\norm{A}_{W^{1,6}}^{\frac{4}{3}}\norm{\nabla u}_{L^2}^4.
    \end{aligned}
  \end{equation}
  provided $\norm{A}_{W^{1,6}}\leq 1$.

  Similarly, for $J_2$, we have
  \begin{equation}\label{J_2-estimate1}
    \begin{aligned}
    J_2&=\int\sigma^m(-\dot{u}\cdot\nabla\times\curl u_t-\dot{u}_j\div(u(\nabla\times\curl u)_j))\\
    &=-\int\sigma^m|\curl\dot{u}|^2+\int\sigma^m\curl\dot{u}\cdot(\nabla u_i\times\nabla_iu)+\int\sigma^mu\cdot\nabla\curl u\cdot\curl\dot{u}\\
    &\quad+\int_{\pa\Omega}\sigma^m\curl u_t\times n\cdot\dot{u}+\int\sigma^mu\cdot\nabla\dot{u}\cdot(\nabla\times\curl u)\\
    &\leq-\int\sigma^m|\curl\dot{u}|^2-\int_{\pa\Omega}\sigma^mA\dot{u}\cdot\dot{u}
    +\delta\sigma^m\norm{\nabla\dot{u}}_{L^2}^2+C(\delta)\sigma^m(\norm{\nabla u}_{L^4}^4+\norm{u|\nabla\curl u|}_{L^2}^2)\\
    &\quad+\sigma^m(\int\nabla(A\dot{u})_i\times\nabla u_i\cdot u^{\bot}-\int\nabla\times u^{\bot}\cdot\nabla u_i(A\dot{u})_i)\\
    &\leq-\int\sigma^m|\curl\dot{u}|^2-\int_{\pa\Omega}\sigma^mA\dot{u}\cdot\dot{u}
    +3\delta\sigma^m\norm{\nabla\dot{u}}_{L^2}^2+C(\delta)\sigma^m\norm{\nabla u}_{L^4}^4\\
    &\quad+C(\delta,\tilde{\rho})\sigma^m\norm{\sqrt{\rho}\dot{u}}_{L^2}^2\norm{\nabla u}_{L^2}^4+C(\delta)\norm{A}_{W^{1,6}}^2\norm{\nabla u}_{L^2}^4,
    \end{aligned}
  \end{equation}
  where we have applied $\norm{A}_{W^{1,6}}\leq 1$, the support of $A$, the similar argument as in \eqref{transition-lem-Hodge1}, \eqref{curl-G-modified-estimate1} and the facts that
  \begin{equation*}
    \curl u_t=\curl\dot{u}-u\cdot\nabla\curl u-\nabla u_i\times\nabla_iu,
  \end{equation*}
  and
  \begin{equation}\label{boundary-term-A2-4}
    \begin{aligned}
    &\quad\int_{\pa\Omega}\curl u_t\times n\cdot\dot{u}=-\int_{\pa\Omega}Au_t\cdot\dot{u}
    =-\int_{\pa\Omega}A\dot{u}\cdot\dot{u}+\int_{\pa\Omega}(u\cdot\nabla u)\cdot A\cdot\dot{u}\\
    &=-\int_{\pa\Omega}A\dot{u}\cdot\dot{u}+\int_{\pa\Omega}(u^{\bot}\times n\cdot\nabla u)\cdot A\dot{u}\\
    &=-\int_{\pa\Omega}A\dot{u}\cdot\dot{u}+\int_{\pa\Omega}(\nabla u_i(A\dot{u})_i\times u^{\bot})\cdot n\\
    &=-\int_{\pa\Omega}A\dot{u}\cdot\dot{u}+\int\div(\nabla u_i(A\dot{u})_i\times u^{\bot})\\
    &=-\int_{\pa\Omega}A\dot{u}\cdot\dot{u}+\int\nabla\times(\nabla u_i(A\dot{u})_i)\cdot u^{\bot}-\int\nabla\times u^{\bot}\cdot\nabla u_i(A\dot{u})_i\\
    &=-\int_{\pa\Omega}A\dot{u}\cdot\dot{u}+\int\nabla(A\dot{u})_i\times\nabla u_i\cdot u^{\bot}-\int\nabla\times u^{\bot}\cdot\nabla u_i(A\dot{u})_i\\
    &\leq-\int_{\pa\Omega}A\dot{u}\cdot\dot{u}+C\norm{A}_{W^{1,6}}(\norm{\nabla\dot{u}}_{L^2}\norm{\nabla u}_{L^2}^2+\norm{\nabla\dot{u}}_{L^2}\norm{\nabla u}_{L^4}\norm{\nabla u}_{L^2}+\norm{\nabla\dot{u}}_{L^2}\norm{\nabla u}_{L^4}^2)\\
    &\leq-\int_{\pa\Omega}A\dot{u}\cdot\dot{u}+\delta\norm{\nabla\dot{u}}_{L^2}^2+C(\delta)(\norm{A}_{W^{1,6}}^2+\norm{A}_{W^{1,6}}^4)\norm{\nabla u}_{L^2}^4+C(\delta)\norm{A}_{W^{1,6}}^2\norm{\nabla u}_{L^4}^4,
    \end{aligned}
  \end{equation}
  and the simple estimate from \eqref{curl-G-modified-estimate1} as
  \begin{equation*}
    \begin{aligned}
    \norm{u|\nabla\curl u|}_{L^2}^2&\leq\norm{u}_{L^6}^2\norm{\nabla\curl u}_{L^3}^2\leq C\norm{\nabla u}_{L^2}^2(\norm{\rho\dot{u}}_{L^3}^2+\norm{A}_{W^{1,6}}^2(\norm{\nabla u}_{L^2}^2+\norm{\nabla u}_{L^3}^2))\\
    &\leq C\norm{\nabla u}_{L^2}^2(\norm{\rho\dot{u}}_{L^2}\norm{\rho\dot{u}}_{L^6}+\norm{A}_{W^{1,6}}^2(\norm{\nabla u}_{L^2}^2+\norm{\nabla u}_{L^2}^{\frac{2}{3}}\norm{\nabla u}_{L^4}^{\frac{4}{3}}))\\
    &\leq\delta\norm{\nabla\dot{u}}_{L^2}^2+C(\delta,\tilde{\rho})\norm{\sqrt{\rho}\dot{u}}_{L^2}^2\norm{\nabla u}_{L^2}^4+C\norm{\nabla u}_{L^4}^4+C(\norm{A}_{W^{1,6}}^2+\norm{A}_{W^{1,6}}^3)\norm{\nabla u}_{L^2}^4.
    \end{aligned}
  \end{equation*}
  Therefore, combining \eqref{A-control-1}, \eqref{J_1-estimate2} and \eqref{J_2-estimate1} gives that
  \begin{equation}\label{A-control-2}
    \begin{aligned}
    &\quad\left(\int\sigma^m\rho|\dot{u}|^2+2\int_{\pa\Omega}\sigma^mu\cdot\nabla n\cdot uG\right)_t+2\sigma^m\int(\mu|\curl\dot{u}|^2+(2\mu+\lambda)|\div\dot{u}|^2)\\
    &\leq Cm\sigma^{m-1}\sigma'(\norm{\nabla u}_{L^2}^2(\norm{\sqrt{\rho}\dot{u}}_{L^2}^2+C(\tilde{\rho}))+\norm{\sqrt{\rho}\dot{u}}_{L^2}^2+\norm{\nabla u}_{L^2}^4)+C\delta\sigma^m\norm{\nabla\dot{u}}_{L^2}^2
    \\
    &\quad+C(\delta,\tilde{\rho})\sigma^m(\norm{\sqrt{\rho}\dot{u}}_{L^2}^2\norm{\nabla u}_{L^2}^4+\norm{\sqrt{\rho}\dot{u}}_{L^2}^2\norm{\nabla u}_{L^2}^2+\norm{\sqrt{\rho}\dot{u}}_{L^2}^{\frac{4}{3}}\norm{\nabla u}_{L^2}^{\frac{8}{3}}+\norm{\sqrt{\rho}\dot{u}}_{L^2}\norm{\nabla u}_{L^2}^4)\\
    &\quad+C(\delta)\sigma^m\norm{A}_{W^{1,6}}^{\frac{4}{3}}\norm{\nabla u}_{L^2}^4+C(\delta)\sigma^m(\norm{\nabla u}_{L^4}^4+\norm{P|\nabla u|}_{L^2}^2).
    \end{aligned}
  \end{equation}
  Thus, using Lemma \ref{lem-dot-u} and choosing $\delta>0$ sufficiently small yields that
  \begin{equation}\label{A-control-3}
    \begin{aligned}
    &\quad\left(\int\sigma^m\rho|\dot{u}|^2+2\int_{\pa\Omega}\sigma^mu\cdot\nabla n\cdot uG\right)_t+\sigma^m\int(\mu|\curl\dot{u}|^2+(2\mu+\lambda)|\div\dot{u}|^2)\\
    &\leq Cm\sigma^{m-1}\sigma'(\norm{\nabla u}_{L^2}^2(\norm{\sqrt{\rho}\dot{u}}_{L^2}^2+C(\tilde{\rho}))+\norm{\sqrt{\rho}\dot{u}}_{L^2}^2+\norm{\nabla u}_{L^2}^4)
    \\
    &\quad+C(\tilde{\rho})\sigma^m(\norm{\sqrt{\rho}\dot{u}}_{L^2}^2\norm{\nabla u}_{L^2}^4+\norm{\sqrt{\rho}\dot{u}}_{L^2}^2\norm{\nabla u}_{L^2}^2+\norm{\sqrt{\rho}\dot{u}}_{L^2}^{\frac{4}{3}}\norm{\nabla u}_{L^2}^{\frac{8}{3}}+\norm{\sqrt{\rho}\dot{u}}_{L^2}\norm{\nabla u}_{L^2}^4)\\
    &\quad+C\sigma^m\norm{A}_{W^{1,6}}^{\frac{4}{3}}\norm{\nabla u}_{L^2}^4+C\sigma^m(\norm{\nabla u}_{L^4}^4+\norm{P|\nabla u|}_{L^2}^2+\norm{\nabla u}_{L^{\frac{8}{3}}}^4).
    \end{aligned}
  \end{equation}
  Integrating the above inequality over $[0,T]$, taking $m=3$ and using \eqref{a-priori-assumption}, \eqref{essential-energy}, \eqref{essential-estimate}, \eqref{boundary-term-A1-1-small}, \eqref{curl-G-modified-estimate1} and Lemma \ref{lem-dot-u}, we have
  \begin{equation}\label{A-control-4}
    \begin{aligned}
    &\quad\sup_{t\in[0,T]}\sigma^3\int\rho|\dot{u}|^2+\int_0^T\int\sigma^3|\nabla\dot{u}|^2\\
    &\leq C\sup_{t\in[0,T]}\sigma^3\norm{\nabla G}_{L^2}\norm{\nabla u}_{L^2}^2+C\int_0^{\sigma(T)}\sigma^2(\norm{\nabla u}_{L^2}^2(\norm{\sqrt{\rho}\dot{u}}_{L^2}^2+C(\tilde{\rho}))+\norm{\sqrt{\rho}\dot{u}}_{L^2}^2+\norm{\nabla u}_{L^2}^4)\\
    &\quad+C(\tilde{\rho})\int_0^T\sigma^3(\norm{\sqrt{\rho}\dot{u}}_{L^2}^2\norm{\nabla u}_{L^2}^4+\norm{\sqrt{\rho}\dot{u}}_{L^2}^2\norm{\nabla u}_{L^2}^2+\norm{\sqrt{\rho}\dot{u}}_{L^2}^{\frac{4}{3}}\norm{\nabla u}_{L^2}^{\frac{8}{3}}+\norm{\sqrt{\rho}\dot{u}}_{L^2}\norm{\nabla u}_{L^2}^4)\\
    &\quad+C\int_0^T\sigma^3(\norm{\nabla u}_{L^4}^4+\norm{P|\nabla u|}_{L^2}^2+\norm{\nabla u}_{L^{\frac{8}{3}}}^4+\norm{A}_{W^{1,6}}^{\frac{4}{3}}\norm{\nabla u}_{L^2}^4)\\
    &\leq \sup_{t\in[0,T]}\sigma^3(\frac{1}{4}\norm{\sqrt{\rho}\dot{u}}_{L^2}^2+C\norm{\nabla u}_{L^2}^3+C(\tilde{\rho})\norm{\nabla u}_{L^2}^4)+C(\tilde{\rho})\mathcal{E}_0+CA_1^2(\sigma(T))+CA_1(\sigma(T))\\
    &\quad+C(\tilde{\rho})A_1(\sigma(T))\mathcal{E}_0+C(\tilde{\rho})(A_1^3(T)+A_1^2(T)+A_1^{\frac{5}{3}}(T)E_0^{\frac{1}{3}}+A_1^2(T)E_0^{\frac{1}{2}})\\
    &\quad +C\int_0^T\sigma^3(\norm{\nabla u}_{L^4}^4+\norm{P|\nabla u|}_{L^2}^2+\norm{\nabla u}_{L^{\frac{8}{3}}}^4+\norm{A}_{W^{1,6}}^{\frac{4}{3}}\norm{\nabla u}_{L^2}^4),
    \end{aligned}
  \end{equation}
  where we have used the H\"{o}lder's inequality as
  \begin{equation*}
    \begin{aligned}
    &\quad\int_0^T\sigma(\norm{\sqrt{\rho}\dot{u}}_{L^2}^{\frac{4}{3}}\norm{\nabla u}_{L^2}^{\frac{2}{3}}+\norm{\sqrt{\rho}\dot{u}}_{L^2}\norm{\nabla u}_{L^2})\\
    &\leq(\int_0^T\sigma\norm{\sqrt{\rho}\dot{u}}_{L^2}^2)^{\frac{2}{3}}(\int_0^T\sigma\norm{\nabla u}_{L^2}^2)^{\frac{1}{3}}+(\int_0^T\sigma\norm{\sqrt{\rho}\dot{u}}_{L^2}^2)^{\frac{1}{2}}(\int_0^T\sigma\norm{\nabla u}_{L^2}^2)^{\frac{1}{2}}\\
    &\leq C(\tilde{\rho})(A_1^{\frac{2}{3}}(T)E_0^{\frac{1}{3}}+A_1^{\frac{1}{2}}(T)E_0^{\frac{1}{2}}).
    \end{aligned}
  \end{equation*}
  The inequality \eqref{A-control-4} implies
  \begin{equation}\label{A-control-5}
    \begin{aligned}
    A_2(T)&\leq CA_1^{\frac{3}{2}}(T)+C(\tilde{\rho})A_1^2(T)+C(\tilde{\rho})\mathcal{E}_0+CA_1^2(\sigma(T))+CA_1(\sigma(T))
    +C(\tilde{\rho})A_1(\sigma(T))\mathcal{E}_0\\
    &\quad+C(\tilde{\rho})(A_1^3(T)+A_1^2(T)+A_1^{\frac{5}{3}}(T)E_0^{\frac{1}{3}}+A_1^2(T)E_0^{\frac{1}{2}})\\
    &\quad+C\int_0^T\sigma^3(\norm{\nabla u}_{L^4}^4+\norm{P|\nabla u|}_{L^2}^2+\norm{\nabla u}_{L^{\frac{8}{3}}}^4+\norm{A}_{W^{1,6}}^{\frac{4}{3}}\norm{\nabla u}_{L^2}^4)\\
    &\leq C(\tilde{\rho})\mathcal{E}_0+CA_1^{\frac{3}{2}}(T)+CA_1(\sigma(T))+C(\tilde{\rho})(A_1^2(T)+A_1^{\frac{5}{3}}(T)E_0^{\frac{1}{3}}+A_1^2(T)E_0^{\frac{1}{2}})\\
    &\quad+C\int_0^T\sigma^3(\norm{\nabla u}_{L^4}^4+\norm{P|\nabla u|}_{L^2}^2+\norm{\nabla u}_{L^{\frac{8}{3}}}^4+\norm{A}_{W^{1,6}}^{\frac{4}{3}}\norm{\nabla u}_{L^2}^4),
    \end{aligned}
  \end{equation}
  provided $\mathcal{E}_0\leq1$. Thus, we complete the proof of Lemma \ref{lem-A1-A2-control}.
\end{proof}

\begin{lem}\label{lem-small-time-estimate}
  Under the conditions of Proposition \ref{prop-a-priori-estimate}, it holds that
  \begin{equation}\label{small-time-estimate1}
    \sup_{t\in[0,\sigma(T)]}\int|\nabla u|^2+\int_0^{\sigma(T)}\int\rho|\dot{u}|^2\leq C(\tilde{\rho}, M),
  \end{equation}
  \begin{equation}\label{small-time-estimate2}
    \sup_{t\in[0,\sigma(T)]}t\int\rho|\dot{u}|^2+\int_0^{\sigma(T)}\int t|\nabla\dot{u}|^2\leq C(\tilde{\rho}, M),
  \end{equation}
  provided
\begin{equation}\label{small-assumption1}
  \mathcal{E}_0\leq\epsilon_2=\min\{\epsilon_1,(4C(\tilde{\rho}))^{-12}\},\quad \norm{A}_{W^{1,6}}\leq\tilde{\epsilon}_1
\end{equation}
with $\epsilon_1$ and $\tilde{\epsilon}_1$ defined in Lemma \ref{lem-A1-A2-control}.
\end{lem}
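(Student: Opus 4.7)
My approach is to revisit the energy identities derived in the proof of Lemma \ref{lem-A1-A2-control}, but with the \emph{unweighted} choice $m=0$ and specialized to the short interval $[0,\sigma(T)]$. On this interval the weight $\sigma(t)=t$ vanishes at the initial time, so one cannot use $\sigma$-weighting to initialize; instead, the hypothesis $\norm{\nabla u_0}_{L^2}\leq M$ provides the starting value, which is precisely what produces the $C(\tilde{\rho},M)$ dependence on the right-hand side.

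For \eqref{small-time-estimate1}, I would take $m=0$ in \eqref{A-control2-small} and integrate on $[0,\sigma(T)]$, yielding
\begin{equation*}
\sup_{t\in[0,\sigma(T)]}\norm{\nabla u}_{L^2}^2+\int_0^{\sigma(T)}\int\rho|\dot{u}|^2\leq C(\tilde{\rho})M^2+C\int_0^{\sigma(T)}\left(\int P|\nabla u|^2+\norm{\nabla u}_{L^3}^3\right)+\text{cross terms}.
\end{equation*}
The pressure term is bounded by $P\leq P(\tilde{\rho})$ together with the basic estimate \eqref{essential-estimate}; the term $\norm{\nabla u}_{L^3}^3$ is treated via Gagliardo-Nirenberg \eqref{GN-inequality1}, Hodge decomposition \eqref{Hodge-decomposition-exterior-Final}, and the refined bound \eqref{curl-G-modified-estimate1}, producing a contribution of the form $\norm{\sqrt{\rho}\dot{u}}_{L^2}^{3/2}\norm{\nabla u}_{L^2}^{3/2}$ that splits by Young's inequality into a piece absorbed into the left-hand-side dissipation and a piece controlled by $\norm{\nabla u}_{L^2}^4$. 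The $\norm{\nabla u}_{L^2}^4$ factor is handled by a bootstrap: pulling out one factor $\sup_{[0,\sigma(T)]}\norm{\nabla u}_{L^2}^2$ and using smallness of the cumulative $\int_0^{\sigma(T)}\norm{\nabla u}_{L^2}^2\leq C(\tilde{\rho})\mathcal{E}_0$ from Lemma \ref{lem-essential-estimate}, the quantity $\sup_{[0,\sigma(T)]}\norm{\nabla u}_{L^2}^2$ can be absorbed into the LHS once $\mathcal{E}_0$ is small enough, which is why the smallness threshold $(4C(\tilde{\rho}))^{-12}$ enters. The $A$-dependent cross terms are harmless thanks to $\norm{A}_{W^{1,6}}\leq\tilde{\epsilon}_1$.

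For \eqref{small-time-estimate2}, I would apply \eqref{A-control-3} with $m=1$ and integrate on $[0,\sigma(T)]$. Since $\sigma(0)=0$, the left-hand side carries no initial contribution, and one obtains
\begin{equation*}
\sup_{t\in[0,\sigma(T)]}t\int\rho|\dot{u}|^2+\int_0^{\sigma(T)}t\int|\nabla\dot{u}|^2\leq C\int_0^{\sigma(T)}\int\rho|\dot{u}|^2+C\int_0^{\sigma(T)}t\cdot(\cdots)+\text{boundary term},
\end{equation*}
where the boundary term $\int_{\pa\Omega}\sigma(u\cdot\nabla n\cdot u)G$ is bounded as in \eqref{boundary-term-A2-1}. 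The first term on the right is controlled by the estimate \eqref{small-time-estimate1} just obtained, while the time-integrated quadratic-plus-higher terms are dominated by $A_1(\sigma(T))$, by $\int_0^{\sigma(T)}\sigma\norm{\nabla u}_{L^4}^4$ and $\int_0^{\sigma(T)}\sigma\norm{\nabla u}_{L^{8/3}}^4$ estimated via \eqref{nabla-u-L^4-estimate2} and \eqref{new-gradient-u-estimate} together with \eqref{small-time-estimate1} and the a priori assumption $A_1(T)\leq 2\mathcal{E}_0^{3/8}$, and by pressure contributions that are bounded using $P\leq C(\tilde{\rho})$ and \eqref{essential-estimate}.

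The main obstacle is the nonlinear dissipation terms such as $\norm{\nabla u}_{L^3}^3$ and $\norm{\nabla u}_{L^4}^4$ on $[0,\sigma(T)]$: unlike in Lemma \ref{lem-A1-A2-control} where a $\sigma$-weight produces a small prefactor, here no such weight is available for the first estimate, so one must close the inequality by a careful Grönwall-type / bootstrap argument that trades $\sup_{[0,\sigma(T)]}\norm{\nabla u}_{L^2}^2$ against $\int_0^{\sigma(T)}\norm{\nabla u}_{L^2}^2\leq C(\tilde{\rho})\mathcal{E}_0$. The resulting compatibility condition forces the explicit smallness threshold $\mathcal{E}_0\leq(4C(\tilde{\rho}))^{-12}$ recorded in \eqref{small-assumption1}. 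Once the first estimate is closed, the second estimate follows by a more standard integration of the $t$-weighted $\dot{u}$-identity, because the troublesome unweighted terms have already been tamed.
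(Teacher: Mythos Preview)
Your plan for \eqref{small-time-estimate2} is essentially the paper's (take $m=1$ in \eqref{A-control-3}, integrate on $[0,\sigma(T)]$, and feed in \eqref{small-time-estimate1}), and it works. The gap is in your argument for \eqref{small-time-estimate1}. When you expand $\norm{\nabla u}_{L^3}^3$ via Hodge and \eqref{curl-G-modified-estimate1}, the leading contribution is $\norm{\sqrt{\rho}\dot{u}}_{L^2}^{3/2}\norm{\nabla u}_{L^2}^{3/2}$, and Young's inequality splitting off $\epsilon\norm{\sqrt{\rho}\dot{u}}_{L^2}^2$ forces the complementary exponent $6$, not $4$: you get $C(\tilde\rho)\norm{\nabla u}_{L^2}^6$, not $\norm{\nabla u}_{L^2}^4$. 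After time-integration this yields $C(\tilde\rho)\mathcal{E}_0\,(\sup\norm{\nabla u}_{L^2}^2)^2$, so the bootstrap is \emph{quadratic}, $X\le C(M)+C(\tilde\rho)\mathcal{E}_0 X^2$, and closing it requires a smallness threshold depending on $M$, not the $\tilde\rho$-only threshold $(4C(\tilde\rho))^{-12}$ stated in \eqref{small-assumption1}.

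The paper avoids this by a different multiplier choice and, crucially, by invoking the a~priori assumption on $A_3(\sigma(T))$, which your proposal does not use at all. Instead of setting $m=0$ in \eqref{A-control2-small}, the paper multiplies $\eqref{Large-CNS-eq}_2$ by $u_t$, so that the convective nonlinearity appears directly as $\int\rho\dot u\cdot(u\cdot\nabla u)$ on the right. This is then bounded by
\[
C(\tilde\rho)\,\norm{\sqrt\rho\dot u}_{L^2}\,\norm{\rho^{1/3}u}_{L^3}\,\norm{\nabla u}_{L^6}
\le C(\tilde\rho)A_3^{1/3}(\sigma(T))\,\norm{\sqrt\rho\dot u}_{L^2}^2+\text{(lower order)},
\]
using \eqref{curl-effective-estimate4} for $\norm{\nabla u}_{L^6}$. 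Since $A_3(\sigma(T))\le 2\mathcal{E}_0^{1/4}$ by \eqref{a-priori-assumption}, the dangerous term is absorbed once $C(\tilde\rho)\mathcal{E}_0^{1/12}\le\tfrac14$, which is exactly the origin of the exponent $-12$ and the $\tilde\rho$-only dependence in $\epsilon_2$. The remaining $-\int P_t\div u$ is handled by \eqref{Pressure-eq} and produces only terms controlled by \eqref{essential-estimate} and \eqref{essential-energy}. In short: the missing ingredient in your plan is the use of $A_3$; without it the $\norm{\nabla u}_{L^3}^3$ term cannot be closed under the stated smallness condition.
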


\begin{proof}
  Multiplying $\eqref{Large-CNS-eq}_2$ by $u_t$ and integrating over $\Omega$, we get
  \begin{equation}\label{small-T-estimate1}
  \begin{aligned}
    &\quad\frac{d}{dt}\left(\int(\frac{\mu}{2}|\curl u|^2+\frac{2\mu+\lambda}{2}|\div u|^2-P\div u)+\frac{\mu}{2}\int_{\pa\Omega}Au\cdot u\right)+\int\rho|\dot{u}|^2\\
    &=\int\rho\dot{u}\cdot(u\cdot\nabla u)-\int P_t\div u\\
    &=\int\rho\dot{u}\cdot(u\cdot\nabla u)-\int Pu\cdot\nabla\div u+(\gamma-1)\int P|\div u|^2\\
    &=\int\rho\dot{u}\cdot(u\cdot\nabla u)-\frac{1}{2\mu+\lambda}\int Pu\cdot\nabla G+\frac{1}{2(2\mu+\lambda)}\int\div uP^2+(\gamma-1)\int P|\div u|^2\\
    &\leq C(\tilde{\rho})\norm{\sqrt{\rho}\dot{u}}_{L^2}\norm{\rho^{\frac{1}{3}}u}_{L^3}\norm{\nabla u}_{L^6}+C\norm{P}_{L^3}\norm{u}_{L^6}\norm{\nabla G}_{L^2}+C\norm{\nabla u}_{L^2}\norm{P}_{L^4}^2\\
    &\quad+C(\tilde{\rho})(\gamma-1)\norm{\nabla u}_{L^2}^2\\
    &\leq C(\tilde{\rho})\norm{\sqrt{\rho}\dot{u}}_{L^2}\norm{\rho^{\frac{1}{3}}u}_{L^3}(\norm{\rho\dot{u}}_{L^2}+\norm{P}_{L^6}+\norm{\nabla u}_{L^2})\\
    &\quad+C\norm{P}_{L^3}\norm{\nabla u}_{L^2}(\norm{\rho\dot{u}}_{L^2}+\norm{\nabla u}_{L^2})+C(\tilde{\rho})\norm{\nabla u}_{L^2}^2+C\norm{P}_{L^4}^4\\
    &\leq (C(\tilde{\rho})\norm{\rho^{\frac{1}{3}}u}_{L^3}+\frac{1}{4})\norm{\sqrt{\rho}\dot{u}}_{L^2}^2+C(\tilde{\rho})\norm{\rho^{\frac{1}{3}}u}_{L^3}(\norm{\nabla u}_{L^2}^2+\norm{P}_{L^6}^2)\\
    &\quad+C(\tilde{\rho})((1+\norm{P}_{L^3}+\norm{P}_{L^3}^2)\norm{\nabla u}_{L^2}^2+\norm{P}_{L^1})\\
    &\leq(C(\tilde{\rho})\norm{\rho^{\frac{1}{3}}u}_{L^3}+\frac{1}{4})\norm{\sqrt{\rho}\dot{u}}_{L^2}^2+C(\tilde{\rho})\norm{\rho^{\frac{1}{3}}u}_{L^3}(\norm{\nabla u}_{L^2}^2+\norm{P}_{L^1}^{\frac{1}{3}})\\
    &\quad+C(\tilde{\rho})((1+\norm{P}_{L^1})\norm{\nabla u}_{L^2}^2+\norm{P}_{L^1}),
  \end{aligned}
  \end{equation}
  where we have used \eqref{Pressure-eq}, \eqref{a-priori-assumption}, \eqref{Hodge-decomposition-exterior-Final}, \eqref{curl-G-modified-estimate1}, and also $\norm{A}_{W^{1,6}}\leq 1$.

  Then, integrating \eqref{small-T-estimate1} on $[0,\sigma(T)]$ and using \eqref{Hodge-decomposition3-exterior1}, \eqref{essential-energy} and \eqref{essential-estimate}, we have
  \begin{equation}\label{small-T-estimate2}
    \begin{aligned}
    &\quad\sup_{t\in[0,\sigma(T)]}\norm{\nabla u}_{L^2}^2+\int_0^{\sigma(T)}\int\rho|\dot{u}|^2\\
    &\leq C(M)+C(\tilde{\rho})\mathcal{E}_0+C(\tilde{\rho})A_3^{\frac{1}{3}}(\sigma(T))(\mathcal{E}_0+\mathcal{E}_0^{\frac{1}{3}})
    +C(\tilde{\rho})((1+\mathcal{E}_0)\mathcal{E}_0+\mathcal{E}_0)\\
    &\leq C(\tilde{\rho},M),
    \end{aligned}
  \end{equation}
  provided
  \begin{equation*}
    C(\tilde{\rho})A_3^{\frac{1}{3}}(\sigma(T))\leq\frac{1}{4},\quad \textrm{i.e.}, \quad \mathcal{E}_0\leq(4C(\tilde{\rho}))^{-12}.
  \end{equation*}
  Thus, we complete the proof of \eqref{small-time-estimate1}.

  Next, we turn to prove \eqref{small-time-estimate2}. Taking $m=1$ and $T=\sigma(T)$ in \eqref{A-control-4}, we have from \eqref{small-time-estimate1} that
  \begin{equation}\label{small-T-estimate3}
    \begin{aligned}
    &\quad\sup_{t\in[0,\sigma(T)]}\sigma\int\rho|\dot{u}|^2+\int_0^{\sigma(T)}\int\sigma|\nabla\dot{u}|^2\\
    &\leq \sup_{t\in[0,\sigma(T)]}\sigma(C\norm{\nabla u}_{L^2}^3+C(\tilde{\rho})\norm{\nabla u}_{L^2}^4)+C\int_0^{\sigma(T)}(\norm{\nabla u}_{L^2}^2(\norm{\sqrt{\rho}\dot{u}}_{L^2}^2+C(\tilde{\rho}))+\norm{\sqrt{\rho}\dot{u}}_{L^2}^2+\norm{\nabla u}_{L^2}^4)\\
    &\quad+C(\tilde{\rho})\int_0^{\sigma(T)}\sigma(\norm{\sqrt{\rho}\dot{u}}_{L^2}^2\norm{\nabla u}_{L^2}^4+\norm{\sqrt{\rho}\dot{u}}_{L^2}^2\norm{\nabla u}_{L^2}^2+\norm{\sqrt{\rho}\dot{u}}_{L^2}^{\frac{4}{3}}\norm{\nabla u}_{L^2}^{\frac{8}{3}}+\norm{\sqrt{\rho}\dot{u}}_{L^2}\norm{\nabla u}_{L^2}^4)\\
    &\quad+C\int_0^{\sigma(T)}\sigma(\norm{\nabla u}_{L^4}^4+\norm{P|\nabla u|}_{L^2}^2+\norm{\nabla u}_{L^{\frac{8}{3}}}^4+\norm{A}_{W^{1,6}}^{\frac{4}{3}}\norm{\nabla u}_{L^2}^4)\\
    &\leq C(\tilde{\rho},M)A_1(\sigma(T))+C(\tilde{\rho},M)+C(\tilde{\rho},M)\mathcal{E}_0+C(\tilde{\rho},M)A_1(\sigma(T))+C(\tilde{\rho})A_1(\sigma(T))\mathcal{E}_0\\
    &\quad+\frac{1}{2}\sup_{t\in[0,\sigma(T)]}\sigma\norm{\sqrt{\rho}\dot{u}}_{L^2}^2,
    \end{aligned}
  \end{equation}
  where we have used the following estimate
  \begin{equation*}
    \begin{aligned}
    C\int_0^{\sigma(T)}\sigma\norm{\nabla u}_{L^4}^4&\leq C\int_0^{\sigma(T)}\sigma\norm{\nabla u}_{L^2}\norm{\nabla u}_{L^6}^3\\
    &\leq C\int_0^{\sigma(T)}\sigma\norm{\nabla u}_{L^2}(\norm{\rho\dot{u}}_{L^2}+\norm{\nabla u}_{L^2}+\norm{P}_{L^6})^3\\
    &\leq C(\tilde{\rho},M)\sup_{t\in[0,\sigma(T)]}\sigma\norm{\sqrt{\rho}\dot{u}}_{L^2}\int_0^{\sigma(T)}\norm{\sqrt{\rho}\dot{u}}_{L^2}^2+C(\tilde{\rho})A_1(\sigma(T))\mathcal{E}_0+C(\tilde{\rho})\mathcal{E}_0\\
    &\leq\frac{1}{2}\sup_{t\in[0,\sigma(T)]}\sigma\norm{\sqrt{\rho}\dot{u}}_{L^2}^2+C(\tilde{\rho},M)+C(\tilde{\rho})A_1(\sigma(T))\mathcal{E}_0+C(\tilde{\rho})\mathcal{E}_0
    \end{aligned}
  \end{equation*}
  due to \eqref{Hodge-decomposition-exterior-Final}, \eqref{curl-G-modified-estimate1}, \eqref{essential-energy},  \eqref{essential-estimate}, and $\norm{A}_{W^{1,6}}\leq 1$.

  Then, \eqref{small-T-estimate3} implies
  \begin{equation}\label{small-T-estimate4}
    \sup_{t\in[0,\sigma(T)]}\sigma\int\rho|\dot{u}|^2+\int_0^{\sigma(T)}\int\sigma|\nabla\dot{u}|^2\leq C(\tilde{\rho},M).
  \end{equation}
  Therefore, we complete the proof of \eqref{small-time-estimate2}
  and finish the proof of Lemma \ref{lem-small-time-estimate}.
\end{proof}

\begin{lem}\label{lem-u-L^3}
  Under the conditions of Proposition \ref{prop-a-priori-estimate}, it holds that
  \begin{equation}\label{u-L^3-goal}
    A_3(\sigma(T))\leq\mathcal{E}_0^{\frac{1}{4}},
  \end{equation}
  provided
  \begin{equation}\label{small-assumption2}
    \mathcal{E}_0\leq\epsilon_3=\min\{\epsilon_2,(C(\tilde{\rho},M))^{-2}\},\quad\norm{A}_{W^{1,6}}\leq 1=\tilde{\epsilon}_1.
  \end{equation}
\end{lem}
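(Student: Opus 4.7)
The plan is to derive a differential inequality for $\int\rho|u|^3$ via the material-derivative identity and close it using the bounds already established in Lemmas \ref{lem-essential-energy}, \ref{lem-essential-estimate} and \ref{lem-small-time-estimate}. From the transport formula and $\dot{|u|^3}=3|u|u\cdot\dot u$, followed by Cauchy--Schwarz,
\begin{equation*}
\frac{d}{dt}\int\rho|u|^3 = 3\int\rho|u|u\cdot\dot u \leq 3\|\sqrt\rho\,\dot u\|_{L^2}\|\sqrt\rho\,u^2\|_{L^2}.
\end{equation*}
The first factor is square-integrable in time by \eqref{small-time-estimate1}, so the heart of the matter is to control $\|\sqrt\rho\,u^2\|_{L^2}$. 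Writing $\rho|u|^4 = (\sqrt\rho\,u)\cdot\rho^{1/2}|u|^3$ and using $\rho\leq 2\tilde\rho$ together with H\"older,
\begin{equation*}
\|\sqrt\rho\,u^2\|_{L^2}^2 = \int\rho|u|^4 \leq (2\tilde\rho)^{1/2}\|\sqrt\rho\,u\|_{L^2}\|u\|_{L^6}^3 \leq C(\tilde\rho)\|\sqrt\rho\,u\|_{L^2}\|\nabla u\|_{L^2}^3,
\end{equation*}
where the Sobolev embedding $D^{1,2}\hookrightarrow L^6$ on the exterior domain (valid because of the far-field condition \eqref{far-field-condition}) is used at the last step.

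Substituting and integrating over $[0,\sigma(T)]$, Cauchy--Schwarz in time yields
\begin{equation*}
A_3(\sigma(T)) \leq A_3(0) + C(\tilde\rho)\Bigl(\sup_{[0,\sigma(T)]}\|\sqrt\rho\,u\|_{L^2}^2\Bigr)^{1/4}\Bigl(\int_0^{\sigma(T)}\|\sqrt\rho\,\dot u\|_{L^2}^2\Bigr)^{1/2}\Bigl(\int_0^{\sigma(T)}\|\nabla u\|_{L^2}^3\Bigr)^{1/2}.
\end{equation*}
The three time-dependent factors are bounded using, respectively, \eqref{essential-estimate} (giving $C(\tilde\rho)\mathcal{E}_0$), \eqref{small-time-estimate1} (giving $C(\tilde\rho,M)$), and the elementary estimate $\int_0^{\sigma(T)}\|\nabla u\|_{L^2}^3 \leq (\sup\|\nabla u\|_{L^2})\int_0^{\sigma(T)}\|\nabla u\|_{L^2}^2 \leq C(\tilde\rho,M)\mathcal{E}_0$ combining \eqref{small-time-estimate1} with \eqref{essential-estimate}. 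The initial term $A_3(0)$ is handled by exactly the same weighted-H\"older trick, together with $\int\rho_0|u_0|^2\leq 2\mathcal{E}_0$ (immediate from the definition \eqref{defi-modified-energy}) and $\|\nabla u_0\|_{L^2}\leq M$. Both contributions are then of order $C(\tilde\rho,M)\mathcal{E}_0^{3/4}$, so the desired bound $A_3(\sigma(T))\leq\mathcal{E}_0^{1/4}$ follows once $C(\tilde\rho,M)\mathcal{E}_0^{1/2}\leq 1$, which is exactly the smallness \eqref{small-assumption2}.

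The main (mild) obstacle is that in the exterior domain neither a uniform bound on $\|u\|_{L^2}$ nor a Poincar\'e inequality is available (cf.\ Remark \ref{remark-1}), so one cannot estimate $\int\rho|u|^4$ by simply pulling out $\|\rho\|_{L^\infty}$. The weighted splitting $\rho = \rho^{1/2}\cdot\rho^{1/2}$ is what cleanly separates the weighted momentum $\sqrt\rho\,u$, controlled by the essential-energy bound \eqref{essential-estimate}, from the $L^6$-norm of $u$, controlled via Sobolev. No pressure or boundary integrals enter this argument, so no smallness on $A$ is needed beyond the $\|A\|_{W^{1,6}}\leq 1$ already imposed at the previous step.
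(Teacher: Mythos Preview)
Your proof is correct and takes a genuinely different, more elementary route than the paper. The paper tests the momentum equation $\eqref{Large-CNS-eq}_2$ against $3|u|u$, producing the identity \eqref{u-L^3-estimate1} with dissipation, pressure, and boundary terms; it then discards the dissipation, estimates the pressure contribution, and invokes the elliptic bound $\norm{\nabla u}_{L^6}\leq C(\norm{\rho\dot u}_{L^2}+\norm{\nabla u}_{L^2}+\norm{P}_{L^6})$ from Lemma \ref{lem-curl-effective-viscous} to close. You instead exploit the transport identity $\frac{d}{dt}\int\rho|u|^3=3\int\rho|u|u\cdot\dot u$ and bound the right-hand side by Cauchy--Schwarz and the weighted splitting $\int\rho|u|^4\leq C(\tilde\rho)\norm{\sqrt\rho u}_{L^2}\norm{\nabla u}_{L^2}^3$, needing only the Sobolev embedding $\norm{u}_{L^6}\leq C\norm{\nabla u}_{L^2}$ and the already-established bounds \eqref{essential-estimate}, \eqref{small-time-estimate1}. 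The payoff is that you never touch the pressure, the boundary integral, or the elliptic estimate for $\nabla u$ in $L^6$; the paper's route, on the other hand, keeps the argument parallel to its treatment of $A_1,A_2$ and makes transparent where the dissipation sits, even though that dissipation is ultimately not used. Both arrive at $A_3(\sigma(T))\leq C(\tilde\rho,M)\mathcal{E}_0^{3/4}$ and the same smallness threshold.
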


\begin{proof}
  Multiplying $\eqref{Large-CNS-eq}_2$ by $3|u|u$ and integrating over $\Omega$ yields that
  \begin{equation}\label{u-L^3-estimate1}
    \begin{aligned}
    &\quad\frac{d}{dt}\int\rho|u|^3+3(2\mu+\lambda)\int\div u\div(u|u|)+3\mu\int\curl u\cdot\curl(u|u|)+3\mu\int_{\pa\Omega}Au\cdot u|u|\\
    &=3\int P\div(u|u|),
    \end{aligned}
  \end{equation}
  which together with \eqref{Hodge-decomposition-exterior-Final} and \eqref{curl-G-modified-estimate1} implies that
  \begin{equation}\label{u-L^3-estimate2}
    \begin{aligned}
    &\quad\frac{d}{dt}\int\rho|u|^3+3(2\mu+\lambda)\int|\div u|^2|u|+3\mu\int|\curl u|^2|u|+3\mu\int_{\pa\Omega}Au\cdot u|u|\\
    &\leq C\int|u||\nabla u|^2+C\int P|u||\nabla u|\\
    &\leq C\norm{u}_{L^6}\norm{\nabla u}_{L^2}^{\frac{3}{2}}\norm{\nabla u}_{L^6}+C\norm{u}_{L^6}\norm{\nabla u}_{L^2}\norm{P}_{L^3}\\
    &\leq C\norm{\nabla u}_{L^2}^{\frac{5}{2}}(\norm{\rho\dot{u}}_{L^2}+\norm{\nabla u}_{L^2}+\norm{P}_{L^6})^{\frac{1}{2}}+C\norm{\nabla u}_{L^2}^2\norm{P}_{L^3},
    \end{aligned}
  \end{equation}
  provided $\norm{A}_{W^{1,6}}\leq 1$.

  Then integrating over $[0,\sigma(T)]$, we have from \eqref{small-time-estimate1}, \eqref{small-time-estimate2}, \eqref{a-priori-assumption} and \eqref{essential-estimate}
  that
  \begin{equation}\label{u-L^3-estimate3}
    \begin{aligned}
    A_3(\sigma(T))&\leq C(\tilde{\rho})\sup_{t\in[0,\sigma(T)]}\norm{\nabla u}_{L^2}(\int_0^{\sigma(T)}\norm{\nabla u}_{L^2}^2)^{\frac{3}{4}}(\int_0^{\sigma(T)}\norm{\sqrt{\rho}\dot{u}}_{L^2}^2)^{\frac{1}{4}}\\
    &\quad+\int\rho_0|u_0|^3+C(\tilde{\rho},M)(\mathcal{E}_0+\mathcal{E}_0^{\frac{13}{12}})+C(\tilde{\rho})\mathcal{E}_0^{\frac{4}{3}}\\
    &\leq C(\tilde{\rho},M)\mathcal{E}_0^{\frac{3}{4}}++C(\tilde{\rho},M)\mathcal{E}_0
    +C(\tilde{\rho})\norm{\sqrt{\rho_0}u_0}_{L^2}^{\frac{3}{2}}\norm{\nabla u_0}_{L^2}^{\frac{3}{2}}\\
    &\leq C(\tilde{\rho},M)\mathcal{E}_0^{\frac{3}{4}}\leq \mathcal{E}_0^{\frac{1}{4}},
    \end{aligned}
  \end{equation}
  provided
  \begin{equation*}
    \mathcal{E}_0\leq 1,\,
    C(\tilde{\rho},M)\mathcal{E}_0^{\frac{1}{2}}\leq1,\quad\textrm{i.e.},\quad\mathcal{E}_0\leq\min\{1,(C(\tilde{\rho},M))^{-2}\}.
  \end{equation*}
  Thus, we prove \eqref{u-L^3-goal} and thus complete the proof of Lemma \ref{lem-u-L^3}.
\end{proof}

With the following lemma in hand, we can complete the proof of Proposition \ref{prop-a-priori-estimate}.
\begin{lem}\label{lem-control-A1-A2}
  Under the conditions of Proposition \ref{prop-a-priori-estimate}, there holds that
  \begin{equation}\label{control-A1-A2}
    A_1(T)\leq\mathcal{E}_0^{\frac{3}{8}},\quad A_1(T)\leq\mathcal{E}_0^{\frac{1}{2}},
  \end{equation}
  provided
  \begin{equation}\label{small-assumption3}
  \begin{aligned}
    \mathcal{E}_0\leq\epsilon_4=&\min\{\epsilon_3,(4C(\tilde{\rho}))^{-2}, (3C(\tilde{\rho}))^{-16},(3C(\tilde{\rho},M)(E_0+1))^{-2},(1+E_0)^{-\frac{16}{3}},\\
    &\qquad(4C(\tilde{\rho}))^{-\frac{128}{3}}E_0^{-\frac{56}{3}}, (4C(\tilde{\rho},M))^{-\frac{8}{5}}, (4C(\tilde{\rho})(1+E_0))^{-8}\},
  \end{aligned}
  \end{equation}
  and
  \begin{equation}\label{small-A-assumption1}
    \norm{A}_{W^{1,6}}\leq\min\left\{1,(3CE_0)^{-\frac{3}{4}}\mathcal{E}_0^{\frac{3}{32}}, E_0^{-\frac{2}{3}}\mathcal{E}_0^{\frac{7}{24}}, (4CE_0)^{-\frac{8}{9}}\mathcal{E}_0^{\frac{1}{6}}\right\}
  \end{equation}
  where $C$ depends only on $\mu,\lambda$ and $\Omega$.
\end{lem}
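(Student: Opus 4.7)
The strategy is to plug the a priori assumption \eqref{a-priori-assumption} into the two recursive estimates \eqref{A1-control}--\eqref{A2-control} and show that, up to constants depending only on $\tilde\rho,M,E_0$, each nonlinear term on the right is bounded by a small fraction of $\mathcal{E}_0^{3/8}$ (resp.\ $\mathcal{E}_0^{1/2}$), which is achieved by choosing $\mathcal{E}_0$ and $\|A\|_{W^{1,6}}$ sufficiently small. The guiding principle is the scaling $A_1(T)\sim A_2(T)^{3/4}$ flagged in the introduction: since the a priori assumption reads $A_1\leq 2\mathcal{E}_0^{3/8}$ and $A_2\leq 2\mathcal{E}_0^{1/2}$, one has $A_1^{3/2}\sim\mathcal{E}_0^{9/16}\ll\mathcal{E}_0^{3/8}$ and similarly $A_2$'s quadratic terms in $A_1$ are subcritical.

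First I would close the $A_1(T)$ bound. The terms $C(\tilde\rho)\mathcal{E}_0$, $C(\tilde\rho)A_1^{3/2}(T)$ and $C\|A\|_{W^{1,6}}^{3/2}A_1^{1/2}(T)E_0$ are all visibly of higher order than $\mathcal{E}_0^{3/8}$ under the stated smallness hypotheses. The nontrivial term is $\int_0^T\sigma(P|\nabla u|^2+|\nabla u|^3)$; I split the integral at $\sigma(T)$. On $[0,\sigma(T)]$ I use Lemma \ref{lem-essential-estimate} together with Lemma \ref{lem-small-time-estimate}, pulling out $\sup_t\|\nabla u\|_{L^2}$ and using $\int_0^{\sigma(T)}\|\nabla u\|_{L^2}^2\leq C(\tilde\rho)\mathcal{E}_0$ plus $\sup_t\|P\|_{L^\infty}\leq C(\tilde\rho)$ and $\int_0^{\sigma(T)}\|P\|_{L^2}^2\leq C(\tilde\rho)\mathcal{E}_0$. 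On $[\sigma(T),T]$ I use the boundary-adapted refinement of Remark \ref{rem-small-A}: via \eqref{curl-G-modified-estimate1} and Gagliardo--Nirenberg, $\|\nabla u\|_{L^3}^3$ is interpolated between $\|\nabla u\|_{L^2}$ (carrying the decay $\sigma^{-1/2}A_1^{1/2}$) and $\|\rho\dot u\|_{L^2}+\|A\|_{W^{1,6}}\|\nabla u\|_{L^2}+\|P\|_{L^2}+\|P\|_{L^6}$; integration in time extracts factors $A_1(T)$, $\int_{\sigma(T)}^T\|\sqrt\rho\dot u\|_{L^2}^2\leq\sigma(T)^{-1}A_1(T)$, and the pressure dissipation from Remark \ref{rem-pressure-energy}. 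All resulting products are of the form $\mathcal{E}_0^{\alpha}$ with $\alpha>3/8$ once $\|A\|_{W^{1,6}}$ satisfies \eqref{small-A-assumption1}.

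Second, and this is the main obstacle, I would close the $A_2(T)$ bound by controlling $\int_0^T\sigma^3\|\nabla u\|_{L^4}^4$. I apply \eqref{nabla-u-L^4-estimate2} to reduce this to estimating
\begin{equation*}
\int_0^T\sigma^3\!\!\left(\|\nabla u\|_{L^2}\|\rho\dot u\|_{L^2}^3+\|\rho\dot u\|_{L^2}^{3/2}\|\nabla u\|_{L^2}^{5/2}+\|A\|_{W^{1,6}}^{3/2}\|\nabla u\|_{L^2}^4+\|P\|_{L^4}^4+\|P\|_{L^2}^4\right),
\end{equation*}
plus the $P$-contributions. The first two pieces are absorbed by $A_1(T)^{1/2}A_2(T)$-type products via Cauchy--Schwarz, the third by smallness of $\|A\|_{W^{1,6}}$; the genuinely new term is $\int_0^T\sigma^3\|P-P(\rho_\infty)\|_{L^2}^4$. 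Here I would implement the Bogovskii-operator identity sketched in \eqref{intro-P-L^2-estimate}: test $\eqref{Large-CNS-eq}_2$ against $\mathcal{B}[P-P(\rho_\infty)]$ to obtain $\|P-P(\rho_\infty)\|_{L^2}^2$ as $\tfrac{d}{dt}\!\int\rho u\cdot\mathcal{B}[P-P(\rho_\infty)]$ plus terms controlled by $\|\rho\|_{L^{3/2}}\|\nabla u\|_{L^2}^2$ and $\|\nabla u\|_{L^2}^2$; crucially, this uses $\rho_\infty=0$ so that $\rho\in L^{3/2}$ is inherited from the basic energy. Time-integration with the weight $\sigma^3$ then converts $\|P\|_{L^2}^2$-type quantities into $\mathcal{E}_0$-small terms using $\int P\leq(\gamma-1)E_0\leq\mathcal{E}_0$.

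The remaining pieces in \eqref{A2-control} are routine: $\int_0^T\sigma^3\|P|\nabla u|\|_{L^2}^2$ is bounded by $C(\tilde\rho)\int_0^T\|P\|_{L^\infty}\|P\|_{L^2}\|\nabla u\|_{L^2}^2\leq C(\tilde\rho)\mathcal{E}_0 E_0$; $\int_0^T\sigma^3\|\nabla u\|_{L^{8/3}}^4$ is handled by \eqref{new-gradient-u-estimate} in the same spirit as the $L^4$-term but with tamer exponents; and the $\|A\|_{W^{1,6}}^{4/3}\|\nabla u\|_{L^2}^4$ contribution is absorbed by the $\|A\|_{W^{1,6}}$-smallness. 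Assembling, $A_2(T)\leq \tfrac12\mathcal{E}_0^{1/2}+\tfrac12\mathcal{E}_0^{1/2}=\mathcal{E}_0^{1/2}$ and similarly $A_1(T)\leq\mathcal{E}_0^{3/8}$, provided $\mathcal{E}_0$ and $\|A\|_{W^{1,6}}$ satisfy \eqref{small-assumption3} and \eqref{small-A-assumption1}; the precise thresholds are read off by matching powers term by term. The hard point throughout is that $\int_0^T\|\nabla u\|_{L^2}^2$ has no $\mathcal{E}_0$-smallness, so every bound must be arranged to extract smallness either from $\sigma$-weighted dissipation, from $\|P\|_{L^1}\leq\mathcal{E}_0$, or from $\|A\|_{W^{1,6}}$.
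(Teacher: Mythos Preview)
Your outline captures the same overall architecture as the paper (Bogovskii for the $L^2$ pressure dissipation, the refined $\|\nabla u\|_{L^4}$ decomposition of Remark~\ref{rem-small-A}, splitting at $\sigma(T)$, and the scaling $A_1\sim A_2^{3/4}$), but there is one genuine gap: you never explain how to obtain a \emph{time-uniform, $\mathcal{E}_0$-small} bound on $\int_0^T\sigma^3\|P\|_{L^4}^4$. You flag $\int_0^T\sigma^3\|P\|_{L^2}^4$ as ``the genuinely new term'' and treat the other $P$-pieces as routine, but in fact the $L^4$ integral is the harder of the two. The Bogovskii identity only yields $\int_0^T\sigma^3\|P\|_{L^2}^2\le C(1+E_0)$, so any crude bound of the form $\|P\|_{L^4}^4\le C(\tilde\rho)\|P\|_{L^2}^2$ gives $\int_0^T\sigma^3\|P\|_{L^4}^4\le C(\tilde\rho)(1+E_0)$, which is not $o(\mathcal{E}_0^{1/2})$ and the $A_2$ estimate cannot close; the pointwise alternative $\|P\|_{L^4}^4\le C(\tilde\rho)\|P\|_{L^1}\le C(\tilde\rho)\mathcal{E}_0$ gives a $T$-dependent bound after integration. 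The paper resolves this by multiplying the pressure transport equation \eqref{Pressure-eq} by $3\sigma^3P^2$ and using $(2\mu+\lambda)\div u=G+P$ to extract a genuine damping term $\int\sigma^3 P^4$; integrating then gives $\int_0^T\sigma^3\|P\|_{L^4}^4\le C(\tilde\rho)\mathcal{E}_0+C(\tilde\rho)A_1^{3/2}A_2^{1/2}+\text{(small $A$-terms)}$ (see \eqref{P-L^4-estimate1}--\eqref{P-L^4-estimate2}), and this $\mathcal{E}_0^{13/16}$-type smallness is also what makes $\int_{\sigma(T)}^T\|P\|_{L^3}^3$ controllable in the $A_1$ step via \eqref{A1-estimate-1}.

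Two smaller remarks. First, your estimate $\int_0^T\sigma^3\|P|\nabla u|\|_{L^2}^2\le C(\tilde\rho)\int_0^T\|P\|_{L^\infty}\|P\|_{L^2}\|\nabla u\|_{L^2}^2$ is not a valid pointwise inequality; the paper simply writes $\|P|\nabla u|\|_{L^2}^2\le\tfrac12\|P\|_{L^4}^4+\tfrac12\|\nabla u\|_{L^4}^4$, which pushes the work back onto the $L^4$ bounds. Second, the order you propose ($A_1$ first, then $A_2$) is formally the reverse of the paper's, but this is inessential: both closures require the same pressure-dissipation ingredients, and once $\int_0^T\sigma^3\|P\|_{L^4}^4$ and $\int_0^T\sigma^3\|P\|_{L^2}^2$ are in hand the two estimates decouple.
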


\begin{proof}
 Due to the lack of smallness of $\displaystyle\int_0^T\int|\nabla u|^2$, to control the bad term $\displaystyle\int_0^T\sigma^3\norm{\nabla u}_{L^4}^4$, we have to estimate $\displaystyle\int_0^T\sigma^3\norm{P}_{L^4}^4$ and $\displaystyle\int_0^T\sigma^3\norm{P}_{L^2}^4$, as discussed in Remark \ref{rem-small-A}. To begin with, we rewrite  \eqref{Pressure-eq} into
  \begin{equation}\label{Pressure-eq2}
    P_t+u\cdot\nabla P+\gamma\div u P=0.
  \end{equation}
  Multiplying \eqref{Pressure-eq2} by $3\sigma^3P^2$ and integrating the resultant equation over $\Omega\times[0,T]$, we get from the fact that $(2\mu+\lambda)\div u=G+P$ that
  \begin{equation}\label{P-L^4-estimate1}
    \begin{aligned}
    &\quad\frac{d}{dt}\int\sigma^3P^3+\frac{3\gamma-1}{2\mu+\lambda}\int\sigma^3P^4\\
    &=3\sigma^2\sigma'\int P^3-\frac{3\gamma-1}{2\mu+\lambda}\int\sigma^3P^3G\\
    &\leq3\sigma^2\sigma'\int P^3+\frac{3\gamma-1}{2(2\mu+\lambda)}\int\sigma^3P^4+\frac{27}{4}\frac{3\gamma-1}{2\mu+\lambda}\sigma^3\norm{G}_{L^4}^4,
    \end{aligned}
  \end{equation}
  which together with the following simple fact by \eqref{curl-G-modified-estimate1}, \eqref{a-priori-assumption} and \eqref{essential-energy}
  \begin{equation}\label{G-L^4-control1}
    \begin{aligned}
    \int_0^T\sigma^3\norm{G}_{L^4}^4&\leq \int_0^T\sigma^3\norm{G}_{L^2}\norm{G}_{L^6}^3\\
    &\leq C\int_0^T\sigma^3(\norm{\nabla u}_{L^2}+\norm{P}_{L^2})(\norm{\rho\dot{u}}_{L^2}^3+\norm{A}_{W^{1,6}}^3\norm{\nabla u}_{L^2}^3)\\
    &\leq C\int_0^T\sigma^3(\norm{\nabla u}_{L^2}+\norm{P}_{L^2})\norm{\rho\dot{u}}_{L^2}^3+C\norm{A}_{W^{1,6}}^3\int_0^T\sigma^3\norm{P}_{L^2}^4\\
    &\quad+C\norm{A}_{W^{1,6}}^3\int_0^T\sigma^3\norm{\nabla u}_{L^2}^4\\
    &\leq C(\tilde{\rho})(A_1^{\frac{1}{2}}(T)+\mathcal{E}_0^{\frac{1}{2}})A_1(T)A_2^{\frac{1}{2}}(T)
    +C(\tilde{\rho})\norm{A}_{W^{1,6}}^3\mathcal{E}_0\int_0^T\sigma^3\norm{P}_{L^2}^2\\
    &\quad+C\norm{A}_{W^{1,6}}^3\int_0^T\sigma^3\norm{\nabla u}_{L^2}^4,
    \end{aligned}
  \end{equation}
  and inequalities \eqref{a-priori-assumption} and \eqref{essential-energy} yields that
  \begin{equation}\label{P-L^4-estimate2}
    \begin{aligned}
    \int_0^T\int\sigma^3P^4&\leq C(\tilde{\rho})\mathcal{E}_0+C(\tilde{\rho})A_1^{\frac{3}{2}}(T)A_2^{\frac{1}{2}}(T)
    +C(\tilde{\rho})\norm{A}_{W^{1,6}}^3\mathcal{E}_0\int_0^T\sigma^3\norm{P}_{L^2}^2\\
    &\quad+C\norm{A}_{W^{1,6}}^3\int_0^T\sigma^3\norm{\nabla u}_{L^2}^4.
    \end{aligned}
  \end{equation}

  Next, we derive the estimate on $\displaystyle\int_0^T\sigma^3\norm{P}_{L^2}^2$. By introducing the Bogovskii operator $\mathcal{B}[P]$ as in Lemma \ref{lem-Bogovskii-operator}, multiplying $\eqref{Large-CNS-eq}_2$ by $\mathcal{B}[P]$ and integrating over $\Omega$ gives that
  \begin{equation}\label{P-L^2-estimate1}
    \begin{aligned}
    \int P^2&=\int(\rho u)_t\mathcal{B}[P]-\int\rho(u\otimes u):\nabla\mathcal{B}[P]+\mu\int\nabla u:\nabla\mathcal{B}[P]+(\mu+\lambda)\int\div u\div\mathcal{B}[P]\\
    &\leq\left(\int\rho u\mathcal{B}[P]\right)_t+\int\rho u\mathcal{B}[\div(Pu)]+(\gamma-1)\int\rho u\mathcal{B}[P\div u]\\
    &\quad+C\norm{\nabla u}_{L^2}\norm{P}_{L^2}+C(\tilde{\rho})\norm{u}_{L^6}^2\norm{P}_{L^{\frac{3}{2}}}\\
    &\leq\left(\int\rho u\mathcal{B}[P]\right)_t+C\norm{\rho}_{L^{\frac{3}{2}}}\norm{u}_{L^6}\norm{Pu}_{L^6}
    +C\norm{\rho}_{L^{\frac{3}{2}}}\norm{u}_{L^6}\norm{\mathcal{B}[P\div u]}_{L^6}\\
    &\quad+C\norm{\nabla u}_{L^2}\norm{P}_{L^2}+C(\tilde{\rho})\norm{u}_{L^6}^2\norm{P}_{L^{\frac{3}{2}}}\\
    &\leq\left(\int\rho u\mathcal{B}[P]\right)_t+C(\tilde{\rho})\mathcal{E}_0^{\frac{2}{3}}\norm{\nabla u}_{L^2}^2+\frac{1}{2}\norm{P}_{L^2}^2+C\norm{\nabla u}_{L^2}^2,
    \end{aligned}
  \end{equation}
  where we have used Gagliardo-Nirenberg inequality \eqref{GN-inequality1}, \eqref{a-priori-assumption}, Lemma \ref{lem-Bogovskii-operator}, \eqref{assume-condition1}, \eqref{essential-energy}.

  Then, it follows from \eqref{P-L^2-estimate1}, Lemma \eqref{lem-Bogovskii-operator}, \eqref{assume-condition1}, \eqref{a-priori-assumption}, \eqref{GN-inequality1}, \eqref{essential-energy} and \eqref{essential-estimate} that
  \begin{equation}\label{P-L^2-estimate2}
    \begin{aligned}
    \int_0^T\sigma^3\norm{P}_{L^2}^2&\leq C\int_0^{\sigma(T)}\sigma^2\norm{\rho}_{L^{\frac{3}{2}}}\norm{u}_{L^6}\norm{\mathcal{B}[P]}_{L^6}+(C(\tilde{\rho})\mathcal{E}_0^{\frac{2}{3}}+C)\int_0^T\sigma^3\norm{\nabla u}_{L^2}^2\\
    &\leq C(\tilde{\rho})\mathcal{E}_0^{\frac{5}{3}}+(C(\tilde{\rho})\mathcal{E}_0^{\frac{2}{3}}+C)\int_0^T\sigma^3\norm{\nabla u}_{L^2}^2.
    \end{aligned}
  \end{equation}

  Next, we return to the estimate on $A_2(T)$ in \eqref{A2-control}. By virtue of \eqref{P-L^4-estimate2}, \eqref{P-L^2-estimate2}, \eqref{new-gradient-u-estimate} and \eqref{nabla-u-L^4-estimate2}, we have
  \begin{equation}\label{A2-estimate-1}
    \begin{aligned}
    &\quad\int_0^T\sigma^3(\norm{\nabla u}_{L^4}^4+\norm{P|\nabla u|}_{L^2}^2+\norm{\nabla u}_{L^{\frac{8}{3}}}^4+\norm{A}_{W^{1,6}}^{\frac{4}{3}}\norm{\nabla u}_{L^2}^4)\\
    &\leq C\int_0^T\sigma^3(\norm{\nabla u}_{L^4}^4+\norm{P}_{L^4}^4+\norm{\nabla u}_{L^{\frac{8}{3}}}^4+\norm{A}_{W^{1,6}}^{\frac{4}{3}}\norm{\nabla u}_{L^2}^4)\\
    &\leq C\int_0^T\sigma^3[(\norm{\nabla u}_{L^2}+\norm{P}_{L^2})\norm{\rho\dot{u}}_{L^2}^3+\norm{\rho\dot{u}}_{L^2}^{\frac{3}{2}}\norm{\nabla u}_{L^2}^{\frac{5}{2}}+\norm{A}_{W^{1,6}}^{\frac{4}{3}}\norm{\nabla u}_{L^2}^4+\norm{P}_{L^4}^4+\norm{P}_{L^2}^4]\\
    &\leq C(\tilde{\rho})(A_1^{\frac{1}{2}}(T)+\mathcal{E}_0^{\frac{1}{2}})A_1(T)A_2^{\frac{1}{2}}(T)+C(\tilde{\rho})A_1^{\frac{7}{4}}(T)E_0^{\frac{1}{4}}
    +C\norm{A}_{W^{1,6}}^{\frac{4}{3}}A_1(T)E_0+C(\tilde{\rho})\mathcal{E}_0\\
    &\quad+(C(\tilde{\rho})\norm{A}_{W^{1,6}}^3+C)\mathcal{E}_0(C(\tilde{\rho})\mathcal{E}_0^{\frac{5}{3}}+(C(\tilde{\rho})\mathcal{E}_0^{\frac{2}{3}}+C)E_0)\\
    &\leq C(\tilde{\rho})\mathcal{E}_0(1+E_0)+C(\tilde{\rho})A_1^{\frac{3}{2}}(T)A_2^{\frac{1}{2}}(T)+C(\tilde{\rho})A_1^{\frac{7}{4}}(T)E_0^{\frac{1}{4}}
    +C\norm{A}_{W^{1,6}}^{\frac{4}{3}}A_1(T)E_0,
    \end{aligned}
  \end{equation}
  where we have used $\norm{A}_{W^{1,6}}\leq 1$, \eqref{a-priori-assumption}, \eqref{essential-energy}, and $\mathcal{E}_0\leq 1$.

  Then, subsituting \eqref{A2-estimate-1} into \eqref{A2-control} yields
  \begin{equation}\label{A2-estimate-2}
    \begin{aligned}
     A_2(T)&\leq C(\tilde{\rho})\mathcal{E}_0+CA_1^{\frac{3}{2}}(T)+CA_1(\sigma(T))+C(\tilde{\rho})(A_1^2(T)+A_1^{\frac{5}{3}}(T)E_0^{\frac{1}{3}}+A_1^2(T)E_0^{\frac{1}{2}})\\
    &\quad+C(\tilde{\rho})\mathcal{E}_0(1+E_0)+C(\tilde{\rho})A_1^{\frac{3}{2}}(T)A_2^{\frac{1}{2}}(T)+C(\tilde{\rho})A_1^{\frac{7}{4}}(T)E_0^{\frac{1}{4}}
    +C\norm{A}_{W^{1,6}}^{\frac{4}{3}}A_1(T)E_0\\
    &\leq C(\tilde{\rho})\mathcal{E}_0(1+E_0)+C(\tilde{\rho})A_1^{\frac{3}{2}}(T)(1+A_1^{\frac{1}{6}}(T)E_0^{\frac{1}{3}}+A_1^{\frac{1}{2}}(T)E_0^{\frac{1}{2}})+C\norm{A}_{W^{1,6}}^{\frac{4}{3}}A_1(T)E_0\\
    &\quad+CA_1(\sigma(T)),
    \end{aligned}
  \end{equation}
  provided $\mathcal{E}_0\leq 1$.

  Recalling \eqref{A-control3} and taking $m=1$, we have
  \begin{equation}\label{A1-estimate-small-T1}
    \begin{aligned}
      &\quad A_1(\sigma(T))\\
      &\leq C(\tilde{\rho})\mathcal{E}_0+C\int_0^{\sigma(T)}\int\sigma (P|\nabla u|^2+|\nabla u|^3)
      +C\int_0^{\sigma(T)}\sigma(\norm{\rho\dot{u}}_{L^2}\norm{\nabla u}_{L^2}^2+\norm{A}_{W^{1,6}}\norm{\nabla u}_{L^2}^3)\\
      &\leq C(\tilde{\rho})\mathcal{E}_0+C(\tilde{\rho})A_1(\sigma(T))\mathcal{E}_0^{\frac{1}{2}}
      +C(\tilde{\rho})\norm{A}_{W^{1,6}}A_1^{\frac{1}{2}}(\sigma(T))\mathcal{E}_0
     +C\int_0^{\sigma(T)}\sigma\norm{\nabla u}_{L^2}^{\frac{3}{2}}\norm{\nabla u}_{L^6}^{\frac{3}{2}}\\
      &\leq C(\tilde{\rho})\mathcal{E}_0+C(\tilde{\rho})A_1(\sigma(T))\mathcal{E}_0^{\frac{1}{2}}
      +C\int_0^{\sigma(T)}\sigma\norm{\nabla u}_{L^2}^{\frac{3}{2}}(\norm{\rho\dot{u}}_{L^2}+\norm{P}_{L^6}+\norm{\nabla u}_{L^2})^{\frac{3}{2}}\\
      &\leq C(\tilde{\rho})\mathcal{E}_0+C(\tilde{\rho})A_1(\sigma(T))\mathcal{E}_0^{\frac{1}{2}}
      +C(\int_0^{\sigma(T)}\norm{\nabla u}_{L^2}^2)^{\frac{3}{4}}(\int_0^{\sigma(T)}\norm{P}_{L^6}^6)^{\frac{1}{4}}
      +C(\tilde{\rho})A_1^{\frac{1}{2}}(\sigma(T))\mathcal{E}_0\\
      &\quad+C(\tilde{\rho})\sup_{t\in[0,\sigma(T)]}\sigma^{\frac{1}{2}}\norm{\sqrt{\rho}\dot{u}}_{L^2}(\int_0^{\sigma(T)}\norm{\nabla u}_{L^2}^2)^{\frac{3}{4}}(\int_0^{\sigma(T)}\sigma^2\norm{\sqrt{\rho}\dot{u}}_{L^2}^2)^{\frac{1}{4}}\\
    &\leq C(\tilde{\rho})\mathcal{E}_0+C(\tilde{\rho})A_1(\sigma(T))\mathcal{E}_0^{\frac{1}{2}}
    +C(\tilde{\rho},M)A_1^{\frac{1}{4}}(\sigma(T))\mathcal{E}_0^{\frac{3}{4}}\\
    &\leq (C(\tilde{\rho})\mathcal{E}_0^{\frac{1}{2}}+\frac{1}{4})A_1(\sigma(T))+C(\tilde{\rho},M)\mathcal{E}_0,
    \end{aligned}
  \end{equation}
  where we have applied
  \eqref{essential-estimate}, \eqref{Hodge-decomposition-exterior-Final}, \eqref{curl-G-modified-estimate1}, \eqref{essential-energy}, \eqref{a-priori-assumption}, \eqref{small-time-estimate2}, and the assumptions that $\mathcal{E}_0\leq 1$ and $\norm{A}_{W^{1,6}}\leq1$. The above inequality implies that
  \begin{equation}\label{A1-estimate-small-T2}
    A_1(\sigma(T))\leq C(\tilde{\rho},M)\mathcal{E}_0,
  \end{equation}
  provided
  \begin{equation}\label{small-condition-tran1}
    C(\tilde{\rho})\mathcal{E}_0^{\frac{1}{2}}\leq\frac{1}{4},\,\textrm{i.e.},\,\mathcal{E}_0\leq(4C(\tilde{\rho}))^{-2}.
  \end{equation}
  Then, we turn back to \eqref{A2-estimate-2} and get
  \begin{equation}\label{A2-estimate-3}
    \begin{aligned}
    A_2(T)&\leq C(\tilde{\rho},M)\mathcal{E}_0(1+E_0)+C(\tilde{\rho})A_1^{\frac{3}{2}}(T)(1+A_1^{\frac{1}{6}}(T)E_0^{\frac{1}{3}}
    +A_1^{\frac{1}{2}}(T)E_0^{\frac{1}{2}})\\
    &\quad+C\norm{A}_{W^{1,6}}^{\frac{4}{3}}A_1(T)E_0\\
    &\leq C(\tilde{\rho})A_1^{\frac{3}{2}}(T)+C(\tilde{\rho},M)\mathcal{E}_0(1+E_0)+C\norm{A}_{W^{1,6}}^{\frac{4}{3}}A_1(T)E_0\\
    &\leq C(\tilde{\rho})\mathcal{E}_0^{\frac{9}{16}}+C(\tilde{\rho},M)\mathcal{E}_0(1+E_0)+C\norm{A}_{W^{1,6}}^{\frac{4}{3}}\mathcal{E}_0^{\frac{3}{8}}E_0\\
    &\leq\mathcal{E}_0^{\frac{1}{2}},
    \end{aligned}
  \end{equation}
  provided
  \begin{equation*}
    \mathcal{E}_0^{\frac{3}{8}}E_0^2\leq 1,\quad \mathcal{E}_0\leq 1,\quad
    C(\tilde{\rho})\mathcal{E}_0^{\frac{1}{16}}\leq\frac{1}{3},\quad C(\tilde{\rho},M)\mathcal{E}_0^{\frac{1}{2}}(1+E_0)\leq\frac{1}{3},\quad
    C\norm{A}_{W^{1,6}}^{\frac{4}{3}}E_0\leq\frac{1}{3}\mathcal{E}_0^{\frac{1}{8}},
  \end{equation*}
  namely,
  \begin{equation}\label{small-condition-tran2}
    \mathcal{E}_0\leq\min\{1, E_0^{-\frac{16}{3}}, (3C(\tilde{\rho}))^{-16},(3C(\tilde{\rho},M)(E_0+1))^{-2}\},
  \end{equation}
  and
  \begin{equation}\label{small-condition-tran3}
    \norm{A}_{W^{1,6}}\leq\min\{1, (3CE_0)^{-\frac{3}{4}}\mathcal{E}_0^{\frac{3}{32}}\}.
  \end{equation}
  Thus, we finish the estimate on $A_2(T)$.

  It is easy to check that under the condition \eqref{small-condition-tran1} and \eqref{small-condition-tran2},
  \begin{equation}\label{A1-estimate-small-T3}
    A_1(\sigma(T))\leq\mathcal{E}_0^{\frac{1}{2}}\leq\mathcal{E}_0^{\frac{3}{8}}
  \end{equation}
  and also by \eqref{P-L^4-estimate2}, \eqref{P-L^2-estimate2}, \eqref{nabla-u-L^4-estimate2} and \eqref{A2-estimate-1},
  \begin{equation}\label{nabla-u-P-L^4-estimate1}
  \begin{aligned}
    \int_0^T\int\sigma^3P^4&\leq C(\tilde{\rho})A_1^{\frac{3}{2}}(T)A_2^{\frac{1}{2}}(T)+C(\tilde{\rho})\mathcal{E}_0(1+E_0)+C\norm{A}_{W^{1,6}}^3A_1(T)E_0\\
    &\leq C(\tilde{\rho})\mathcal{E}_0^{\frac{13}{16}}+C(\tilde{\rho})\mathcal{E}_0(1+E_0)+C\norm{A}_{W^{1,6}}^3\mathcal{E}_0^{\frac{3}{8}}E_0\\
    &\leq C(\tilde{\rho})\mathcal{E}_0^{\frac{13}{16}},
  \end{aligned}
  \end{equation}
  \begin{equation}\label{nabla-u-P-L^4-estimate2}
  \begin{aligned}
    \int_0^T\sigma^3\norm{\nabla u}_{L^4}^4&\leq C(\tilde{\rho})A_1^{\frac{3}{2}}(T)A_2^{\frac{1}{2}}(T)+C(\tilde{\rho})A_1^{\frac{7}{4}}(T)E_0^{\frac{1}{4}}\\
    &\quad+C\norm{A}_{W^{1,6}}^{\frac{3}{2}}A_1(T)E_0
    +C(\tilde{\rho})\mathcal{E}_0(1+E_0)\\
    &\leq C(\tilde{\rho})\mathcal{E}_0^{\frac{13}{16}}+C(\tilde{\rho})A_1^{\frac{7}{4}}(T)E_0^{\frac{1}{4}},
  \end{aligned}
  \end{equation}
  provided
  \begin{equation*}
    \mathcal{E}_0^{\frac{3}{16}}(1+E_0)\leq 1,\quad \norm{A}_{W^{1,6}}^{\frac{3}{2}}E_0\leq\mathcal{E}_0^{\frac{7}{16}},\quad\norm{A}_{W^{1,6}}\leq 1,
  \end{equation*}
  namely,
  \begin{equation}\label{small-condition-tran4}
    \mathcal{E}_0\leq(1+E_0)^{-\frac{16}{3}},\quad\norm{A}_{W^{1,6}}\leq\min\{1, E_0^{-\frac{2}{3}}\mathcal{E}_0^{\frac{7}{24}}\}.
  \end{equation}
  To estimate $A_1(T)$, it suffices to control $\displaystyle\int_{\sigma(T)}^T\int\sigma P^3$ and $\displaystyle\int_{\sigma(T)}^T\sigma\norm{\nabla u}_{L^3}^3$.
  From \eqref{nabla-u-P-L^4-estimate1}, \eqref{P-L^2-estimate2} and \eqref{essential-energy}, we obtain\footnote{During this calculation, we observe that $A_1(T)^{\frac{3}{4}}A_2^{\frac{1}{4}}(T)E_0^{\frac{1}{2}}\ll A_1(T)$ needs $A_2(T)\ll A_1(T)$. And in \eqref{A2-estimate-2} we also need $A^{\frac{3}{2}}(T)\ll A_2(T)$. Thus we can ensure the setting of a priori assumption \eqref{a-priori-assumption}.}
  \begin{equation}\label{A1-estimate-1}
    \begin{aligned}
    \int_{\sigma(T)}^T\int\sigma P^3&\leq\left(\int_{\sigma(T)}^T\norm{P}_{L^4}^4\right)^{\frac{1}{2}}\left(\int_{\sigma(T)}^T\norm{P}_{L^2}^2\right)^{\frac{1}{2}}\\
    &\leq C(\tilde{\rho})\mathcal{E}_0^{\frac{13}{32}}(1+E_0)^{\frac{1}{2}}.
    \end{aligned}
  \end{equation}
  However, a similar interpolation inequality yields that
  \begin{equation}\label{A1-estimate-2}
    \begin{aligned}
    \int_{\sigma(T)}^T\sigma\norm{\nabla u}_{L^3}^3&\leq\int_{\sigma(T)}^T\norm{\nabla u}_{L^2}\norm{\nabla u}_{L^4}^2\leq\left(\int_{\sigma(T)}^T\norm{\nabla u}_{L^2}^2\right)^{\frac{1}{2}}\left(\int_{\sigma(T)}^T\norm{\nabla u}_{L^4}^4\right)^{\frac{1}{2}}
    \end{aligned}
  \end{equation}
  which is unlikely\footnote{The processes in the interpolation and the control of $\int_0^T\sigma^3\norm{\nabla u}_{L^4}^4$ both use the bad term $\int_0^T\norm{\nabla u}_{L^2}^2$, and then make the estimate on $\int_0^T\sigma\norm{\nabla u}_{L^3}^3$ lose more smallness.} to be smaller than $A_1(T)$ due to the term $A_1^{\frac{7}{4}}(T)E_0^{\frac{1}{4}}$ in \eqref{nabla-u-P-L^4-estimate2}. Therefore, we must pursue
another route by resorting to the boundary-adapted nonlinear localization technique from Remark \ref{rem-small-A}.

  We now return to the estimate on $\norm{\nabla u}_{L^4}^4$ as discussed in Remark \ref{rem-small-A}. Similar as \eqref{nabla-u-L^4-estimate2}, it holds that
  \begin{equation}\label{nabla-u-L^3-estimate}
    \begin{aligned}
    \norm{\nabla u}_{L^3}^3&\leq C(\norm{G}_{L^3}+\norm{\curl u}_{L^3}+\norm{P}_{L^3}+\norm{\nabla u}_{L^{\frac{8}{3}}})^3\\
    &\leq C(\norm{G}_{L^2}^{\frac{3}{2}}\norm{\nabla G}_{L^2}^{\frac{3}{2}}+\norm{\curl u}_{L^2}^{\frac{3}{2}}\norm{\nabla\curl u}_{L^2}^{\frac{3}{2}}+\norm{P}_{L^3}^3+\norm{\nabla u}_{L^{\frac{8}{3}}}^3)\\
    &\leq C(\norm{\nabla u}_{L^2}+\norm{P}_{L^2})^{\frac{3}{2}}(\norm{\rho\dot{u}}_{L^2}+\norm{A}_{W^{1,6}}\norm{\nabla u}_{L^2})^{\frac{3}{2}}+C(\norm{P}_{L^3}^3+\norm{P}_{L^{\frac{8}{3}}}^3)\\
    &\quad+C(\norm{\nabla u}_{L^2}+\norm{P}_{L^2})^{\frac{15}{8}}(\norm{\rho\dot{u}}_{L^2}+\norm{A}_{W^{1,6}}\norm{\nabla u}_{L^2})^{\frac{9}{8}}\\
    &\leq C(\norm{\nabla u}_{L^2}+\norm{P}_{L^2})^{\frac{3}{2}}\norm{\rho\dot{u}}_{L^2}^{\frac{3}{2}}+C(\norm{\nabla u}_{L^2}+\norm{P}_{L^2})^{\frac{15}{8}}\norm{\rho\dot{u}}_{L^2}^{\frac{9}{8}}\\
    &\quad+C\norm{A}_{W^{1,6}}^{\frac{9}{8}}\norm{\nabla u}_{L^2}^3+C(\norm{P}_{L^3}^3+\norm{P}_{L^2}^3),
    \end{aligned}
  \end{equation}
  provided $\norm{A}_{W^{1,6}}\leq 1$.

  Thus, we get from \eqref{nabla-u-L^3-estimate}, \eqref{A1-estimate-1}, \eqref{P-L^2-estimate2}, \eqref{essential-energy}, \eqref{essential-estimate} and \eqref{a-priori-assumption} that
  \begin{equation}\label{A1-estimate-3}
    \begin{aligned}
    \int_{\sigma(T)}^T\sigma\norm{\nabla u}_{L^3}^3&\leq C(\tilde{\rho})(A_1^{\frac{7}{4}}(T)E_0^{\frac{1}{4}}+A_1^{\frac{3}{4}}(T)\mathcal{E}_0^{\frac{1}{2}}(1+E_0)^{\frac{1}{4}}
    +A_1^{\frac{17}{16}}(T)E_0^{\frac{7}{16}}+A_1^{\frac{9}{16}}(T)\mathcal{E}_0^{\frac{1}{2}}(1+E_0)^{\frac{7}{16}})\\
    &\quad+C\norm{A}_{W^{1,6}}^{\frac{9}{8}}A_1^{\frac{1}{2}}(T)E_0+C(\tilde{\rho})\mathcal{E}_0^{\frac{13}{32}}(1+E_0)^{\frac{1}{2}}
    +C(\tilde{\rho})\mathcal{E}_0^{\frac{1}{2}}(1+E_0)\\
    &\leq C(\tilde{\rho})A_1^{\frac{17}{16}}(T)E_0^{\frac{7}{16}}
    +C\norm{A}_{W^{1,6}}^{\frac{9}{8}}A_1^{\frac{1}{2}}(T)E_0+C(\tilde{\rho})\mathcal{E}_0^{\frac{1}{2}}(1+E_0),
    \end{aligned}
  \end{equation}
  provided $\norm{A}_{W^{1,6}}\leq 1$.

  Then, plugging \eqref{A1-estimate-1}, \eqref{A1-estimate-3}, \eqref{A1-estimate-small-T1}, \eqref{A1-estimate-small-T2} and \eqref{a-priori-assumption} into \eqref{A1-control} yields that
  \begin{equation}\label{A1-estimate-4}
    \begin{aligned}
    A_1(T)&\leq C(\tilde{\rho})\mathcal{E}_0+C(\tilde{\rho})A_1^{\frac{3}{2}}(T)+C\norm{A}_{W^{1,6}}^{\frac{3}{2}}A_1^{\frac{1}{2}}(T)E_0+C(\tilde{\rho},M)\mathcal{E}_0
    +C(\tilde{\rho})\mathcal{E}_0^{\frac{13}{32}}(1+E_0)^{\frac{1}{2}}\\
    &\quad+C(\tilde{\rho})A_1^{\frac{17}{16}}(T)E_0^{\frac{7}{16}}
    +C\norm{A}_{W^{1,6}}^{\frac{9}{8}}A_1^{\frac{1}{2}}(T)E_0+C(\tilde{\rho})\mathcal{E}_0^{\frac{1}{2}}(1+E_0)\\
    &\leq C(\tilde{\rho},M)\mathcal{E}_0+C(\tilde{\rho})\mathcal{E}_0^{\frac{3}{8}}(\mathcal{E}_0^{\frac{3}{128}}E_0^{\frac{7}{16}}+\mathcal{E}_0^{\frac{1}{8}}(1+E_0))
    +C\norm{A}_{W^{1,6}}^{\frac{9}{8}}\mathcal{E}_0^{\frac{3}{16}}(T)E_0,\\
    &\leq \mathcal{E}_0^{\frac{3}{8}},
    \end{aligned}
  \end{equation}
  provided
  \begin{equation}\label{small-condition-tran5}
    \mathcal{E}_0\leq\min\{1, (4C(\tilde{\rho}))^{-\frac{128}{3}}E_0^{-\frac{56}{3}}, (4C(\tilde{\rho},M))^{-\frac{8}{5}}, (4C(\tilde{\rho})(1+E_0))^{-8}\},
  \end{equation}
  and
  \begin{equation}\label{small-condition-tran6}
    \norm{A}_{W^{1,6}}\leq\min\{1,  (4CE_0)^{-\frac{8}{9}}\mathcal{E}_0^{\frac{1}{6}}\}.
  \end{equation}
  Thus, we have completed the proof of Lemma \ref{lem-control-A1-A2}.
\end{proof}

\begin{rem}
  Note that in this lemma, the condition $\norm{\nabla u_0}_{L^2}\leq M$ is only applied in \eqref{A1-estimate-small-T1} through Lemma \ref{lem-small-time-estimate}.
\end{rem}

The following lemma concerns the bound on density $\rho$.

\begin{lem}\label{lem-rho-bound}
  Under the conditions of Proposition \ref{prop-a-priori-estimate}, it holds that for any $(x,t)\in\Omega\times[0,T]$,
  \begin{equation}\label{rho-bound}
    0\leq\rho(x,t)\leq\frac{7\tilde{\rho}}{4},
  \end{equation}
  provided
  \begin{equation}\label{small-assumption4}
    \mathcal{E}_0\leq\epsilon_5=\min\left\{\epsilon_4,\left(\frac{\tilde{\rho}}{2C(\tilde{\rho},M)}\right)^{-\frac{32}{3}},\left(\frac{\tilde{\rho}}{4C(\tilde{\rho})(1+E_0)}\right)^8\right\},
  \end{equation}
  and
  \begin{equation}\label{small-A-assumption2}
    \norm{A}_{W^{1,6}}\leq 1,\quad\norm{A}_{W^{1,\infty}}\leq\mathcal{E}_0^{-\frac{1}{8}}.
  \end{equation}
\end{lem}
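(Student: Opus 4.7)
The lower bound $\rho\geq 0$ follows from the transport structure of $\eqref{Large-CNS-eq}_1$ (along characteristics $\rho$ evolves multiplicatively from $\rho_0\geq0$). For the upper bound I would apply Zlotnik's inequality (Lemma \ref{lem-Zlotnik-inequality}) along the particle trajectories $X(t,x)$ defined by $\dot X=u(X,t)$, $X(0)=x$. Setting $y(t):=\rho(X(t,x),t)$ and using $(2\mu+\lambda)\operatorname{div} u=G+(P-P(\rho_\infty))$ with $\rho_\infty=0$, the continuity equation rewrites as
\begin{equation*}
y'(t)=g(y)+b'(t),\qquad g(y):=-\frac{a\,y^{\gamma+1}}{2\mu+\lambda},\qquad b'(t):=-\frac{y(t)\,G(X(t,x),t)}{2\mu+\lambda}.
\end{equation*}
Since $\gamma>1$, $g(\infty)=-\infty$, and for any $N_1>0$ the value $\zeta_0=\bigl((2\mu+\lambda)N_1/a\bigr)^{1/(\gamma+1)}$ fulfils \eqref{Zlotnik-condition2}. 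I would pick $N_1$ so that $\zeta_0\leq\frac{3}{2}\tilde\rho$, which is possible since $\tilde\rho\geq 1$.

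The decisive step is to verify $b(t_2)-b(t_1)\leq N_0+N_1(t_2-t_1)$ with $N_0$ small. The a priori bound $\rho\leq 2\tilde\rho$ in \eqref{a-priori-assumption} reduces the task to controlling $\int_{t_1}^{t_2}\|G(\cdot,s)\|_{L^\infty}\,ds$. I would use Gagliardo--Nirenberg $\|G\|_{L^\infty}\leq C\|G\|_{L^6}^{1/2}\|\nabla G\|_{L^6}^{1/2}$ (justified because $\rho_\infty=0$ forces $G\to 0$ at infinity), coupled with \eqref{curl-effective-estimate2} for $\|G\|_{L^6}$ and the refined estimate \eqref{nabla-u-L^6-estimate} for $\|\nabla G\|_{L^6}$. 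On the short interval $[0,\sigma(T)]$ (length at most $1$), Hölder in time combined with Lemma \ref{lem-small-time-estimate}, \eqref{essential-estimate} and \eqref{control-A1-A2} bounds the integral by $C(\tilde\rho,M,E_0)\mathcal{E}_0^{\alpha}$ with some $\alpha>0$. On $[\sigma(T),T]$ I insert the weight $\sigma^{3/2}$, apply Hölder in time, and exploit $A_2(T)\leq\mathcal{E}_0^{1/2}$ and $A_1(T)\leq\mathcal{E}_0^{3/8}$ from \eqref{control-A1-A2}, using $\|\rho\dot u\|_{L^6}\leq C(\tilde\rho)\|\nabla\dot u\|_{L^2}$ via Lemma \ref{lem-dot-u} (the $\|\nabla u\|_{L^{8/3}}^2$ remainder is reabsorbed via \eqref{new-gradient-u-estimate} and $A_1$). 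The non-integrable-in-$t$ contributions become linear in $(t_2-t_1)$ and feed into $N_1$, while the small contributions feed into $N_0$.

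\textbf{Main obstacle.} The hardest point is absorbing the boundary-generated terms in \eqref{nabla-u-L^6-estimate}, in particular $(1+\|A\|_{W^{1,\infty}})\|\nabla u\|_{L^2}+\|A\|_{W^{1,6}}\|\rho\dot u\|_{L^2}$, into a small multiple of a power of $\mathcal{E}_0$. This is where the relaxed smallness $\|A\|_{W^{1,\infty}}\leq\mathcal{E}_0^{-1/8}$ in \eqref{small-A-assumption2} is used: together with $\|\nabla u\|_{L^2}^2\lesssim\mathcal{E}_0$ from \eqref{essential-estimate}, this term contributes at most $\mathcal{E}_0^{-1/8}\cdot\mathcal{E}_0^{1/2}=\mathcal{E}_0^{3/8}$, still small. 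Once all contributions are assembled one obtains $N_0\leq C(\tilde\rho,M,E_0)\mathcal{E}_0^{\alpha}$, and Zlotnik's inequality applied along every trajectory yields
\begin{equation*}
\rho(X(t,x),t)\leq\max\{\rho_0(x),\zeta_0\}+N_0\leq\tfrac{3}{2}\tilde\rho+\tfrac{\tilde\rho}{4}=\tfrac{7}{4}\tilde\rho,
\end{equation*}
provided $\mathcal{E}_0$ satisfies \eqref{small-assumption4}; since $X(t,\cdot):\Omega\to\Omega$ is a homeomorphism, this yields \eqref{rho-bound}.
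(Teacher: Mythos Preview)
Your proposal is correct and follows essentially the same route as the paper: rewrite the continuity equation along trajectories as $D_t\rho=g(\rho)+b'(t)$ with $g(\rho)=-\rho P/(2\mu+\lambda)$ and $b'(t)=-\rho G/(2\mu+\lambda)$, control $\|G\|_{L^\infty}$ via $\|G\|_{L^6}^{1/2}\|\nabla G\|_{L^6}^{1/2}$ using the refined bound \eqref{nabla-u-L^6-estimate}, split into the short interval $[0,\sigma(T)]$ and the long interval $[\sigma(T),T]$, and close with Zlotnik's lemma. The only cosmetic differences are that the paper applies Zlotnik \emph{twice} (first on $[0,\sigma(T)]$ with $N_1=0$, $\zeta_0=\tilde\rho$ to get $\rho\le\tfrac{3}{2}\tilde\rho$, then on $[\sigma(T),T]$ with $N_1=\tfrac{\tilde\rho P(\tilde\rho)}{2\mu+\lambda}$), and on the long interval it extracts the linear-in-time part via Young's inequality $\int_{t_1}^{t_2}\|G\|_{L^\infty}\le \tfrac{P(\tilde\rho)}{2\mu+\lambda}(t_2-t_1)+C\int_{\sigma(T)}^T\|G\|_{L^\infty}^4$ rather than H\"older directly; both are equivalent. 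Two minor slips: the bound $\|\rho\dot u\|_{L^6}\le C(\tilde\rho)\|\nabla\dot u\|_{L^2}$ is Gagliardo--Nirenberg \eqref{GN-inequality1}, not Lemma~\ref{lem-dot-u}; and ``$\|\nabla u\|_{L^2}^2\lesssim\mathcal{E}_0$ from \eqref{essential-estimate}'' is a time-integrated bound on $[0,\sigma(T)]$, not pointwise (your subsequent use of it is nonetheless correct after Cauchy--Schwarz in $t$). With your choice $\zeta_0\le\tfrac{3}{2}\tilde\rho$ you would need $N_0\le\tfrac{\tilde\rho}{4}$, which is slightly tighter than what the specific constants in \eqref{small-assumption4} deliver; taking $\zeta_0=\tilde\rho$ as the paper does gives the stated thresholds exactly.
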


\begin{proof}
  First, we rewrite the equation of mass conservation $\eqref{Large-CNS-eq}_1$ as
  \begin{equation}\label{mass-equation}
    D_t\rho=g(\rho)+b'(t),
  \end{equation}
  where
  \begin{equation*}
    D_t\rho=\rho_t+u\cdot\nabla\rho,\quad g(\rho)=-\frac{\rho P}{2\mu+\lambda},\quad b(t)=\frac{-1}{2\mu+\lambda}\int_0^t\rho G.
  \end{equation*}
  Then for $t\in[0,\sigma(T)]$, we obtain from \eqref{essential-energy}, \eqref{a-priori-assumption}, \eqref{GN-inequality2}, \eqref{A1-estimate-small-T3}, Remark \ref{rem-small-A}, and Lemma \ref{lem-small-time-estimate} that for any $0\leq t_1<t_2\leq\sigma(T)$,
  \begin{equation}\label{b-estimate1}
    \begin{aligned}
    &\quad|b(t_2)-b(t_1)|\leq C(\tilde{\rho})\int_0^{\sigma(T)}\norm{G}_{L^{\infty}}
    \leq C(\tilde{\rho})\int_0^{\sigma(T)}\norm{G}_{L^6}^{\frac{1}{2}}\norm{\nabla G}_{L^6}^{\frac{1}{2}}\\
    &\leq C(\tilde{\rho})\int_0^{\sigma(T)}(\norm{\rho\dot{u}}_{L^2}+\norm{A}_{W^{1,6}}\norm{\nabla u}_{L^2})^{\frac{1}{2}}(\norm{\rho\dot{u}}_{L^6}+\norm{A}_{W^{1,6}}\norm{\rho\dot{u}}_{L^2}\\
    &\qquad\qquad+(1+\norm{A}_{W^{1,\infty}})\norm{\nabla u}_{L^2}+\norm{P}_{L^6})^{\frac{1}{2}}\\
    &\leq C(\tilde{\rho})\left(\int_0^{\sigma(T)}t\norm{\nabla\dot{u}}_{L^2}^2\right)^{\frac{1}{4}}
    \left(\int_0^{\sigma(T)}t^{-\frac{1}{3}}\norm{\sqrt{\rho}\dot{u}}_{L^2}^{\frac{2}{3}}\right)^{\frac{3}{4}}
    +C(\tilde{\rho})A_1^{\frac{1}{4}}(\sigma(T))\int_0^{\sigma(T)}t^{-\frac{1}{2}}(t\norm{\nabla\dot{u}}_{L^2}^2)^{\frac{1}{4}}\\
    &\quad+C(\tilde{\rho},M)\int_0^{\sigma(T)}t^{-\frac{1}{2}}(t\norm{\sqrt{\rho}\dot{u}}_{L^2}^2)^{\frac{1}{4}}
    +C(\tilde{\rho})\int_0^{\sigma(T)}((1+\norm{A}_{W^{1,\infty}})\norm{\nabla u}_{L^2}+\norm{P}_{L^6})\\
    &\leq C(\tilde{\rho},M)\left(\int_0^{\sigma(T)}t^{-\frac{2}{3}}(t\norm{\sqrt{\rho}\dot{u}}_{L^2}^2)^{\frac{1}{4}}\right)^{\frac{3}{4}}
    +C(\tilde{\rho},M)A_1^{\frac{1}{4}}(\sigma(T))+C(\tilde{\rho})((1+\norm{A}_{W^{1,\infty}})\mathcal{E}_0^{\frac{1}{2}}+\mathcal{E}_0^{\frac{1}{6}})\\
    &\leq C(\tilde{\rho},M)A_1^{\frac{3}{16}}(\sigma(T))+C(\tilde{\rho})(\mathcal{E}_0^{\frac{1}{6}}+\norm{A}_{W^{1,\infty}}\mathcal{E}_0^{\frac{1}{2}})\\
    &\leq C(\tilde{\rho},M)(\mathcal{E}_0^{\frac{3}{32}}+\mathcal{E}_0^{\frac{1}{6}})\leq C(\tilde{\rho},M)\mathcal{E}_0^{\frac{3}{32}},
    \end{aligned}
  \end{equation}
   provided
   \begin{equation}\label{rho-small-transition1}
     \mathcal{E}_0\leq\epsilon_4,\quad \norm{A}_{W^{1,6}}\leq 1,\quad \norm{A}_{W^{1,\infty}}\leq\mathcal{E}_0^{-\frac{1}{3}}.
   \end{equation}

   Then, by choosing $N_1=0$, $N_0=C(\tilde{\rho},M)\mathcal{E}_0^{\frac{3}{32}}$, and $\zeta_0=\tilde{\rho}$ in Lemma \ref{lem-Zlotnik-inequality}, we have from \eqref{mass-equation} and \eqref{b-estimate1} that
   \begin{equation}\label{rho-bound-small-T}
     \sup_{t\in[0,\sigma(T)]}\norm{\rho}_{L^{\infty}}\leq\tilde{\rho}+C(\tilde{\rho},M)\mathcal{E}_0^{\frac{3}{32}}\leq\frac{3\tilde{\rho}}{2},
   \end{equation}
   provided
   \begin{equation}\label{rho-small-transition2}
     \mathcal{E}_0\leq\left(\frac{\tilde{\rho}}{2C(\tilde{\rho},M)}\right)^{-\frac{32}{3}}.
   \end{equation}

   For $t\in[\sigma(T),T]$ and any $\sigma(T)\leq t_1<t_2\leq T$, we also have
   \begin{equation}\label{b-estimate2}
     \begin{aligned}
     |b(t_2)-b(t_1)|&\leq C\tilde{\rho}\int_{t_1}^{t_2}\norm{G}_{L^{\infty}}\\
     &\leq \frac{\tilde{\rho}P(\tilde{\rho})}{2\mu+\lambda}(t_2-t_1)+C(\tilde{\rho})\int_{\sigma(T)}^{T}\norm{G}_{L^{\infty}}^4\\
     &\leq \frac{\tilde{\rho}P(\tilde{\rho})}{2\mu+\lambda}(t_2-t_1)+C(\tilde{\rho})\mathcal{E}_0^{\frac{1}{8}}(1+E_0),
     \end{aligned}
   \end{equation}
   where we have used Remark \ref{rem-small-A},  \eqref{essential-energy} and \eqref{a-priori-assumption} to get
   \begin{equation*}
     \begin{aligned}
     &\quad\int_{\sigma(T)}^{T}\norm{G}_{L^{\infty}}^4
     \leq\int_{\sigma(T)}^{T}\norm{G}_{L^6}^2\norm{\nabla G}_{L^6}^2\\
     &\leq C\int_{\sigma(T)}^{T}(\norm{\rho\dot{u}}_{L^2}^2+\norm{\nabla u}_{L^2}^2)(\norm{\rho\dot{u}}_{L^6}^2+\norm{\rho\dot{u}}_{L^2}^2+(1+\norm{A}_{W^{1,\infty}}^2)\norm{\nabla u}_{L^2}^2+\norm{P}_{L^6}^2)\\
     &\leq C(\tilde{\rho})\int_{\sigma(T)}^{T}(\norm{\sqrt{\rho}\dot{u}}_{L^2}^2+\norm{\nabla u}_{L^2}^2)(\norm{\nabla\dot{u}}_{L^2}^2+\norm{\sqrt{\rho}\dot{u}}_{L^2}^2+(1+\norm{A}_{W^{1,\infty}}^2)\norm{\nabla u}_{L^2}^2+\norm{P}_{L^6}^2)\\
     &\leq C(\tilde{\rho})(A_1(T)A_2(T)+A_2^2(T)+(1+\norm{A}_{W^{1,\infty}}^2)A_1^2(T)+A_1(T)\mathcal{E}_0^{\frac{1}{3}})\\
     &\quad+C(\tilde{\rho})((1+\norm{A}_{W^{1,\infty}}^2)A_1(T)+\mathcal{E}_0^{\frac{1}{3}})E_0\\
     &\leq C(\tilde{\rho})\mathcal{E}_0^{\frac{1}{8}}(1+E_0),
     \end{aligned}
   \end{equation*}
   provided
   \begin{equation}\label{rho-small-transition3}
     \mathcal{E}_0\leq 1,\quad\norm{A}_{W^{1,6}}\leq 1,\quad\norm{A}_{W^{1,\infty}}\leq\mathcal{E}_0^{-\frac{1}{8}}.
   \end{equation}

   Therefore, by choosing $N_0=C(\tilde{\rho})\mathcal{E}_0^{\frac{1}{8}}(1+E_0)$, $N_1=\frac{\tilde{\rho}P(\tilde{\rho})}{2\mu+\lambda}$ and $\zeta_0=\tilde{\rho}$ in Lemma \ref{lem-Zlotnik-inequality}, we have from Lemma \ref{lem-Zlotnik-inequality}, \eqref{rho-bound-small-T} and \eqref{b-estimate2} that
   \begin{equation}\label{rho-bound-large-T}
     \sup_{t\in[\sigma(T),T]}\norm{\rho}_{L^{\infty}}\leq\frac{3\tilde{\rho}}{2}
     +C(\tilde{\rho})\mathcal{E}_0^{\frac{1}{8}}(1+E_0)\leq\frac{7\tilde{\rho}}{4},
   \end{equation}
   provided
   \begin{equation}\label{rho-small-transition4}
     \mathcal{E}_0\leq\left(\frac{\tilde{\rho}}{4C(\tilde{\rho})(1+E_0)}\right)^8.
   \end{equation}
   Then, combining \eqref{rho-bound-small-T} and \eqref{rho-bound-large-T}, we complete the proof of Lemma \ref{lem-rho-bound}.
\end{proof}

\noindent\textbf{Proof of Theorem \ref{thm-global-CNS-exterior}} In the following, we will prove the main results of this paper. First of all, we derive the time-dependent higher-order estimates of the smooth solution $(\rho, u)$.
From now on, we will always assume that \eqref{small-assumption4} holds and denote the positive constant by $C$ depending on
\begin{equation*}
  T,\,\norm{g}_{L^2},\,\norm{\nabla u_0}_{H^1},\,\norm{\rho_0}_{W^{2,q}},\,\norm{P(\rho)}_{W^{2,q}},
\end{equation*}
for $q\in(3,6)$, as well as $\mu,\lambda,\gamma,a,\tilde{\rho},\Omega,M$ and the matrix $A$, where $g$ is given in \eqref{compatibility-condition}. Here, we only sketch the higher-order estimates in the following lemma, which have been proved in \cite{Cai-Li-Lv2021}.

\begin{lem}\label{lem-higher-order-estimate}
  Under the conditions of Theorem \ref{thm-global-CNS-exterior}, it holds that
  \begin{equation*}
    \begin{aligned}
    &\sup_{t\in[0,T]}\int\rho|\dot{u}|^2+\int_0^T\norm{\nabla\dot{u}}_{L^2}^2\leq C,\\
    &\sup_{t\in[0,T]}(\norm{\nabla\rho}_{L^2\cap L^6}+\norm{\nabla u}_{H^1})+\int_0^T(\norm{\nabla u}_{L^{\infty}}+\norm{\nabla^2u}_{L^6}^2)\leq C,\\
    &\sup_{t\in[0,T]}\norm{\sqrt{\rho}u_t}_{L^2}^2+\int_0^T\norm{\nabla u_t}_{L^2}^2\leq C,\\
    &\sup_{t\in[0,T]}(\norm{\rho}_{H^2}+\norm{P}_{H^2})\leq C,\\
    &\sup_{t\in[0,T]}(\norm{\rho_t}_{H^1}+\norm{P_t}_{H^1})+\int_0^T(\norm{\rho_{tt}}_{L^2}^2+\norm{P_{tt}}_{L^2}^2)\leq C,\\
    &\sup_{t\in[0,T]}\sigma\norm{\nabla u_t}_{L^2}^2+\int_0^T\sigma\norm{\sqrt{\rho}u_{tt}}_{L^2}^2\leq C,\\
    &\sup_{t\in[0,T]}\sigma\norm{\nabla u}_{H^2}^2+\int_0^T(\norm{\nabla u}_{H^2}^2+\norm{\nabla^2u}_{W^{1,q}}^{p_0}+\sigma\norm{\nabla u_t}_{H^1}^2)\leq C,\\
    &\sup_{t\in[0,T]}(\norm{\rho}_{W^{2,q}}+\norm{P}_{W^{2,q}})\leq C,\\
    &\sup_{t\in[0,T]}\sigma(\norm{\nabla u_t}_{H^1}+\norm{\nabla u}_{W^{2,q}})+\int_0^T\sigma^2\norm{\nabla u_{tt}}_{L^2}^2\leq C,
    \end{aligned}
  \end{equation*}
  for $q\in(3,6)$ and $p_0=\frac{9q-6}{10q-12}\in(1,\frac{7}{6})$.
\end{lem}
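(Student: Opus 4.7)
The plan is to bootstrap from the uniform bounds already obtained in Proposition \ref{prop-a-priori-estimate} (in particular $\rho\leq 2\tilde{\rho}$ on $\Omega\times[0,T]$, $A_1(T)\leq\mathcal{E}_0^{3/8}$, $A_2(T)\leq\mathcal{E}_0^{1/2}$) and the basic energy estimate \eqref{essential-energy}, and then successively differentiate the system to climb the Sobolev ladder. The first step is to remove the $\sigma$-weight in the $\sqrt{\rho}\dot u$ estimate. Differentiating $\eqref{Large-CNS-eq}_2$ in $t$, testing with $\dot u$, and integrating by parts exactly as in \eqref{A-control-1}--\eqref{A-control-3} but with $m=0$, produces a Gronwall-type inequality for $\int\rho|\dot u|^2+\int_0^t\|\nabla\dot u\|_{L^2}^2$. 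The initial value is controlled via the compatibility condition \eqref{compatibility-condition}, which yields $\sqrt{\rho_0}\dot u(\cdot,0)=-g\in L^2$. With $A_1(T),A_2(T)$ already small, the bad terms $C(\tilde\rho)\|\sqrt\rho\dot u\|_{L^2}^2(\|\nabla u\|_{L^2}^2+\|\nabla u\|_{L^2}^4)$ that previously required the $\sigma$-weight now admit a Gronwall closure on $[0,T]$, producing the first two bounds.

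Next I would obtain the $L^2\cap L^6$ bound for $\nabla\rho$ and the $H^1$ bound for $\nabla u$. Writing the mass equation as $(\nabla\rho)_t+u\cdot\nabla(\nabla\rho)+\nabla u\cdot\nabla\rho+\rho\nabla\div u+\div u\,\nabla\rho=0$, testing with $|\nabla\rho|^{p-2}\nabla\rho$ for $p\in\{2,6\}$ gives $\frac{d}{dt}\|\nabla\rho\|_{L^p}\leq C(1+\|\nabla u\|_{L^\infty})\|\nabla\rho\|_{L^p}+C\|\nabla^2 u\|_{L^p}$. The $\nabla^2 u$ terms are recovered from the Lamé-type elliptic system satisfied by $u$: combining Lemma \ref{lem-Hodge-decomposition-exterior-F} with Remark \ref{rem-small-A}, one has $\|\nabla^2 u\|_{L^p}\leq C(\|\rho\dot u\|_{L^p}+\|\nabla P\|_{L^p}+\text{l.o.t.})$, and $\|\nabla u\|_{L^\infty}$ is handled via $\|\nabla u\|_{L^\infty}\lesssim\|\nabla u\|_{L^2}+\|\nabla\div u\|_{L^6}\log(e+\|\nabla^2 u\|_{L^6})$ (logarithmic Beale--Kato--Majda). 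Closing via Gronwall with the $L^1_t L^\infty_x$-integrability of $\nabla u$ yields simultaneously the bounds on $\|\nabla\rho\|_{L^2\cap L^6}$, $\|\nabla u\|_{H^1}$, and $\int_0^T(\|\nabla u\|_{L^\infty}+\|\nabla^2 u\|_{L^6}^2)$.

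With $\nabla u\in L^1_t L^\infty_x$ in hand, the remaining estimates are routine. The $\|\sqrt\rho u_t\|_{L^2}$ bound follows from $u_t=\dot u-u\cdot\nabla u$, combined with $\|u\cdot\nabla u\|_{L^2}\leq\|u\|_{L^6}\|\nabla u\|_{L^3}$; then $\int_0^T\|\nabla u_t\|_{L^2}^2$ follows from the already-established $\int_0^T\|\nabla\dot u\|_{L^2}^2$. Differentiating the mass/pressure equations spatially twice and using standard product estimates produces the $H^2$ and $W^{2,q}$ bounds for $\rho,P$; time-differentiating once gives the $\rho_t,P_t$ bounds and, upon a further time derivative, the $L^2_tL^2_x$ bounds on $\rho_{tt},P_{tt}$. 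The $\sigma$-weighted higher estimates on $u_t,u_{tt}$ are obtained by taking $\pa_t$ of the momentum equation, testing against $\sigma u_{tt}$ (respectively $\sigma^2 u_{tt}$), and using \eqref{Navier-slip-condition} with the boundary-term manipulations of \eqref{boundary-term-A2-4}; the singular weight $\sigma$ absorbs the lack of initial regularity for $u_{tt}$. The $W^{2,q}$ bound for $\nabla u$ and the $W^{1,q}$ bound for $\nabla^2 u$ (with the technical exponent $p_0=\frac{9q-6}{10q-12}$) come from the elliptic Lamé estimate \eqref{G-curl-estimate-p2} applied to the equation $-\mu\Delta u-(\mu+\lambda)\nabla\div u=-\rho\dot u-\nabla P$.

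The main obstacle is the first step: closing the time-dependent $\sqrt\rho\dot u$ estimate on the whole interval $[0,T]$ without a $\sigma$-weight, while still respecting the slip boundary condition. The boundary integrals from \eqref{boundary-term-A2-1}, \eqref{boundary-term-A2-2}, \eqref{boundary-term-A2-4} now must be absorbed without help from $\sigma$, and the exterior-domain Hodge decomposition \eqref{Hodge-decomposition-exterior-Final} forces the auxiliary $\|\nabla u\|_{L^2}$ term that appears in all elliptic estimates. Leveraging the fact that $A_1(T)$ is small (so $\|\nabla u\|_{L^2}$ is $L^2_t$-integrable with a smallness gain) lets Gronwall close, provided $\|A\|_{W^{1,6}}$ satisfies the constraints already imposed in Proposition \ref{prop-a-priori-estimate}. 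Once this first step is done, everything else is a fairly mechanical propagation of regularity, and at this point the classical-solution conclusion \eqref{classical-sol} follows from Lemma \ref{lem-local-sol} together with the standard continuation argument.
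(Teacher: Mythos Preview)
Your outline is essentially correct and follows the standard route used in Huang--Li--Xin and Cai--Li--L\"u. The paper itself does not give a proof of this lemma: it simply states the estimates and writes ``we only sketch the higher-order estimates in the following lemma, which have been proved in \cite{Cai-Li-Lv2021}.'' So there is no paper-proof to compare against beyond the reference, and your sketch is in line with what that reference does.

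One small correction to your framing: you say the main obstacle is closing the $\sqrt{\rho}\dot u$ estimate on $[0,T]$ without the $\sigma$-weight, and that smallness of $A_1(T)$ is what makes Gronwall close. That is not quite right. Immediately before the lemma the paper explicitly allows the constant $C$ to depend on $T$ (and on $\|g\|_{L^2}$, $\|\nabla u_0\|_{H^1}$, $\|\rho_0\|_{W^{2,q}}$, etc.). So these are \emph{time-dependent} higher-order estimates, and an ordinary Gronwall argument with exponential dependence on $T$ is acceptable. What you actually need is (i) the uniform density bound $\rho\leq 2\tilde\rho$ from Proposition~\ref{prop-a-priori-estimate}, (ii) $\int_0^T\|\nabla u\|_{L^2}^2\leq CE_0$ from \eqref{essential-energy} together with \eqref{Hodge-decomposition3-exterior1}, and (iii) the compatibility condition \eqref{compatibility-condition} to make sense of $\sqrt{\rho}\dot u|_{t=0}$. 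Smallness of $A_1(T)$ is not the mechanism here; it was only needed for the \emph{time-uniform} lower-order estimates in Section~3.1. Apart from this, your bootstrap sequence (compatibility $\Rightarrow$ unweighted $\sqrt{\rho}\dot u$ bound $\Rightarrow$ log-BKM for $\|\nabla u\|_{L^\infty}$ $\Rightarrow$ $\nabla\rho\in L^2\cap L^6$ $\Rightarrow$ higher regularity) is exactly the standard one.
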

Thus, combining Proposition \ref{prop-a-priori-estimate} with the higher-order estimates above as well as the local existence in Lemma \ref{lem-local-sol}, we can prove Theorem \ref{thm-global-CNS-exterior} by similar arguments as in \cite{Cai-Li-Lv2021}. Here, we omit the details for simplicity.

\appendix
\section{The mathematical analysis on three terms about density.}
In this appendix, we will give some mathematical analysis on the precise relationship among $(\rho-\bar{\rho})^2$, $G(\rho)$ and $(P(\rho)-P(\bar{\rho}))(\rho-\bar{\rho})$.

\begin{lem}\label{lem-relationship}
  There exists a clear relationship among $(\rho-\bar{\rho})^2$, $G(\rho)$ and $(P(\rho)-P(\bar{\rho}))(\rho-\bar{\rho})$ for any $\rho\in[0,\tilde{\rho}]$ and $\gamma\in(1,\frac{3}{2}]$. If $\bar{\rho}\ll\tilde{\rho}$, then we obtain
  \begin{equation}\label{relationship}
  \begin{aligned}
  &\frac{P(\bar{\rho})}{\bar{\rho}}(\rho-\bar{\rho})^2\leq(P(\rho)-P(\bar{\rho}))(\rho-\bar{\rho}),\\
  &(\rho-\bar{\rho})^2\leq\frac{1}{C_1}\tilde{\rho}\bar{\rho}^{1-\gamma}G(\rho),\\
  &\bar{\rho}G(\rho)\leq(P(\rho)-P(\bar{\rho}))(\rho-\bar{\rho}),
  \end{aligned}
\end{equation}
wihere constant $C_1>0$ depends only on $a$ and $\frac{\tilde{\rho}}{\bar{\rho}}$. In fact, it suffices to assume $\frac{\tilde{\rho}}{\bar{\rho}}\geq 3$ here.
\end{lem}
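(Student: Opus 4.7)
My plan is to exploit the explicit polytropic form $P(\rho)=a\rho^\gamma$ to derive a closed expression for $G(\rho)$, and then verify each of the three inequalities by elementary one-variable analysis. The common starting point will be the identity
\begin{equation*}
G(\rho) \;=\; \frac{a\rho^\gamma}{\gamma-1}-\frac{a\gamma\bar\rho^{\gamma-1}\rho}{\gamma-1}+a\bar\rho^\gamma,
\end{equation*}
obtained by integrating $s^{\gamma-2}$ and $\bar\rho^\gamma s^{-2}$ in the definition of $G$; this agrees with $G(\bar\rho)=G'(\bar\rho)=0$ and, by direct computation, $G''(\rho)=a\gamma\rho^{\gamma-2}=\gamma P(\rho)/\rho^2$.

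For the first inequality I would introduce the auxiliary function $h(\rho):=\rho^\gamma-\bar\rho^\gamma-\bar\rho^{\gamma-1}(\rho-\bar\rho)$ and prove that $h(\rho)(\rho-\bar\rho)\geq 0$ on $[0,\tilde\rho]$; multiplying by $a$ and rearranging then produces (1). The claim that $h$ and $\rho-\bar\rho$ share their sign is a calculus exercise: one notes $h(0)=h(\bar\rho)=0$ and $h'(\rho)=\gamma\rho^{\gamma-1}-\bar\rho^{\gamma-1}$ has a unique zero at $\rho_{\ast}:=\bar\rho\gamma^{-1/(\gamma-1)}\in(0,\bar\rho)$, so $h\leq 0$ on $[0,\bar\rho]$ (two boundary zeros together with a single interior minimum) and $h\geq 0$ on $[\bar\rho,\infty)$ (by convexity of $s\mapsto s^\gamma$).

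For the second inequality I would use the integral Taylor remainder
\begin{equation*}
G(\rho)=\int_{\bar\rho}^{\rho}(\rho-s)\,G''(s)\,ds,\qquad G''(s)=a\gamma s^{\gamma-2}.
\end{equation*}
Since $\gamma\in(1,3/2]$ makes $s^{\gamma-2}$ decreasing, the crude bounds $s^{\gamma-2}\geq\tilde\rho^{\gamma-2}$ on $[\bar\rho,\rho]\subset[\bar\rho,\tilde\rho]$ (when $\rho\geq\bar\rho$) and $s^{\gamma-2}\geq\bar\rho^{\gamma-2}$ on $[\rho,\bar\rho]$ (when $\rho\leq\bar\rho$) both lead to $(\rho-\bar\rho)^2\leq\frac{2}{a\gamma}\tilde\rho^{2-\gamma}G(\rho)$; the elementary inequality $\tilde\rho^{2-\gamma}\leq\tilde\rho\,\bar\rho^{1-\gamma}$ (immediate from $\gamma>1$ and $\tilde\rho\geq\bar\rho$) then upgrades this to the claim, with $C_1$ of the order of $a/2$.

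The third inequality will be the most delicate step, and is where I expect the main obstacle to lie. Substituting the closed form of $G$ and writing $t:=\rho/\bar\rho$, I would reduce the difference
\begin{equation*}
H(\rho):=(P(\rho)-P(\bar\rho))(\rho-\bar\rho)-\bar\rho\,G(\rho)=\frac{a\bar\rho^{\gamma+1}\,t}{\gamma-1}\bigl[(\gamma-1)t^\gamma-\gamma t^{\gamma-1}+1\bigr]
\end{equation*}
to showing that $\phi(t):=(\gamma-1)t^\gamma-\gamma t^{\gamma-1}+1\geq 0$ on $[0,\tilde\rho/\bar\rho]$. Since $\phi(0)=1$, $\phi(1)=0$, and $\phi'(t)=\gamma(\gamma-1)t^{\gamma-2}(t-1)$ changes sign only at $t=1$ (from negative to positive), $\phi$ attains its global minimum $0$ precisely at $t=1$, giving $H\geq 0$ and hence (3). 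The delicate point is carrying out the algebraic telescoping that isolates the factor $\phi(t)$ without arithmetic slips; essentially the entire content of (3) is packed into that factorization, after which the sign analysis of $\phi$ is a routine calculus exercise. The hypothesis $\tilde\rho/\bar\rho\geq 3$ is not actually exploited in this approach---only $\tilde\rho\geq\bar\rho$ is used---so it plays no active role beyond perhaps sharpening the constant $C_1$ in alternative formulations.
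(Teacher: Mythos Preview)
Your proposal is correct. Each of the three arguments goes through as you describe: the sign analysis of $h(\rho)=\rho^\gamma-\bar\rho^\gamma-\bar\rho^{\gamma-1}(\rho-\bar\rho)$ for the first inequality, the Taylor remainder bound $G(\rho)=\int_{\bar\rho}^\rho(\rho-s)G''(s)\,ds$ with $G''(s)=a\gamma s^{\gamma-2}$ decreasing for the second, and the factorization $H(\rho)=\frac{a\bar\rho^{\gamma+1}t}{\gamma-1}\phi(t)$ with $\phi(t)=(\gamma-1)t^\gamma-\gamma t^{\gamma-1}+1\geq0$ for the third are all valid.

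The paper takes a different, more uniform route: it studies the three ratio functions
\[
f(\rho)=\frac{P(\rho)-P(\bar\rho)}{\rho-\bar\rho},\qquad h(\rho)=\frac{G(\rho)}{(\rho-\bar\rho)^2},\qquad k(\rho)=\frac{(P(\rho)-P(\bar\rho))(\rho-\bar\rho)}{G(\rho)},
\]
and shows by differentiation that $f$ is increasing, $h$ is decreasing, and $k$ is increasing on $[0,\tilde\rho]$; evaluating at the appropriate endpoint ($0$ for $f$ and $k$, $\tilde\rho$ for $h$) gives the three bounds. This has the appeal of a single mechanism, but the monotonicity of $k$ requires a three-layer derivative computation, and the lower bound on $h(\tilde\rho)$ is obtained through an expansion in $B=\tilde\rho/\bar\rho$ that needs $\ln B>1$, whence the hypothesis $B\geq3$ and a constant $C_1=\frac{a(\ln B-1)}{(1-B^{-1})^2}$ genuinely depending on $B$. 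Your Taylor-remainder argument bypasses this entirely and delivers $C_1=a\gamma/2\geq a/2$, independent of $\tilde\rho/\bar\rho$; so your observation that the assumption $\tilde\rho/\bar\rho\geq3$ is inactive is correct, and in fact your version of the second inequality is slightly sharper than what the lemma asserts.
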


\begin{proof}
Due to $\bar{\rho}\ll\tilde{\rho}$, it is clear that $\bar{\rho}$ is much smaller than $\tilde{\rho}$. First, we set
\begin{equation*}
  f(\rho)=\frac{(P(\rho)-P(\bar{\rho}))(\rho-\bar{\rho})}{(\rho-\bar{\rho})^2}=\frac{P(\rho)-P(\bar{\rho})}{\rho-\bar{\rho}},
\end{equation*}
and let
\begin{equation*}
  f(\bar{\rho})=\lim_{\rho\rightarrow\bar{\rho}}f(\rho)=P'(\bar{\rho}).
\end{equation*}
Then, a direct calculation yields that
\begin{equation}\label{f-1rd-derivative}
  f'(\rho)=\frac{P'(\rho)(\rho-\bar{\rho})-(P(\rho)-P(\bar{\rho}))}{(\rho-\bar{\rho})^2}
  =(\rho-\bar{\rho})^{-2}\int_{\bar{\rho}}^{\rho}(P'(\rho)-P'(s))ds\geq0,
\end{equation}
which implies
\begin{equation}\label{f-bound}
  f(\rho)\in[f(0),f(\tilde{\rho})]=[\frac{P(\bar{\rho})}{\bar{\rho}},f(\tilde{\rho})].
\end{equation}

Then, we define
\begin{equation*}
  h(\rho)=\frac{G(\rho)}{(\rho-\bar{\rho})^2},
\end{equation*}
and also set
\begin{equation*}
  h(\bar{\rho})=\lim_{\rho\rightarrow\bar{\rho}}h(\rho)=\frac{1}{2}G''(\bar{\rho})=\frac{P'(\bar{\rho})}{2\bar{\rho}}.
\end{equation*}
A similar calculation gives that
\begin{equation}\label{h-1rd-derivative}
  h'(\rho)=\frac{G'(\rho)(\rho-\bar{\rho})-2G(\rho)}{(\rho-\bar{\rho})^3}=\frac{h_1(\rho)}{(\rho-\bar{\rho})^3},
\end{equation}
It is easy to verify that
\begin{equation*}
  h_1(\bar{\rho})=0,
\end{equation*}
and
\begin{equation}\label{h1-1rd-derivative}
  h_1'(\rho)=G''(\rho)(\rho-\bar{\rho})-G'(\rho)=\int_{\bar{\rho}}^{\rho}(G''(\rho)-G''(s))ds\leq0
\end{equation}
due to the decreasing monotonicity of $G''(\rho)=\frac{P'(\rho)}{\rho}$ for $\gamma\in(1,\frac{3}{2}]$.

Then, it holds that
\begin{equation*}
  h_1(\rho)=\begin{cases}
              >0, & \mbox{if }\rho<\bar{\rho}, \\
              <0, & \mbox{if }\rho>\bar{\rho},
            \end{cases}
\end{equation*}
which implies
\begin{equation*}
  h'(\rho)=\begin{cases}
              <0, & \mbox{if }\rho<\bar{\rho}, \\
              =\frac{1}{6}G'''(\bar{\rho})<0, &\mbox{if }\rho=\bar{\rho},\\
              <0, & \mbox{if }\rho>\bar{\rho}.
            \end{cases}
\end{equation*}
This means
\begin{equation}\label{h-bound1}
  h(\rho)\in[h(\tilde{\rho}),h(0)]=[h(\tilde{\rho}),\frac{P(\bar{\rho})}{\bar{\rho}^2}].
\end{equation}
Next, we turn to estimate $h(\tilde{\rho})$. Since $\bar{\rho}\ll\tilde{\rho}$, it holds that\footnote{The focus of this estimate is at the case $\gamma\rightarrow1$, so we need keep some intrinsical relation unchanged during the calculation as $\gamma$ tending to 1.}
\begin{equation}\label{h-bound-transition1}
  \begin{aligned}
  h(\tilde{\rho})&=\left(1-\frac{\bar{\rho}}{\tilde{\rho}}\right)^{-2}\tilde{\rho}^{-1}a\left(\frac{1}{\gamma-1}(\tilde{\rho}^{\gamma-1}-\bar{\rho}^{\gamma-1})
  +\bar{\rho}^{\gamma}(\tilde{\rho}^{-1}-\bar{\rho}^{-1})\right)\\
  &=a(1-B^{-1})^{-2}\left(\frac{1}{\gamma-1}B^{-(2-\gamma)}-\frac{\gamma}{\gamma-1}B^{-1}+B^{-2}\right)\bar{\rho}^{\gamma-2}\\
  &=a(1-B^{-1})^{-2}\left(\frac{1}{\gamma-1}(B^{\gamma-1}-\gamma)B^{-1}+B^{-2}\right)\bar{\rho}^{\gamma-2}\\
  &\geq a(1-B^{-1})^{-2}\left((\ln B-1)B^{-1}+B^{-2}\right)\bar{\rho}^{\gamma-2}\\
  &\geq\frac{a(\ln B-1)}{(1-B^{-1})^2}B^{-1}\bar{\rho}^{\gamma-2}=\frac{a(\ln B-1)}{(1-B^{-1})^2}\frac{\bar{\rho}^{\gamma-1}}{\tilde{\rho}},
  \end{aligned}
\end{equation}
where $B=\frac{\tilde{\rho}}{\bar{\rho}}\gg1$ (actually it suffices to set $B\geq 3$). Then there exists a constant $C_1>0$ depending only on $a$ and $\frac{\tilde{\rho}}{\bar{\rho}}$, such that
\begin{equation}\label{h-bound2}
  h(\rho)\in[C_1\frac{\bar{\rho}^{\gamma-1}}{\tilde{\rho}},\frac{P(\bar{\rho})}{\bar{\rho}^2}].
\end{equation}

Finally, define
\begin{equation*}
  k(\rho)=\frac{(P(\rho)-P(\bar{\rho}))(\rho-\bar{\rho})}{G(\rho)}=\frac{F(\rho)}{G(\rho)}
\end{equation*}
and also set
\begin{equation*}
  k(\bar{\rho})=\lim_{\rho\rightarrow\bar{\rho}}k(\rho)=\frac{2P'(\bar{\rho})}{P'(\bar{\rho})\bar{\rho}^{-1}}=2\bar{\rho}.
\end{equation*}
Then an analogous computation gives that
\begin{equation}\label{k-1rd-derivative}
  k'(\rho)=\frac{F'(\rho)G(\rho)-F(\rho)G'(\rho)}{G^2(\rho)}=\frac{k_1(\rho)}{G^2(\rho)}.
\end{equation}
It is easy to verify that
\begin{equation*}
  k_1(\bar{\rho})=0,
\end{equation*}
and
\begin{equation}\label{k1-1rd-derivative}
  \begin{aligned}
  k_1'(\rho)&=F''(\rho)G(\rho)-F(\rho)G''(\rho)\\
  &=[P''(\rho)(\rho-\bar{\rho})+2P'(\rho)][\frac{1}{\gamma-1}P(\rho)+P(\bar{\rho})(1-\frac{\gamma}{\gamma-1}\frac{\rho}{\bar{\rho}})]\\
  &\quad-(P(\rho)-P(\bar{\rho}))(\rho-\bar{\rho})\frac{P'(\rho)}{\rho}\\
  &=\frac{\gamma}{\gamma-1}P(\bar{\rho})P''(\rho)(\rho-\bar{\rho})(1-\frac{\rho}{\bar{\rho}})+2P'(\rho)G(\rho)\\
  &=\frac{2P'(\rho)}{\rho}[\rho G(\rho)-\frac{\gamma}{2}\frac{P(\bar{\rho})}{\bar{\rho}}(\rho-\bar{\rho})^2]=\frac{2P'(\rho)}{\rho}k_2(\rho).
  \end{aligned}
\end{equation}
Obviously, $k_2(\rho)$ satisfies that
\begin{equation*}
  k_2(\bar{\rho})=k_2'(\bar{\rho})=k_2''(\bar{\rho})=0,
\end{equation*}
and
\begin{equation}\label{k2-derivative}
  \begin{aligned}
  k_2'(\rho)&=\rho G'(\rho)+G(\rho)-P'(\bar{\rho})(\rho-\bar{\rho}),\\
  k_2''(\rho)&=\rho G''(\rho)+2G'(\rho)-P'(\bar{\rho})=2G'(\rho)+P'(\rho)-P'(\bar{\rho}),\\
  k_2'''(\rho)&=2G''(\rho)+P''(\rho)=2\frac{P'(\rho)}{\rho}+P''(\rho)\geq0.
  \end{aligned}
\end{equation}
This means
\begin{equation*}
  \begin{aligned}
  k_2''(\rho)&=\begin{cases}
                <0, & \mbox{if }\rho<\bar{\rho}, \\
                >0, & \mbox{if }\rho>\bar{\rho},
              \end{cases}\\
  k_2'(\rho)&\geq0,\\
  k_2(\rho)&=\begin{cases}
                <0, & \mbox{if }\rho<\bar{\rho}, \\
                >0, & \mbox{if }\rho>\bar{\rho},
              \end{cases}
  \end{aligned}
\end{equation*}
which implies
\begin{equation*}
  k_1(\rho)\geq 0,\quad k'(\rho)\geq0.
\end{equation*}
Thus, we get the bound of $k(\rho)$ as
\begin{equation}\label{k-bound}
  k(\rho)\in[k(0),k(\tilde{\rho})]=[\bar{\rho},k(\tilde{\rho})].
\end{equation}

Finally, we conclude from \eqref{f-bound}, \eqref{h-bound2} and \eqref{k-bound} that
\begin{equation*}
  \begin{aligned}
  &\frac{P(\bar{\rho})}{\bar{\rho}}(\rho-\bar{\rho})^2\leq(P(\rho)-P(\bar{\rho}))(\rho-\bar{\rho}),\\
  &(\rho-\bar{\rho})^2\leq\frac{1}{C_1}\tilde{\rho}\bar{\rho}^{1-\gamma}G(\rho),\\
  &\bar{\rho}G(\rho)\leq(P(\rho)-P(\bar{\rho}))(\rho-\bar{\rho}).
  \end{aligned}
\end{equation*}
and complete the proof of Lemma \ref{lem-relationship}.
\end{proof}

\begin{rem}\label{rem-relationship}
  Indeed, we can still show the following relationship between $(P(\rho)-P(\bar{\rho}))(\rho-\bar{\rho})$ and $G(\rho)$. As in \eqref{k-bound}, for any $\rho\in[0,\tilde{\rho}]$,
  \begin{equation}\label{k-another-bound1}
    (P(\rho)-P(\bar{\rho}))(\rho-\bar{\rho})\leq k(\tilde{\rho})G(\rho).
  \end{equation}
  Here, we aim to determine the upper bound of $k(\tilde{\rho})$. Similar to \eqref{h-bound-transition1}, we have
  \begin{equation}\label{k-another-bound-transition1}
    \begin{aligned}
    k(\tilde{\rho})&=\frac{\bar{\rho}^{\gamma+1}B^{\gamma+1}(1-B^{-\gamma})(1-B^{-1})}{\bar{\rho}^{\gamma}
    A\left(\frac{1}{\gamma-1}(B^{\gamma-1}-\gamma)+B^{-1}\right)}\\
    &=\frac{\bar{\rho}B^{\gamma}(1-B^{-\gamma})(1-B^{-1})}{\frac{1}{\gamma-1}(B^{\gamma-1}-\gamma)+B^{-1}}\\
    &\leq\frac{\bar{\rho}B^{\gamma}(1-B^{-\gamma})(1-B^{-1})}{\ln B-1+B^{-1}}\\
    &=\frac{\tilde{\rho}B^{\gamma-1}(1-B^{-\gamma})(1-B^{-1})}{\ln B-1+B^{-1}},
    \end{aligned}
  \end{equation}
  where $A=\frac{\tilde{\rho}}{\bar{\rho}}\geq3$ and $\gamma\in(1,\frac{3}{2}]$. However, if choosing $B\gg1$ as $\gamma\rightarrow1$, we can get a better estimate as
  \begin{equation}\label{k-another-bound-transition2}
    \begin{aligned}
    k(\tilde{\rho})&=\frac{\bar{\rho}B^{\gamma}(1-B^{-\gamma})(1-B^{-1})}{\frac{1}{\gamma-1}(B^{\gamma-1}-\gamma)+B^{-1}}\\
    &\leq\frac{\bar{\rho}B^{\gamma}(1-B^{-\gamma})(1-B^{-1})}{\frac{1}{3(\gamma-1)}B^{\gamma-1}}\\
    &\leq3(\gamma-1)\tilde{\rho},
    \end{aligned}
  \end{equation}
  under the condition
  \begin{equation}\label{A-restriction}
    B\geq3^{\frac{1}{\gamma-1}},
  \end{equation}
  which implies
  \begin{equation*}
    B^{\gamma-1}\geq3\geq2\gamma,
  \end{equation*}
  for any $\gamma\in(1,\frac{3}{2}]$.

  Thus, we conclude that if $B=\frac{\tilde{\rho}}{\bar{\rho}}\geq3^{\frac{1}{\gamma-1}}$, then for any $\gamma\in(1,\frac{3}{2}]$,
  \begin{equation}\label{k-another-bound2}
    (P(\rho)-P(\bar{\rho}))(\rho-\bar{\rho})\leq 3\tilde{\rho}(\gamma-1)G(\rho),
  \end{equation}
  and if $B=\frac{\tilde{\rho}}{\bar{\rho}}\in[3,3^{\frac{1}{\gamma-1}}]$ with $\gamma\in(1,\frac{3}{2}]$, it holds from \eqref{k-another-bound-transition1} that
  \begin{equation}\label{k-another-bound3}
    (P(\rho)-P(\bar{\rho}))(\rho-\bar{\rho})\leq \frac{3\tilde{\rho}}{\ln3-1}G(\rho).
  \end{equation}
\end{rem}

\medskip

\section*{\bf Data availability}
No data was used for the research described in the article.

\section*{\bf Conflicts of interest}

The authors declare no conflict of interest.

\end{document}